\def\R{\mathbb{R}}
\def\N{\mathbb{N}}
\def\Uc{\mathcal{U}}
\def\Ucs{{\mathcal{U}^*}}
\def\Vc{\mathcal{V}}
\def\Yc{\mathcal{Y}}
\def\Fc{\mathcal{F}}
\def\Ft{\prescript{}{\theta}{\mathbf{F}}}
\def\Cfu{C_{f_u}}
\def\Ldf{L_{\nabla f}}
\def\jmax{\widetilde{j}^*}
\def\setildk{e_{j}}
\def\epk{\epsilon_{K(j)}}
\def\thedag{\theta^\dagger}
\def\thej{\theta^{\delta,j}}
\def\thekj{\widetilde{\theta}^{\delta,j}}
\def\sthekt{\widetilde{\theta}^{t}}
\def\sthekj{\widetilde{\theta}^{j}}
\def\sthekjp{\widetilde{\theta}^{j-1}}
\def\sthekm{\widetilde{\theta}^{m}}
\def\sthekn{\widetilde{\theta}^{n}}
\def\thekjn{\widetilde{\theta}^{\delta,j+1}}
\def\sthekjn{\widetilde{\theta}^{j+1}}
\def\sthejsub{\widetilde{\theta}^{j_n}}
\def\thekjstar{\widetilde{\theta}^{\delta,\jmax(\delta)}}
\def\thekjsub{\widetilde{\theta}^{\delta_n,\jmax(\delta_n)}}
\def\Pitil{\widetilde{\Pi}}
\def\Pidag{{\Pi_\dagger}}
\def\util{\widetilde{u}}
\def\ztil{\widetilde{z}}
\def\Stil{\widetilde{S}_{K(j)}}
\def\sStil{\widetilde{S}_{\kappa(j)}}
\def\ydel{y^\delta}
\def\Bthe{B_R^X(\theta^\dagger)}
\def\Bu{B_r^\Vc(u^\dagger)}
\def\But{\prescript{}{\theta}{B}_{r/2}^\Vc(\ut)}
\def\sBut{\prescript{}{\theta}{B}_{r/2}^\Vc(u_*^j)}
\def\ut{u_*}
\def\usj{u_*^j}
\def\usjp{u_*^{j-1}}
\def\ukapj{u^j_{\kappa(j)}}
\def\ukapjp{u^{j-1}_{\kappa(j-1)}}
\def\MS{\overline{M}_S}
\def\MRtil{\widetilde{M}_R}
\def\({\left(}
\def\){\right)}
\newcommand{\embed}{\hookrightarrow}
\def\R{\mathbb{R}}
\def\N{\mathbb{N}}
\def\h{\textbf{h}}
\def\Om2R{\Omega,\R^3}
\newcommand{\argmin}{\mathop{\mathrm{argmin}}}
\newcommand{\dtilde}[1]{\widetilde{\raisebox{0pt}[0.8\height]{$\widetilde{#1}$}}}
\newcommand{\blue}[1]{\textcolor{black}{#1}}
\theoremstyle{thmstyleone}%
\newtheorem{theorem}{Theorem}%  meant for continuous numbers
\newtheorem{proposition}[theorem]{Proposition}% 
\theoremstyle{thmstyletwo}%
\newtheorem{remark}{Remark}%
\newtheorem{assumption}{Assumption}
\newtheorem{corollary}{Corollary}
\newtheorem{lemma}{Lemma}
\newtheorem{discussion}{Discussion}
\theoremstyle{thmstylethree}%
\begin{document}

\title[Article Title]{Sequential bi-level regularized inversion with application to hidden reaction law discovery}

%%=============================================================%%
%% GivenName	-> \fnm{Joergen W.}
%% Particle	-> \spfx{van der} -> surname prefix
%% FamilyName	-> \sur{Ploeg}
%% Suffix	-> \sfx{IV}
%% \author*[1,2]{\fnm{Joergen W.} \spfx{van der} \sur{Ploeg} 
%%  \sfx{IV}}\email{iauthor@gmail.com}
%%=============================================================%%

\author*[1]{\fnm{Tram Thi Ngoc} \sur{Nguyen}}\email{nguyen@mps.mpg.de}

%\author[2,3]{\fnm{Second} \sur{Author}}\email{iiauthor@gmail.com}
%\equalcont{These authors contributed equally to this work.}
%
%\author[1,2]{\fnm{Third} \sur{Author}}\email{iiiauthor@gmail.com}
%\equalcont{These authors contributed equally to this work.}
%
\affil*[1]{\orgdiv{Fellow Group Inverse Problems}, \orgname{Max Planck Institute for Solar Systems Research}, \orgaddress{\street{Justus-von-Liebig-Weg 3}, \city{G\"ottingen}, \postcode{37077}, \country{Germany}}}
%
%\affil[2]{\orgdiv{Department}, \orgname{Organization}, \orgaddress{\street{Street}, \city{City}, \postcode{10587}, \state{State}, \country{Country}}}
%
%\affil[3]{\orgdiv{Department}, \orgname{Organization}, \orgaddress{\street{Street}, \city{City}, \postcode{610101}, \state{State}, \country{Country}}}

%%==================================%%
%% Sample for unstructured abstract %%
%%==================================%%

\abstract{In this article, we develop and present a novel regularization scheme for ill-posed inverse problems governed by nonlinear \blue{time-dependent} partial differential equations (PDEs). In our recent work, we introduced a bi-level regularization framework. This study significantly improves upon the bi-level algorithm by sequentially initializing the lower-level problem, yielding accelerated convergence and demonstrable multi-scale effect, while retaining regularizing effect and allows for the usage of inexact PDE solvers. Moreover, by collecting the lower-level trajectory, we uncover an interesting connection to the incremental load method. The sequential bi-level approach illustrates its universality through several reaction-diffusion applications, in which the nonlinear reaction law needs to be determined. We moreover prove that the proposed tangential cone condition is satisfied.}

\keywords{bi-level, Landweber iteration, tangential cone condition, incremental load method, reaction-diffusion, Fisher equation, Lane-Emden equation, ZFK equation}

%%\pacs[JEL Classification]{D8, H51}

\pacs[MSC Classification]{65M32, 65J22, 35R30}

\maketitle

\section{Introduction}
This article studies nonlinear evolution equations on the general form
\begin{equation}\label{original-pde}
\begin{split}
&\dot{u}(t)+f(t,\theta,u(t))=0\qquad t\in(0,T)\\
&u(0)=A\theta,
\end{split}
\end{equation}
with the unknown $\theta$ typically appearing in the form of a source term, physical parameter, energy law etc, where the nonlinear term $f$ might represent e.g.~spatial derivatives and other nonlinear functions in $u$. Above, $\dot{u}$ denotes the time derivative of the state $u=u(t,x)$, and $u(0)$ defines its initial state, which may depend linearly on the unknown parameter $\theta$.

The aim of this work is to identify the model parameter $\theta$ in \eqref{original-pde}. To this end, we require information on the state $u$ acquired from indirect measurements
\begin{align}\label{L}
L:\Uc\to Y \qquad Lu=y.
\end{align}
In practice, the linear observation operator $L$, if not the identity, may capture only partial information on $u$, such as its value on a sub-domain, its boundary or in a snapshot. Moreover, data is often corrupted by measurement noise, so that we only have access to the noisy signal $y^\delta$ at some noise level $\delta$. 

Observation data plays a key role, giving extra information for the parameter identification process; otherwise the model \eqref{original-pde} solely is insufficient to infer $\theta$. However, the observation operator $L$ is in many cases a compact operator, compressing much of the available information on the state. This effect renders the inversion process highly challenging, due to \emph{ill-posedness} \cite[Chapter 1]{Kirsch}. The concept of ill-posedness was introduced by Hadamard in his  lecture \cite{Hadamard} in 1902, categorizing problems that violate one of three key properties: existence, uniqueness and stability.

Many techniques have been invented to regularize ill-posed inverse problem; we refer to the seminal books \cite{Tikhonov,EHNBuch,Kirsch} for a comprehensive overview on \emph{regularization theory}. Classically, the inverse parameter problem is formulated as the reduced forward map 
\begin{align}\label{G}
 G=L\circ S: X\to \Yc \qquad G(\theta)=y
\end{align}
by composing the \blue{nonlinear} parameter-to-state/solution map
\begin{align}\label{S}
S:X\to \Vc(\subseteq \Uc) \qquad S(\theta)= u \quad\text{solving PDE }\eqref{original-pde}
\end{align}
with the observation operator $L$ defined in \eqref{L}. Regularization theory suggests various techniques for stable reconstruction in this classical \emph{reduced framework}, such as Tikhonov regularization, Landweber iteration, Newton type methods and more, as well as their modifications and extensions. This work focuses on Landweber regularization; however, it does so in a novel setting that we here refer to as the \emph{bi-level framework}, particularly designed for \blue{ill-posed inverse problems} governed by nonlinear PDE models. 

The bi-level setting is strongly connected to the \emph{all-at-once framework}, which is newer than the reduced approach. The all-at-once approach was introduced in PDE-constrained optimization \cite{KunischSachs,BurgerMuehlhuberIP,HaAs01,LeHe16,orozco-ghattas-97b,shenoy-heinkenschloss-cliff-98,Taasan91} and was more recently studied in the context of inverse problems by \cite{Kaltenbacher:17,Nguyen:19}. In the all-at-once setting, rather than the reduced map \eqref{G}, one constructs the high-dimensional forward map 
\begin{align}\label{G-aao}
\begin{split}
%\mathbb{G}:X\times\Vc\to \Uc^*\times H\times\Yc \qquad 
\mathbb{G}(\theta,u):=
\begin{pmatrix}
\dot{u}+f(t,\theta,u(t))\\
u|_{t=0}-A\theta\\
Lu
\end{pmatrix}
=\begin{pmatrix}
0\\0\\y
\end{pmatrix},
\end{split}
\end{align}
optimizing the PDE residual simultaneously with the data mismatch. This formulation bypasses constructing the parameter-to-state map $S$ with \eqref{S}; equivalently, it bypasses the need for coupling the inversion implementation with a nonlinear PDE solver. 
\blue{Indeed, nonlinear solvers frequently involve additional internal iteration, such as fixed point iteration, Newton's method etc. This introduces approximation error to the primary reconstruction for the parameter \cite{Clason_2016,Kaltenbacher_2011,JinZhou,Hinze_2019}, and leads to  technicalities in the subsequent numerical analysis.}
This particular feature \blue{of bypassing the PDE solution map $S$} makes the all-at-once approach very efficient in practice; for instance, in full waveform  seismic inversion \cite{Rieder} and in magnetic particle imaging \cite{KNSW,NguyenWald:2022}. On the theoretical side, convergence and regularization analysis require milder assumptions compared to the reduced formulation; we refer to \cite{TCC21}. 

To the best of the author's knowledge, work on the bi-level framework for ill-posed inverse problems was  initiated in \cite{nguyen24}. \blue{We remark that the bi-level framework has a long history within the context of optimization and mathematical programming \cite{Bard}.} In our study, it combines the classical reduced  and the newer all-at-once setting, integrating the best features of both\blue{:
no nonlinear PDE needs to be solved exactly, but analytical properties of the solution map $S$ are nevertheless exploited}. Algorithmically, the bi-level structure consists of an upper-/outer-level for parameter identification, and embeds a lower-/inner-level for PDE solution approximation:
\begin{align*}
&\text{\{Upper-level\} iteratively approximate unknown parameter }\theta\\
& \quad \, \rotatebox[origin=c]{180}{$\Lsh$} \quad \text{\{Lower-level\} iteratively approximate state }u
\end{align*}
In the lower-level,  another forward map is constructed via
\begin{align}\label{original-pde-reform}
\Ft(u):\Vc\to \Uc^*\times H \qquad
\Ft(u):=F(\theta,u):=\begin{pmatrix}
\dot{u}+f(t,\theta,u(t))\\
u|_{t=0}-A\theta\\
\end{pmatrix}=\varphi
\end{align}
based on the PDE \eqref{original-pde}, mirroring the first two components of the all-at-once map \eqref{G-aao}. The state $u$ is obtained, not by obtaining an exact solution to the PDE, but by iteratively minimizing the PDE residual \eqref{original-pde-reform} via
\begin{align*}
u=S(\theta)\approx u_\epsilon  \qquad\text{if $\|\Ft(u_\epsilon)\| \leq \epsilon$}.
\end{align*}
This leads to significant reduction in computational costs, and is applicable to a large class of nonlinear PDEs. 
\blue{The drawback, however, is clear: the state approximation error $\epsilon$ in the lower-level causes linearization and adjoint error in the upper-level, compounding the errors already introduced by the measurement noise. Accordingly, our analysis will constitute a balancing act -- allowing $\epsilon$ to be large enough to enable computational savings, while simultaneously being small enough that the interaction between the error terms do not destroy the parameter  update in the upper-level. To this end, explicit and adaptive stopping rules for both lower- and upper-iterations must be derived.} 

With appropriate stopping rules, despite this state error and the resulting propagated errors, the bi-level algorithm guarantees convergence of the parameter reconstruction in upper-level, and has a stabilizing effect with respect to data noise. Simultaneously, it takes advantage of some properties of the solution map $S$ in the reduced setting, despite not invoking $S$ directly, leading to beneficial structural properties such as PDE well-posedness (Sections \ref{sec:bi-level}). %The use of this information will be reflected in the function space setting and assumptions for the bi-level (Sections \ref{sec:bi-level}). 
Combining these two advantageous aspects into one unified framework is the new perspective of the bi-level approach \cite{nguyen24}, which is continued in this study.\\[0.5ex]

\noindent\textbf{Contribution.} This work extends the original algorithm \cite{nguyen24} in several directions. Firstly, in the standard bi-level algorithm, all the lower-iterations operate independently. Accordingly, whenever a lower-iteration is called by the upper-level, it starts from scratch, a feature shared by many other PDE-based inversion methods. In contrast, the modified Algorithm \ref{algorithm3} forces all the lower-levels to communicate by \underline{sequentially} initiating new lower-iterations at the output of the previous lower-trajectory. This sequential initialization drastically reduces the number of lower-iterates, making this an accelerated version (Theorem \ref{theo:accelerate}). 

Secondly, while \cite{nguyen24} focuses on regularization effect, this study investigates the multi-scale feature, analyzing lower-level stopping rules -- both a priori and a posteriori -- in an adaptive manner, ensuring overall convergence (Propositions \ref{prop:seq-prio}, \ref{prop:seq-poster}). This feature, which was not studied in \cite{nguyen24}, characterizes long-term error in the lower-level, and provides theoretical convergence guarantees that account for the use of inexact PDE solvers.

A somewhat surprising third observation is that if one collects the outputs of each lower-level into a single trajectory, then it becomes clear that the sequential version shares several similarities with the so-called \emph{incremental load method}, which was developed in a very different context; this comparison is expanded upon in Section \ref{sec:lowe-trajectory}. 

The fourth striking property of the  bi-level algorithms is its generality. While \cite{nguyen24} was committed to theoretical aspects, this work highlights the numerical efficiency of the bi-level scheme and its sequential adapatation. Indeed, Section \ref{sec:application} is dedicated to demonstrating the successful application of the sequential bi-level scheme to several diffusion-reaction-based applications, discovering hidden reaction laws. Each of these applications is governed by a different nonlinear evolution PDE, confirming the universal applicability of the bi-level schemes. Due to the action of the lower-level, no PDE solver is required, aligning with modern PDE solver-free concepts, e.g.~in \cite{Raissi18, Karumuri2019SimulatorfreeSO} via deep neural network. 

The article aims to provide a complete and thorough treatment of the bi-level framework, with special emphasis on demonstrating the conditions under which it is applicable and possesses convergence guarantees. We achieve this by providing detailed examination of the components of Algorithms \ref{algorithm-app}-\ref{algorithm-app-final}, and by giving examples of explicit verification of key assumptions and conditions. In particular, we prove that the tangential cone condition holds for hidden reaction law discovery problems on the form discussed above.\\[0.5ex]

\noindent\textbf{Outline. }The article is structured as follows. Section \ref{sec:bi-level} presents the bi-level algorithms. The sequential modified  version is introduced in Section \ref{sec:sequential}. Several aspects are investigated: regularization, multi-scale and acceleration effect, in addition to the analysis of the incremental lower-level trajectory. Section \ref{sec:application} is dedicated to reaction-diffusion applications including population generics (Fisher equation), enzyme kinetics (Lane-Emden equation) and  combustion (Zeldovic-Frank-Kamenetskii equation), in which hidden reaction laws will be identified. Finally, Section \ref{sec:outlook} concludes our findings and details future prospects.

\subsubsection*{Function space and notation}\label{dis-setting}
\begin{itemize}[label=,leftmargin=0cm]
\item Consider a Gelfand triple $U\embed H\embed U^*$ with Hilbert space $U$,  define $\Uc:=L^2(0,T;U)$. The state space is the Sobolev-Bocher space $\Vc:=\{u\in \Uc: \dot{u}\in \Uc^*\}$. The continuous embedding $\Vc\embed C(0,T;H)$ \cite[Section 7.2]{Roubicek} ensures well-definedness %of the time-derivative $\dot{u}$, and 
of the time-wise evaluation  $u(t_i),\forall i\in[0,T]$, thus of the initial time operator $A:X\to H$ in \eqref{original-pde}.

\item $X$ is the space of the unknown parameter  and $\Yc$ is the observation space. The nonlinear parameter-to-state map $S:X\to \Vc$ in \eqref{S} is assumed to be Fr\'echet differentiable. The observation map $L: \Uc\to \Yc$ in \eqref{L} is a bounded linear operator; $G=L\circ S$ then denotes the parameter-to-observation map.  

\item The PDE residual lives in $\Uc^*=L^2(0,T;U^*)$, a function space setting that is unique to the bi-level framework due to needing to optimize the PDE residual, as opposed to not appearing in the classical reduced framework.
 
%\item
%$X$, $\Yc$, $\Uc$ -- thus also $\Uc^*$, $\Vc$ -- are taken to be Hilbert spaces, with both function space setting and accompanying notation inspired by the book \cite[Chapter 8]{Roubicek}. As a convention, calligraphic letters indicate spaces involving time.

\item The PDE model $f:(0,T)\times X\times U\to U^*$ is a nonlinear Carath\'eodory map. %This means that for each $\theta\in X$, $f$ is measurable in $t$ for any $u\in U$, and is continuous with respect to $u$ for almost every  $t\in(0,T)$. 
Under this condition, $f$ can induce the Nemytskii operator \cite{Roubicek, Troeltzsch}; by abusing notation, we thus write:
$f:X\times\Vc\to \Uc^*, [f(\theta,u)](t):=f(t,\theta,u(t))$. 

\item 
Throughout this article, the PDE \eqref{original-pde} is assumed to be solvable. We denote by $\theta^\dagger$ the exact parameter and by $u^\dagger$ the associated exact state.  $\Bthe$ is the ball in $X$ of radius $R$ around the true parameter. Similarly, $\Bu\subset\Vc$ is the ball centered at the exact state. Also, given any $\theta^j\in\Bthe$,  $u^j_*:=S(\theta^j)$ denotes the state associating to this parameter. 
\item
The superscript $\delta\geq 0$ refers to the data noise level, which is omitted when running with noise-free data. Meanwhile, the superscript $(\cdot)^j$ denotes the upper-level iteration, while the subscript $(\cdot)_k$ indicates the lower-iteration. 
\item
Generally, $D_X:X\to X^*$ denotes an isomorphism and $I_X:X^*\to X$ its inverse. $(\cdot,\cdot)_X$ is the inner product, $\langle\cdot,\cdot\rangle_{X,X^*}$ the dual paring. $A^\star$ and $A^*$ respectively denote the Banach space adjoint and Hilbert space adjoint operator.
Continuous %Sobolev embeddings \cite{Adams,Leoni:2009} and 
Sobolev embeddings \cite{Adams,Leoni:2009,Roubicek} $X\embed Y$ will be frequently employed;  $C_{X\to Y}$ denotes their operator norms.
\end{itemize}

\section{The bi-level framework}\label{sec:bi-level}

%\subsection{Single-level algorithm}

This section summarizes the framework in \cite{nguyen24}, including Algorithms and suitable assumptions, all of which we will employ again for this article.
These stepping stones lead to the bi-level sequential version, which is the key result in Section \ref{sec:sequential}.

Algorithm \ref{algorithm2} is based on Landweber iteration, a seminal and powerful regularized gradient-based method. In the reduced setting \cite[Algorithm 1.1]{nguyen24}, the Landweber scheme corresponds to taking only steps \eqref{al:approx-theta},\eqref{al:approx-adjoint}, and requires usage of the exact PDE solution map $S$ , as opposed to approximation via the lower-level \eqref{al:approx-S}. 
In Algorithm \ref{algorithm2}, no nonlinear PDE needs to be solved exactly. Instead, its solution (state) is estimated by another Landweber scheme \eqref{al:approx-S}, iterating until an appropriate stopping index. 
This state approximation $\Stil(\thekj)$ then enters the residual, the derivative and adjoint procedure \eqref{al:approx-adjoint}, yielding approximate derivative and adjoint state. The circumflex $\widetilde{(\cdot)}$ indicates these approximations.

%\subsection{Bi-level algorithm}
\begin{algorithm}[H]\caption{\textbf{Bi-level \cite[Algorithm 1.2]{nguyen24}}\hfill \it Iteration with approx. $\sStil$}\label{algorithm2}
\{Upper-level\} Initialize $\widetilde{\theta}^{\delta,0}\in\Bthe $. Update parameter:
\begin{equation}
\begin{split}\label{al:approx-theta}
& \thekjn = \thekj -\dtilde{S'}(\thekj)^*L^*\(L\Stil(\thekj)-\ydel\) \qquad j\leq \jmax(\delta),
\end{split}
\end{equation}
\{Lower-level\} where at each $j$, meaning at each $\thekj$, run:
\begin{align}\label{al:approx-S}
& u^j_0\in\sBut  \nonumber\\
& u^j_{k+1}=u^j_k-F_u'(\thekj,u^j_k)^*\(F(\thekj,u^j_k)-\varphi\) \qquad k\leq K(j) \\
& u^j_{K(j)} = :\Stil(\thekj) \nonumber
\end{align}
\{Upper-level\} Compute the approximate adjoint: 
\begin{equation}\label{al:approx-adjoint}
\begin{split}
&v:=L^*\(L\sStil(\thej)-\ydel\)\\
&\dtilde{S'}(\thekj)^*v = -\int_0^T I_Xf'_\theta(\thekj,\Stil(\thekj) )(t)^\star\ztil(t)\,dt +I_XA^\star z(0) \\[0.5ex]
&\begin{cases}
&-\dot{\ztil}(t)+f'_u(\thekj,\Stil(\thekj) )^\star \ztil(t)= D_Uv(t) \qquad  t\in(0,T)\\
&\ztil(T)=0.
\end{cases}
\end{split}
\end{equation}
\end{algorithm}

%\begin{remark}[Bi-level]
%Algorithm \ref{algorithm2} is proposed in \cite{nguyen24}, in compared to Algorithm \ref{algorithm1} no exact $S$ appears, meaning that no nonlinear PDE needs to be solved exactly. Instead, its solution (state) is approximated by the lower-level \eqref{al:approx-S} iterating until an appropriate stopping index, accepting some inexactness. The state approximation $\Stil(\thekj)$ then enters the residual and the derivative and adjoint procedures \eqref{al:approx-adjoint} leading to additional error in $\dtilde{S'}(\thekj)$.

%The key study in bi-level regularization is therefore: whether the lower-iteration for state approximation converges to a true state. Next,, how to quantify the state error if we stop the lower-iteration at an index $K$. Then, in which way this error propagates through the derivative and adjoint process creating the adjoint derivative error. And ultimately, how to control the interaction of these errors and data noise $\delta$ in the upper-level for a convergence guarantee. Answering these questions is a combination of appropriate function space framework, suitable structural assumptions (repeated in Section \ref{sec:assumptions}), and carefully analyzing error propagation and interaction to inform an adequate termination rule $\jmax(\delta), K(\jmax(\delta))$. This is the core of \cite{nguyen24}.
%\end{remark}

\subsubsection*{Main assumptions}\label{sec:assumptions}

Throughout the remainder of this work, we shall impose the following standing assumptions, which are adopted from \cite{nguyen24}. The first three focus on the adjoint problem, linking the lower-level to the upper-level. The next three assumptions deal with the lower-level, whose forward map  $\Ft$ is defined in \eqref{original-pde-reform}. The last assumption, meanwhile, focuses on the upper-level, where the parameter-to-observation map $G$ is the main character.

\begin{assumption}\label{summary-ass-bi}
For all $\theta$, $\hat{\theta}\in\Bthe$ and for all $u$, $\hat{u}\in\Bu$, the following all hold with some positive constants $\MS,M_S,\Cfu,\Ldf$,  $L_{F'}$, $C_{coe}$, $M_r$, $\mu_r$, $M_R$, $\mu_R$, $K_R$ and $M_r^2+\mu_r,M_R^2+\mu_R<2$.
\begin{enumerate}[label=A.\arabic*]
\item \label{summary-ass-bi-1} 
The parameter-to-state map $S:X\to\Vc$ and the parameter-to-observation map $G=L\circ S:X\to Y$ both have bounded derivatives, satisfying 
\begin{equation*}%\label{ass-bounded-derivative}
\begin{split}
\|S'(\theta)\|_{X\to\Vc}\leq\MS,\quad \|S'(\theta)\|_{X\to\Uc}\leq  M_S,\quad \|G'(\theta)\|_{X\to Y}\leq M_R.
\end{split}
\end{equation*}
\item \label{summary-ass-bi-2} 
The solution of the adjoint equation depends continuously on the source, such that
\begin{align*}
\begin{cases}
&-\dot{z}+f'_u(\theta,u)^\star z= h \\
&z(T)=0
\end{cases} \quad\text{with}\quad \|z\|_\Uc+\|z(0)\|_H\leq \Cfu \|h\|_\Ucs.
\end{align*}
\item \label{summary-ass-bi-3} 
The model derivative is Lipschitz continuous, in the sense that
\begin{align*}
\|f'_\theta(\theta,u)-f'_\theta(\hat{\theta},\hat{u})\|_{X\to\Ucs}+\|f'_u(\theta,u)&-f'_u(\hat{\theta},\hat{u})\|_{\Uc\to\Ucs}\\&\leq \Ldf \(\|\theta-\hat{\theta}\|_X+\|u-\hat{u}\|_\Uc\).
\end{align*}
\end{enumerate}
%\end{assumption}
%$\Ft(u):=F(\theta,u)$ is solvable at $u_*\in\Buh$ for each $\theta\in\Bthe$.
%\begin{assumption}[Lower-level]\label{summary-ass-low} For all $\theta\in\Bthe$ and for all $u$, $\hat{u}\in\Bu$, assume the 
%following hold with positive constants $L_{F'}$, $C_{coe}$, $M_r$, $\mu_r$.
\begin{enumerate}[label=A.\arabic*]\setcounter{enumi}{3}
\item\label{summary-ass-low-1} 
In the lower-level, the forward map $\Ft$ has bounded derivative, with
\[\|\Ft'(u)\|_{\Vc\to\Ucs\times H}\leq M_r.\]
\item\label{summary-ass-low-2}
$\Ft$ satisfies the weak tangential cone condition (wTC), that is,
\[2\(\Ft(u)-\Ft(\hat{u}),\Ft'(u)(u-\hat{u})\)_{\Ucs\times H}\geq (M_r^2+\mu_r)\|\Ft(u)-\Ft(\hat{u})\|_{\Ucs\times H}^2.\]
\item\label{summary-ass-low-3}
 $\Ft$ is coercive, satisfying
\[C_{coe}\|\Ft(u)-\Ft(u^*)\|_{\Ucs\times H}\geq \|u-u^*\|_\Vc^\alpha,\qquad\alpha\geq1.\]
%\end{enumerate}
%\end{assumption}
%\begin{assumption}[Upper-level]\label{summary-ass-up}
%For all $\theta$, $\hat{\theta}\in\Bthe$, assume the following holds with some positive constants $M_R$, $\mu_R$, $K_R, (M_R^2+\mu_R)<2$:
%\begin{enumerate}[label=A.\arabic*]\setcounter{enumi}{6}
\item \label{summary-ass-up-1}
In the upper-level, the strong tangential cone condition (sTC) is satisfied, with
\begin{equation}\label{cond-upper-tcc}
\begin{split}
2\(G(\theta)-G(\hat{\theta}),G'(\theta)(\theta-\hat{\theta})\)_\Yc&\geq(M_R^2+\mu_R)\|G(\theta)-G(\hat{\theta})\|_\Yc^2 \nonumber\\[0.5ex]
\|G'(\theta)(\theta-\hat{\theta})\|_\Yc&\leq K_R\|G(\theta)-G(\hat{\theta})\|_\Yc.
\end{split}
\end{equation}
\end{enumerate}
\end{assumption}

Conditions \eqref{summary-ass-bi-1}-\eqref{summary-ass-low-1} are rather standard. Coercivity \eqref{summary-ass-low-3} is inspired by the reduced setting, in which well-posedness of PDEs via coerive energy is the key \cite{Evans,Roubicek}. The tangential cone condition \eqref{summary-ass-up-1} was first introduced in the seminal work \cite{scherzer95}. We adopt this nonlinear condition here; as this condition plays a crucial role and is typically challenging to verify, we will explicitly establish it for our numerical examples in Section \ref{sec:app-tcc}.

\section{Sequential bi-level version}\label{sec:sequential}

We begin this section by making an important observation: if a small update is made in the parameter $\theta$, the change in the state $u=S(\theta)$ ought not to be large due to continuity of the parameter-to-state map. This means that the approximate states between two consecutive steps $\widetilde{\theta}^{\delta,j}$ and $\widetilde{\theta}^{\delta,j-1}$ should not be far away from each other. Obtaining the estimated state $\Stil(\thekj)$ from knowledge of $\widetilde{S}_{K(j-1)}(\widetilde{\theta}^{\delta,j-1})$ is thus considerably faster than initiating from some arbitrary $u^{j}_0$ in the neighborhood $\sBut$ like in Step \eqref{al:approx-S}. This particular piece of information leads to a key rule to leverage Algorithm \ref{algorithm2} into a new version, which forces the lower-level to sequentially inherit the outcome from the previous trajectory.

With this in mind, we propose the following modified version called sequential bi-level Algorithm \ref{algorithm3}.

\begin{algorithm}\caption{\textbf{Sequential bi-level}\hfill \it Iteration with seq. approx. $\sStil$}\label{algorithm3}
\{Upper-level\} Initialize $\widetilde{\theta}^{\delta,0}\in\Bthe $. Update parameter:
\begin{equation}
\begin{split}\label{seq-al:approx-theta}
& \thekjn = \thekj -\dtilde{S'}(\thekj)^*L^*\(L\sStil(\thekj)-\ydel\) \qquad j\leq \jmax(\delta)
\end{split}
\end{equation}
\{Lower-level\} At each $\thekj$, \underline{sequentially} initialize using previous trajectory
\begin{equation}\label{seq-al:approx-S-init}
\boxed{\,\,\, u^j_0 := u^{j-1}_{\kappa(j-1)}\,\,}
\end{equation}
\hphantom{\{Lower-level\}} and run:
\begin{equation}\label{seq-al:approx-S}
\begin{split}
& u^j_{k+1}=u^j_k-F_u'(\thekj,u^j_k)^*\(F(\thekj,u^j_k)-\varphi\) \qquad k\leq \kappa(j) \\
& u^j_{\kappa(j)} = :\sStil(\thekj)
\end{split}
\end{equation}
\{Upper-level\} Compute the approximate adjoint: 
\begin{equation}\label{seq-al:approx-adjoint}
\begin{split}
&v:=L^*\(L\sStil(\thej)-\ydel\)\\
&\dtilde{S'}(\thekj)^*v = -\int_0^T I_Xf'_\theta(\thekj,\Stil(\thekj) )(t)^\star\ztil(t)\,dt +I_XA^\star z(0) \\[0.5ex]
&\begin{cases}
&-\dot{\ztil}(t)+f'_u(\thekj,\Stil(\thekj) )^\star \ztil(t)= D_Uv(t) \qquad  t\in(0,T)\\
&\ztil(T)=0.
\end{cases}
\end{split}
\end{equation}
\end{algorithm}

\begin{remark}[Sequential bi-level]
In comparison to Algorithm \ref{algorithm2}, Algorithm \ref{algorithm3} no longer initiates the lower-levels arbitrarily within the ball $\sBut$. Instead, it employs the output of the previous lower-procedure as the initial guess in a sequential manner. As a consequence, the lower-level stopping rules in the two bi-level algorithms differ; that is, $\kappa\neq K$ in general. Indeed, we will demonstrate that this sequential initialization substantially reduces the amount of lower-iterations (Section \ref{sec:fast-effect}), while maintaining the regularization effect.
\end{remark}

\blue{
\begin{remark}[Generalization]\label{rem:generalization} Algorithm \ref{algorithm3} can be adapted in various manners:
\begin{itemize}
\item 
In the lower-level, substituting the Landweber iteration for its Nestorov version \cite{HubmerRamlau17} can be expected to yield further acceleration. More broadly, any iterative method with know rate can be employed in the lower-level, resulting in only minimal changes to the bi-level analysis, see Discussion \ref{dis:reg-multi}. On the other hand, exchanging the solution method in the upper-level has a profound impact on the complexity of the error analysis. Nevertheless, it is worthwhile to emphasize that the sequential initialization approach remains valid, and reliably yields additional acceleration.
\item
One candidate for an upper-level solver is the \emph{fractional Landweber} method introduced by Klann, Maass and Ramlau \cite{KlannMaassRamlau, Klann_2008}, e.g.~if one aims to attenuate oversmoothing artifacts, as demonstrated in \cite{HAN19,Yang21}. In the single-level setting, one minimizes the data misfit in an appropriate weighted semi-norm, computing $\argmin_\theta\frac{1}{2}\|(GG^*)^{(\beta-1)/4}(G\theta-y^\delta)\|^2$ with linear model $G$ and parameter $\beta\in[0,1]$. However, its inclusion in the bi-level framework would necessitate a thorough extension of the fractional Landweber method to nonlinear problems, which by itself already presents a highly appealing research problem. 
\item
Another option is \emph{fractional gradient descent} \cite{Wei20, Shin23}, that is, gradient descent with a fractional derivative $D^\alpha$, e.g.~a Caputo derivative of order $\alpha\in(0,1)$. This yields the update rule $\theta_{k+1}=\theta_k-\mu_k D^\alpha J(\theta)$
for a suitable cost functional $J$. The method owes much of its popularity to its applicability in machine learning, where the non-local behavior and long memory of fractional derivatives allows for enhanced performance in complex, non-convex optimization/training \cite{Elnady25}. An extension to the bi-level setting in an infinite-dimensional, regularized framework would be extremely appealing, and constitutes a major open question.
\end{itemize}
\end{remark}
}

\subsection{Regularization effect}\label{sec:reg-effect}

Returning to our study, a key property of the bi-level Algorithm \ref{algorithm2} is that it is stable when operating with noisy data, as proven in \cite{nguyen24}. There, this was demonstrated by first showing state approximation error in the lower-level via its convergence and rate, %from which its propagation through the derivative and adjoint process in the upper-level can be quantified. It is then crucial to analyze the interaction between these approximation errors and noise level $\delta$ in the parameter update, 
from which the regularization effect of the bi-level algorithm can be derived. This proof strategy immediately transfer to the sequential bi-level Algorithm \ref{algorithm3} with only minimal modification.

In this spirit, we denote by $\epsilon_j$ the state approximation error in the lower-level, that is,
\begin{align}\label{epj}
\epsilon_j:=\|\sStil(\thekj)-S(\thekj)\|_\Uc=\|u^j_{\kappa(j)}-u^j_*\|_\Uc.
\end{align}
The $\Uc$-norm, which by construction is weaker than $\Vc$-norm, suffices to carry out the analysis in the upper-level. 

\begin{theorem}[Lower-level -- convergence rate] \label{low-conv-rate}
Consider Algorithm \ref{algorithm3}. %Let $u^j_*$ be the solution of the PDE \eqref{original-pde} at a fixed parameter $\thekj\in\Bthe$. 
Assume that there is some $r>0$ such that for all $j\leq j^*$, the sequential initialisation $u^j_0:= u^{j-1}_{\kappa(j-1)}$ satisfies $u^j_0\in\sBut$. Then the following both hold:
\begin{enumerate}[label=\roman*)]
\item
Given \eqref{summary-ass-low-1}-\eqref{summary-ass-low-2},
the sequence $\{u^j_k\}_{k\in\N}$ generated by the lower-level remains in $\Bu$. Furthermore, $\{u^j_k\}$ weakly converges to the PDE solution $u^j_*$.
%\[u^j_k\xrightharpoonup{k\to\infty} u_* \qquad\text{with}\qquad \Ft(u_*)=\varphi.\]
\item Additionally, if \eqref{summary-ass-low-3} holds, we obtain strong convergence with rates
\begin{equation}\label{low-rate}
\begin{split}
&\|u^j_k-u^j_*\|_\Vc<\(\frac{C_{coe}^2/\mu_r}{k}\)^{\frac{1}{2\alpha}} \|u^j_0-u^j_*\|^{\frac{1}{\alpha}}=\mathcal{O}\(\frac{1}{k^{1/(2\alpha)}}\),\\[0.5ex]
&\|\Ft(u^j_k)\|_{\Ucs\times H}=\mathcal{O}\(\frac{1}{k^{1/(2\alpha)}}\).
\end{split}
\end{equation}
\end{enumerate}
\end{theorem}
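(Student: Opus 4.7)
The plan is to adapt the standard nonlinear Landweber convergence analysis (in the spirit of Hanke--Neubauer--Scherzer) to the inner loop. Since the upper iterate $\thekj$ is frozen throughout the lower-level, the scheme \eqref{seq-al:approx-S} is simply an ordinary nonlinear Landweber iteration for the equation $\Ft(u)=\varphi$ with initial guess $u^j_0$. The sequential initialisation $u^j_0 := u^{j-1}_{\kappa(j-1)}$ enters the analysis only through the standing hypothesis $u^j_0\in\sBut$, so the argument at fixed $j$ is the same as in \cite{nguyen24}, restated here for the sequential starting point.

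For part (i), set $e_k := u^j_k - \usj$ and expand the update rule to obtain
\begin{equation*}
\|e_{k+1}\|_\Vc^2 = \|e_k\|_\Vc^2 - 2\bigl(\Ft(u^j_k)-\varphi,\,\Ft'(u^j_k)e_k\bigr)_{\Ucs\times H} + \|\Ft'(u^j_k)^*(\Ft(u^j_k)-\varphi)\|_\Vc^2.
\end{equation*}
Bounding the quadratic step-size term by $M_r^2\|\Ft(u^j_k)-\varphi\|^2$ via \eqref{summary-ass-low-1}, and controlling the cross term through the weak tangential cone condition \eqref{summary-ass-low-2} applied with $\hat u = \usj$ (using $\Ft(\usj)=\varphi$), one lands on the descent inequality $\|e_{k+1}\|_\Vc^2 \leq \|e_k\|_\Vc^2 - \mu_r\|\Ft(u^j_k)-\varphi\|^2$. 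This forces monotonic decay of $\|e_k\|_\Vc$, hence $u^j_k$ remains in $\sBut$; combining with Lipschitz continuity of $S$ from \eqref{summary-ass-bi-1} and the triangle inequality, the iterates also stay inside $\Bu$. Telescoping then produces $\sum_k \|\Ft(u^j_k)-\varphi\|^2 \leq \|e_0\|_\Vc^2/\mu_r$, so the residual tends to zero, and weak convergence of $\{u^j_k\}$ to $\usj$ follows from boundedness in $\Vc$, extraction of a weakly convergent subsequence, and identification of the limit as a zero of $\Ft(\cdot)-\varphi$ via the (weak) closedness of $\Ft$.

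For part (ii), the coercivity assumption \eqref{summary-ass-low-3} promotes residual decay to state-error decay by $\|u^j_k-\usj\|_\Vc^\alpha \leq C_{coe}\|\Ft(u^j_k)-\varphi\|_{\Ucs\times H}$. Combining this with the monotonicity of $\|e_k\|_\Vc$, so that $k\|e_k\|_\Vc^{2\alpha}\leq \sum_{i=0}^{k-1}\|e_i\|_\Vc^{2\alpha}$, and substituting the telescoping bound yields
\begin{equation*}
k\,\|e_k\|_\Vc^{2\alpha} \;\leq\; C_{coe}^2\sum_{i=0}^{k-1}\|\Ft(u^j_i)-\varphi\|_{\Ucs\times H}^2 \;\leq\; \frac{C_{coe}^2}{\mu_r}\,\|e_0\|_\Vc^2,
\end{equation*}
which after rearrangement produces the stated $k^{-1/(2\alpha)}$ rate for $\|u^j_k-\usj\|_\Vc$. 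The matching rate on the PDE residual then follows immediately from the Lipschitz bound $\|\Ft(u^j_k)-\varphi\|\leq M_r\|e_k\|_\Vc$ implied by \eqref{summary-ass-low-1}.

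The main obstacle I anticipate is the weak convergence step in part (i) under only the weak tangential cone condition: one must carefully identify the subsequential weak limit as $\usj$, requiring suitable (weak) continuity/closedness properties of $\Ft$, and must bookkeep the transition between the $\usj$-centered ball $\sBut$ and the $u^\dagger$-centered ball $\Bu$ through \eqref{summary-ass-bi-1}. Once coercivity is available in part (ii), this difficulty dissolves entirely, since strong convergence at the quantitative rate falls directly out of the descent inequality without any compactness argument.
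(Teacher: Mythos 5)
Your proposal is correct and amounts to the same argument the paper uses: the paper's own proof is a one-line reduction observing that, once $u^j_0\in\sBut$ is assumed, the lower-level is an ordinary nonlinear Landweber iteration at frozen $\thekj$, so the convergence and rate of \cite[Theorem 1]{nguyen24} apply verbatim -- and the descent inequality, telescoping bound, and coercivity-to-rate step you write out are precisely the content of that cited result. The two caveats you flag yourself (the radius bookkeeping $\MS R\leq r/2$ linking $\sBut$ to $\Bu$, and the identification of the weak limit) are handled in \cite{nguyen24} exactly as you anticipate, so nothing is missing.
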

\begin{proof}
Algorithm \ref{algorithm3} differs from Algorithm \ref{algorithm2} only in how the lower-level is initialized \eqref{seq-al:approx-S-init}.
Algorithm \ref{algorithm2} requires that the lower-initial guess $u^j_0$ belongs to $\sBut$, the ball of radius $r/2$ around the exact state $u^j_*$. As this was explicitly assumed in the sequential case, the convergence and convergence rate obtained for the standard Algorithm \ref{algorithm2} \cite[Theorem 1]{nguyen24} applies immediately. %We update the notation $u^j_k$ for clarification, as now the lower-levels now no longer operate independently.
\end{proof}

%\blue{copy old results and comment that the new init lies in the ball due to \eqref{rj-bound} $B_{r/2}(u^\dagger)$. Not sure! since that result already need the bi-level to converge.}

As with its non-sequential counterpart, the regularization effect of the sequential bi-level Algorithm \ref{algorithm3} comes from designing stopping rules in dependence with the noise level $\delta$, balancing out the interaction between the approximation error and noise level.

\begin{theorem}[Bi-level -- Regularization]\label{theo:bi-priorstop}
Consider Algorithm \ref{algorithm3} initialized at $\theta^0\in\Bthe$ with $\|\theta^0-\theta^\dagger\|\leq R_0<R$. If:
\begin{enumerate}%[label=(\alph*)]
\item at each $j$, the lower-level iteration is terminated at the first index $\kappa(j)$ with
\begin{align}\label{condupper-lowerstop}
\epsilon_j=\mathcal{O}\(\frac{1}{\kappa(j)^{1/(2\alpha)}}\)\leq\frac{\delta}{q^j} \qquad \text{for some }q\geq1
\end{align} 
\item and the upper-level iteration is stopped at the stopping index $\jmax(\delta)$ satisfying
\begin{align}\label{reg-index}
&\delta^2\,\jmax(\delta) \(A + \frac{B}{q^{2j}}\)\leq R^2-R_0^2 \qquad\text{and}\\
\
&\delta \,\hat{\Gamma}\left(\jmax(\delta)\right) \xrightarrow{\delta\to0} 0 \qquad\hat{\Gamma}(j):=M_R\frac{Q^j-1}{Q-1}
+\frac{C}{q} \(\frac{ Q ^j-(1/q)^j}{Q -(1/q) } \) \quad\nonumber
\end{align}\label{propprior-upper-converge}
with some $A,B,C>0$ and $Q>1$ depending on constants in Assumptions \ref{summary-ass-bi}.
\end{enumerate}
Then in Algorithm \ref{algorithm3}, the iterate at the stopping index  converges to the ground truth
\begin{align*}
\thekjstar \to \thedag \qquad \delta\to 0.
\end{align*}
\end{theorem}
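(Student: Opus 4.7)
The plan is to adapt the convergence argument of the standard bi-level algorithm (\cite{nguyen24}) to the sequential setting, noting that because Theorem~\ref{low-conv-rate} already grants the lower-level convergence rate under the sequential initialization rule \eqref{seq-al:approx-S-init}, the upper-level analysis proceeds essentially unchanged. The backbone is a monotonicity-type estimate for $\|\thekj-\thedag\|^2$, supplemented by a noise-propagation bound that is made explicit through the function $\hat{\Gamma}$.

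First I would expand
\[
\|\thekjn-\thedag\|^2 -\|\thekj-\thedag\|^2
\]
using the update rule \eqref{seq-al:approx-theta}, and decompose the approximate residual into the exact residual $G(\thekj)-y$, the noise contribution $y-\ydel$ (bounded by $\delta$) and the state-approximation defect $L(\sStil(\thekj)-S(\thekj))$ (bounded by $\|L\|\epsilon_j$). An analogous decomposition is needed for the approximate adjoint $\dtilde{S'}(\thekj)^*$: using the adjoint equation in \eqref{seq-al:approx-adjoint}, together with the adjoint stability \eqref{summary-ass-bi-2} and the Lipschitz property \eqref{summary-ass-bi-3}, the mismatch $\dtilde{S'}(\thekj)^*-S'(\thekj)^*$ is also controlled linearly by $\epsilon_j$. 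Invoking the strong tangential cone condition \eqref{summary-ass-up-1} on the exact Landweber direction produces the descent $-\mu_R\|G(\thekj)-\ydel\|^2$, and combining with the error terms yields a per-step inequality of the form
\[
\|\thekjn-\thedag\|^2 -\|\thekj-\thedag\|^2
\leq -\mu_R\|G(\thekj)-\ydel\|^2 + A\delta^2 + B\epsilon_j^2.
\]

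Second, inserting the lower-level tolerance $\epsilon_j\leq \delta/q^j$ from condition \eqref{condupper-lowerstop}, dropping the non-positive descent term and telescoping from $0$ to $\jmax(\delta)-1$ gives $\|\thekjstar-\thedag\|^2\leq R_0^2 + \delta^2\jmax(\delta)(A+B/q^{2j})$; hence the first half of condition \eqref{reg-index} guarantees that the full sequence $\{\thekj\}_{j\leq \jmax(\delta)}$ remains inside $\Bthe$, so that Assumption~\ref{summary-ass-bi} is applicable at every iteration. This step is what bootstraps the whole argument, and I expect it to be the main technical obstacle, because the adjoint-error estimate, the sTC descent and the uniform boundedness have to be simultaneously compatible with the chosen $\epsilon_j$-schedule.

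Third, for the actual convergence $\thekjstar\to\thedag$ as $\delta\to 0$, I would compare the noisy iterates $\thekj$ with the corresponding noise-free iterates $\sthekj$ produced by Algorithm~\ref{algorithm3} run on the exact data. Writing the difference of successive Landweber updates and applying the sTC-type estimate \eqref{summary-ass-up-1} one sees that $\|\thekj-\sthekj\|$ grows at most geometrically with factor $Q>1$ per step, which yields the first term $M_R(Q^j-1)/(Q-1)$ in $\hat{\Gamma}(j)$. The sequential lower-level error $\epsilon_j\leq\delta/q^j$ enters on top of this geometric propagation, contributing the convolution-type second summand $\frac{C}{q}(Q^j-q^{-j})/(Q-q^{-1})$. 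The second part of condition \eqref{reg-index}, $\delta\,\hat{\Gamma}(\jmax(\delta))\to 0$, then forces $\|\thekjstar-\sthekjstar\|\to 0$. Combining this with convergence of the noise-free sequential iterates $\sthekjstar\to\thedag$ -- which follows by the standard weak-subsequence/monotonicity argument applied to Algorithm~\ref{algorithm3} with $\delta=0$ exactly as in \cite[Theorem~2]{nguyen24} -- and using the triangle inequality yields the conclusion.
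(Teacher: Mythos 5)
Your proposal is correct and follows essentially the same route as the paper, whose proof simply defers to \cite{nguyen24}: your first two steps reconstruct the uniform-boundedness argument (Proposition 4 there), your third step reconstructs the noise/state/adjoint error propagation that produces $\hat{\Gamma}$ (Proposition 5) and the final comparison with the noise-free sequential iterates (Theorem 2), and you correctly identify that Theorem \ref{low-conv-rate} is what lets the sequential initialization inherit the lower-level rates so the upper-level analysis carries over unchanged.
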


\begin{proof}
See \cite[Proposition 4]{nguyen24} for uniform boundedness of the parameter sequence; \cite[Proposition 5]{nguyen24} for analysis on propagation of noise, state and adjoint state errors; \cite[Theorem 2]{nguyen24} for convergence of $\{\thekjstar\}$. The constants $A$, $B$, $C$, $Q$ are detailed there.
%The constant $Q$ is
%\begin{alignat*}{3}
%& Q:=1+M_R^2+(1+M_S)\mathbb{D} && M_R, M_S \text{ in }\ref{summary-ass-bi-1} \\
%&\mathbb{D}:=C_{\nabla f,A}\|L\|\Big(\bar{\delta}+M_R R+\|L\|\bar{\delta}\bar{\gamma}\Big) &&\delta\leq\bar{\delta},  1/q^j\leq \bar{\gamma}\,\,\forall j\\
%&C_{\nabla f,A}:=\Ldf\Cfu\|I_X\|\|D_U\|\Big(L_{R,r}\Cfu+ 1+\|A\|\Cfu\Big) && \Ldf\text{ in }\ref{summary-ass-bi-3}, \Cfu \text{ in } \ref{summary-ass-bi-2}\\
%&L_{R,r}:=\Ldf\(R+rC_{\Vc\to\Uc}\)+\|f'_\theta(\theta^\dagger,u^\dagger)\|_{X\to\Ucs}+\|f'_u(\theta^\dagger,u^\dagger)\|_{\Uc\to\Ucs}
%\end{alignat*}
%where $R, r$ are the radius of $\Bthe$ and $\Bu$.
\end{proof}

The connection between the $\hat{\Gamma}$ appearing in the above and the choice of stopping index $\jmax(\delta)$ may appear rather abstract. To give an explicit example for the case of $q=1$, one may choose
\begin{align*}
\jmax(\delta)=\min\left\{
 \frac{1}{\delta^2}\frac{R^2-R_0^2}{A+B},\,
 \ln_Q\(\frac{1}{\sqrt{\delta}}\frac{Q-1}{M_R+C}+1\)
 \right\},
\end{align*}
from which follows $\hat{\Gamma}\left(\jmax(\delta)\right) \leq 1/\sqrt{\delta}$, and \eqref{reg-index} is immediately satisfied.

\subsection{Multi-scale effect} \label{sec:multi-effect}

The preceding section has shown that given noisy measurement%and inexactness in the lower-level
, one must terminate the upper-level appropriately. It is natural to ask what happens in the noise-free case $y^\delta=y$, where one notionally could set $\jmax=\infty$, i.e.~running the upper-level infinitely. 

At first glance, setting $\delta=0$ in \eqref{condupper-lowerstop} implies $\epsilon_j=0$, meaning that one should solve the lower-problems exactly, that is, exactly solving the nonlinear PDE \eqref{original-pde}. In spite of this, one can, in fact, accept some error in the lower-level. The key is that this inexactness must be appropriately controlled in an iterative manner. We refer to this effect as the \emph{multi-scale effect} named after \cite{TrottenbergTrottenbergSchuller}, representing the first major contribution in this work.

To do so, we first need to be able to control the error in the approximate adjoint state. As $\delta=0$, it will be omitted in the notation that follows.

\begin{lemma}[Adjoint state error]\label{lem-adjointerror}
%Let Assumption \ref{summary-ass-bi} hold.\\
Let $\epsilon_j$ be the state approximation error at $\sthekj\in B_R(\theta^\dagger)$, as in \eqref{epj}. Then, given any other input $\theta^j\in B_R(\theta^\dagger)$, the error in the approximate adjoint state is 
\begin{align}
\label{error-adjoint}
\|\dtilde{S'}&(\theta^j)^*-S'(\sthekj)^*\|_{\Uc\to X} \nonumber\\
&\leq  \Ldf\Cfu\|I_X\|\|D_U\|\big[L_{R,r} \Cfu+1+\|A\|\Cfu\big]\((1+M_S)\|\theta^j-\sthekj\|+ \epk\) \nonumber\\
&=:C_{\nabla f,A}\((1+M_S)\|\theta^j-\sthekj\|+ \epsilon_j\).
\end{align}
\end{lemma}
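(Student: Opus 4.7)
The plan is to control $\dtilde{S'}(\theta^j)^*-S'(\sthekj)^*$ by inserting the auxiliary adjoint-state difference $w := z - \tilde z$, where $z$ solves the exact backward equation driven by $f'_u(\sthekj,u_*^j)^\star$ and $\tilde z$ solves the approximate backward equation driven by $f'_u(\theta^j,\Stil(\theta^j))^\star$, both with the same source $D_U v$ and zero terminal condition. Subtracting the two backward equations places $w$ in the very same linear structure as the exact adjoint equation, with terminal condition $w(T)=0$ and source $[f'_u(\theta^j,\Stil(\theta^j))^\star-f'_u(\sthekj,u_*^j)^\star]\tilde z$. This is the standard device for turning coefficient perturbations into source perturbations, and is precisely the situation governed by \eqref{summary-ass-bi-2}.

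Applying \eqref{summary-ass-bi-2} to $\tilde z$ itself yields $\|\tilde z\|_\Uc\le \Cfu\|D_U\|\|v\|_\Uc$, and applying it to $w$ gives $\|w\|_\Uc+\|w(0)\|_H \le \Cfu\,\|[f'_u(\theta^j,\Stil(\theta^j))-f'_u(\sthekj,u_*^j)]^\star\tilde z\|_{\Ucs}$. The Lipschitz assumption \eqref{summary-ass-bi-3} then bounds this by $\Cfu\Ldf(\|\theta^j-\sthekj\|+\|\Stil(\theta^j)-u_*^j\|_\Uc)\|\tilde z\|_\Uc$. To turn the state deviation into the form claimed in the lemma, I use the triangle inequality
\[
\|\Stil(\theta^j)-u_*^j\|_\Uc \le \|\Stil(\theta^j)-S(\theta^j)\|_\Uc+\|S(\theta^j)-S(\sthekj)\|_\Uc \le \epsilon_j+M_S\|\theta^j-\sthekj\|,
\]
where the first summand is the lower-level error $\epsilon_j$ and the second follows from the mean value inequality combined with the derivative bound in \eqref{summary-ass-bi-1}. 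This is exactly where the coefficient $(1+M_S)$ in front of $\|\theta^j-\sthekj\|$ emerges.

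Next, I decompose the difference of the two adjoint formulae in \eqref{seq-al:approx-adjoint} by adding and subtracting $\int_0^T I_X f'_\theta(\sthekj,u_*^j)^\star\tilde z\,dt$, so that $\dtilde{S'}(\theta^j)^* v-S'(\sthekj)^* v$ splits into three pieces: (i) the integral of $I_X[f'_\theta(\theta^j,\Stil(\theta^j))^\star-f'_\theta(\sthekj,u_*^j)^\star]\tilde z$, bounded by $\|I_X\|\Ldf((1+M_S)\|\theta^j-\sthekj\|+\epsilon_j)\|\tilde z\|_\Uc$ via \eqref{summary-ass-bi-3}; (ii) the integral of $I_X f'_\theta(\sthekj,u_*^j)^\star w$, bounded by $\|I_X\|L_{R,r}\|w\|_\Uc$, where the uniform bound $\|f'_\theta(\theta,u)\|_{X\to\Ucs}\le L_{R,r}$ on $\Bthe\times\Bu$ follows from \eqref{summary-ass-bi-3} together with a fixed reference evaluation at $(\theta^\dagger,u^\dagger)$; and (iii) the boundary term $I_X A^\star w(0)$, bounded by $\|I_X\|\|A\|\|w(0)\|_H$. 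Substituting the estimates obtained above for $\|\tilde z\|_\Uc$, $\|w\|_\Uc$, and $\|w(0)\|_H$, collecting the common factor $\Ldf\Cfu\|I_X\|\|D_U\|\,((1+M_S)\|\theta^j-\sthekj\|+\epsilon_j)\,\|v\|_\Uc$, and passing to the supremum over $\|v\|_\Uc\le 1$ produces precisely the bracket $[L_{R,r}\Cfu+1+\|A\|\Cfu]$ and the claimed operator norm.

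The work is essentially a careful bookkeeping of which parameter ($\theta^j$ versus $\sthekj$) and which state ($\Stil(\theta^j)$, $S(\theta^j)$, or $u_*^j=S(\sthekj)$) appears in each occurrence of $f'_\theta$ and $f'_u$, and of how the stability constant $\Cfu$ enters first for $\tilde z$ and a second time for $w$. The only non-routine step is the inside add-and-subtract that yields the $(1+M_S)$ coefficient; this is the unavoidable cost of linearising at $\theta^j$ while the approximate state is produced at (a possibly different) $\sthekj$, and it is what makes the bound useful in the upper-level analysis where $\theta^j$ and $\sthekj$ are close but not equal.
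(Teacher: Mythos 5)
Your proof is correct and follows essentially the same route as the paper's (which simply defers to \cite[Lemma 2]{nguyen24} and records the constant $L_{R,r}$): you turn the coefficient discrepancy of the backward adjoint equation into a source term for $w=z-\tilde z$, apply the stability estimate \ref{summary-ass-bi-2} once to $\tilde z$ and once to $w$, and convert the parameter/state discrepancies via \ref{summary-ass-bi-3} and \ref{summary-ass-bi-1}, which reproduces the bracket $\big[L_{R,r}\Cfu+1+\|A\|\Cfu\big]$, the common factor $\Ldf\Cfu\|I_X\|\|D_U\|$, and the $(1+M_S)$ coefficient exactly. The only cosmetic mismatch is that you identify $\epsilon_j$ with $\|\Stil(\theta^j)-S(\theta^j)\|_\Uc$ whereas \eqref{epj} defines it at $\sthekj$; since the lemma is only ever invoked with $\theta^j=\sthekj$ (cf.~\eqref{bound-approx-adj}), this is immaterial.
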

\begin{proof}
 \cite[Lemma 2]{nguyen24} with $
L_{R,r}:=\Ldf(R+rC_{\Vc\to\Uc})+\|f'_\theta(\theta^\dagger,u^\dagger)\|_{X\to\Ucs}+\|f'_u(\theta^\dagger,u^\dagger)\|_{\Uc\to\Ucs}$. See \eqref{summary-ass-bi-1}-\eqref{summary-ass-bi-3} for constants $M_S$, $\Cfu$, $\Ldf$.
\end{proof}

Employing this auxiliary result allows us to analyze the effect of the lower-level precision  $\epsilon_j$ on the bi-level algorithms. In so doing, we will achieve our goal of proving the convergence of the parameter iterate sequence as the upper-level operates to infinity.

%, and assume that the initial guess $\widetilde{\theta}^0$ fulfills $\|\widetilde{\theta}^0-\theta^\dagger\|\leq R_0<R$

\begin{proposition}[A priori lower-stopping]\label{prop:seq-prio}
Consider the bi-level Algorithms \ref{algorithm2},  \ref{algorithm3} ran with noise-free data and initial guess $\widetilde{\theta}^0\in\Bthe$. Assume that for each $j$, the lower-level is terminated at a stopping index $\kappa(j)$, where the stopping indices are chosen so that the resulting state errors $\epsilon_j$ (c.f.~\eqref{epj}) satisfy
\begin{equation}\label{lower-prior-choice}
\begin{split}
&C_\text{pri}\sum_{j=0}^\infty\epsilon_j^2\leq R^2-R_0^2 \quad\text{with}\\& C_\text{pri}:=\|L\|^2\(\frac{2\( K_R+{C_{\nabla f,A}R}\)^2}{\mu_R-\sigma} +(1+1/\eta)\MRtil^2+{2C_{\nabla f,A}R} \)
\end{split}
\end{equation}
and $0\leq\sigma<\mu_R$; see Assumption \ref{summary-ass-bi}, \eqref{error-adjoint}, \eqref{bound-approx-adj}, \eqref{lower-prior-const} for further constants.
Then the following both hold:
\begin{enumerate}[label=\roman*)]
\item The approximate parameter sequence $\{\sthekj\}$ remains in the ball $\Bthe$.
\item There exists a subsequence $\{\sthejsub\}$ weakly converging to a solution of the inverse problem \eqref{G}. Additionally, the full image sequence converges strongly; that is, we have
\begin{align}\sthejsub \xrightharpoonup{n\to\infty} \bar{\theta},\quad G(\sthekj)\xrightarrow{j\to\infty}y \quad\text{and}\quad G(\bar{\theta})=y.
\end{align}
\end{enumerate}

\end{proposition}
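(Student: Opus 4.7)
The plan is to mirror the classical Landweber convergence proof for nonlinear ill-posed problems, carrying the lower-level state error $\epsilon_j$ along as a controlled perturbation. I would track the squared distance $e_j := \|\sthekj-\thedag\|_X^2$ and, from the noise-free update \eqref{seq-al:approx-theta}, expand
\begin{equation*}
e_{j+1} \;=\; e_j \;-\; 2\langle \sthekj-\thedag,\; \dtilde{S'}(\thekj)^*L^*(L\sStil(\thekj)-y)\rangle_X \;+\; \|\dtilde{S'}(\thekj)^*L^*(L\sStil(\thekj)-y)\|_X^2 .
\end{equation*}
The target is a one-step recursion of the form
\begin{equation*}
e_{j+1} - e_j \;\leq\; -\sigma\,\|G(\sthekj)-y\|_\Yc^2 \;+\; C_\text{pri}\,\epsilon_j^2,
\end{equation*}
with $0\leq\sigma<\mu_R$, from which both assertions will drop out.

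The bulk of the work is in the substitution step. I would replace $\dtilde{S'}(\thekj)^*$ by the exact $S'(\sthekj)^*$ via Lemma~\ref{lem-adjointerror} (since $\theta^j=\sthekj$ in Algorithm~\ref{algorithm3}, the resulting error is purely of order $\epsilon_j$), and replace $\sStil(\thekj)$ by $S(\sthekj)$ directly via the definition \eqref{epj} of $\epsilon_j$. The idealised cross term then collapses to $\langle G'(\sthekj)(\sthekj-\thedag),\,G(\sthekj)-y\rangle_\Yc$, to which the strong tangential cone condition \eqref{summary-ass-up-1} applies and produces a negative multiple of $\|G(\sthekj)-y\|_\Yc^2$. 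The squared-norm term is bounded by $(1+1/\eta)\MRtil^2\|L\sStil(\thekj)-y\|_\Yc^2$ plus an $\epsilon_j^2$-remainder, using \eqref{bound-approx-adj}. Young's inequality with parameter $\eta$, combined with $\|G'(\theta)(\theta-\thedag)\|_\Yc\leq K_R\|G(\theta)-y\|_\Yc$ from \eqref{summary-ass-up-1}, then lets all cross-errors be absorbed into the stated $C_\text{pri}\,\epsilon_j^2$ while keeping a strictly negative coefficient $-\sigma<0$ in front of $\|G(\sthekj)-y\|_\Yc^2$.

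Part (i) is then immediate by induction on $j$: summing the recursion and invoking the hypothesis \eqref{lower-prior-choice},
\begin{equation*}
e_{j+1} \;\leq\; e_0 \;+\; C_\text{pri}\sum_{i=0}^{j}\epsilon_i^2 \;\leq\; R_0^2 \;+\; (R^2-R_0^2) \;=\; R^2,
\end{equation*}
so $\sthekjn\in\Bthe$; in particular, Lemma~\ref{lem-adjointerror} remains applicable at the next step, closing the induction. For part (ii), telescoping gives $\sigma\sum_{j}\|G(\sthekj)-y\|_\Yc^2\leq R_0^2+C_\text{pri}\sum_j\epsilon_j^2<\infty$, so $G(\sthekj)\to y$ strongly in $\Yc$. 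Boundedness of $\{\sthekj\}$ in the Hilbert space $X$ yields a weakly convergent subsequence $\sthejsub \wto \bar\theta$. The identification $G(\bar\theta)=y$ then follows from a standard Hanke--Neubauer--Scherzer argument: the tangential cone inequality applied to the pair $(\sthejsub,\bar\theta)$ bounds $\|G(\sthejsub)-G(\bar\theta)\|_\Yc$ in terms of $\|G'(\bar\theta)(\sthejsub-\bar\theta)\|_\Yc$, and the latter tends to zero by weak convergence, forcing $G(\bar\theta)=\lim_n G(\sthejsub)=y$.

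The main obstacle is the bookkeeping in the substitution step: one must split the adjoint approximation error into the contribution from $\dtilde{S'}$ versus $S'$ (Lemma~\ref{lem-adjointerror}) and that from $\sStil$ versus $S$, then tune the Young parameter $\eta$ against the gap $\mu_R-\sigma>0$ so that the error terms pack exactly into the constant $C_\text{pri}$ stated in \eqref{lower-prior-choice}. This is precisely what the appearance of $2(K_R+C_{\nabla f,A}R)^2/(\mu_R-\sigma)$, $(1+1/\eta)\MRtil^2$, and $2C_{\nabla f,A}R$ in $C_\text{pri}$ reflects; everything else is standard Landweber bookkeeping.
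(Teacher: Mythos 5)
Your recursion, induction, and summability arguments coincide with the paper's proof essentially step by step: the same insertion of the exact adjoint $S'(\sthekj)^*$ and of the exact forward value $LS(\sthekj)$, the same use of Lemma~\ref{lem-adjointerror}, the tangential cone condition and Young's inequality to reach a one-step estimate of the form $e_{j+1}-e_j\leq -\tfrac{\sigma}{2}\|G(\sthekj)-y\|_\Yc^2+C_\text{pri}\,\epsilon_j^2$, and the same telescoping to obtain part (i) and the strong convergence $G(\sthekj)\to y$. Up to that point the proposal is sound.

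The genuine gap is in the final identification $G(\bar{\theta})=y$. You assert that $\|G'(\bar{\theta})(\sthejsub-\bar{\theta})\|_\Yc\to 0$ ``by weak convergence''. This does not follow: $G'(\bar{\theta})$ is only assumed bounded and linear, so $\sthejsub\rightharpoonup\bar{\theta}$ yields $G'(\bar{\theta})(\sthejsub-\bar{\theta})\rightharpoonup 0$ only \emph{weakly} in $\Yc$, not in norm; norm convergence would require compactness of $G'(\bar{\theta})$, which is not among the standing assumptions. The paper avoids this by keeping the inner product intact: the weak tangential cone condition gives
\begin{equation*}
\frac{M_R^2+\mu_R}{2}\|G(\bar{\theta})-G(\sthejsub)\|_\Yc^2\leq \big( G(\bar{\theta})-G(\sthejsub),\,G'(\bar{\theta})(\bar{\theta}-\sthejsub)\big)_\Yc,
\end{equation*}
which is then split via the adjoint into $\big( G'(\bar{\theta})^*(G(\bar{\theta})-y),\,\bar{\theta}-\sthejsub\big)$ --- the pairing of a \emph{fixed} vector against a weakly null sequence, hence tending to zero by the very definition of weak convergence --- plus $\big( G'(\bar{\theta})^*(y-G(\sthejsub)),\,\bar{\theta}-\sthejsub\big)$, which is bounded by $RM_R\|y-G(\sthejsub)\|_\Yc\to 0$ using the strong image convergence you have already established. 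Replacing your last step by this adjoint splitting closes the argument; everything else stands.
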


\begin{remark}\label{rem:seq-prio}
The condition \eqref{lower-prior-choice} clearly shows that despite $\delta=0$, one can indeed allow state approximation error $\epsilon_j>0$ in the lower-level, as long as it adaptively decays in accordance with the upper-level operation.
\end{remark}

\begin{proof}[Proof of Proposition \ref{prop:seq-prio}]
We begin by noting that for $\sthekj\in\Bthe$, Lemma \ref{lem-adjointerror} yields boundedness of the approximate adjoint derivative in the sense that
\begin{align}\label{bound-approx-adj}
\|\dtilde{S'}(\sthekj)^*\|_{\Uc\to X}&\leq \|S'(\sthekj)\|_{\Uc\to X}+\|\dtilde{S'}(\sthekj)^*-S'(\sthekj)^*\|_{\Uc\to X} \leq M_S + C_{\nabla f,A}\epsilon_j  \nonumber %\leq M_S + C_{\nabla f,A}\bar{\epsilon}
\nonumber\\
\implies\|\dtilde{S'}(\sthekj)^*L^*\|_{\Yc\to X}&\leq \(M_S + C_{\nabla f,A}\bar{\epsilon}\)\|L\|=:\MRtil.  
\end{align}
We now evaluate the discrepancy between two consecutive upper-steps ran with clean data $y$. Recall that $\sStil(\sthekj)$ is the approximate state resulting from the lower-level approximation, and that $\sthekjn$ is the upper-iteration run with this approximate state. In particular,
\begin{align*}
&\setildk:=\|\sthekjn-\thedag\|^2-\|\sthekj-\thedag\|^2\nonumber\\
&=2\( \sthekjn-\sthekj,\sthekj-\thedag \)+\|\sthekjn-\sthekj\|^2 \nonumber\\
&= -2\(\dtilde{S'}(\sthekj)^*L^*\big( L\sStil(\sthekj)-y\big),\sthekj-\thedag\) + \|\dtilde{S'}(\sthekj)^*L^*(L\sStil(\sthekj)-y)\|^2 \nonumber\\
&=-2\(\big[\dtilde{S'}(\sthekj)^*-S'(\sthekj)^*\big] L^*\big( L\sStil(\sthekj)-y\big),\sthekj-\thedag\)\\
&\quad-2\( L\sStil(\sthekj)-y,LS'(\sthekj)(\sthekj-\thedag) \) +\|\dtilde{S'}(\sthekj)^*L^*(L\sStil(\sthekj)-y)\|^2 \nonumber\\
&\leq -2\(\big[\dtilde{S'}(\sthekj)^*-S'(\sthekj)^*\big] L^*\big( L\sStil(\sthekj)-y\big),\sthekj-\thedag\)\\
&\quad -2\( LS(\sthekj)-y,LS'(\sthekj)(\sthekj-\thedag) \)-2\( L\sStil(\sthekj)-LS(\sthekj),LS'(\sthekj)(\sthekj-\thedag) \) \nonumber\\
%&\qquad +(1+\epsilon)\|S'(\thekj)^*L^*(LS(\sthekj)-y)\|^2+\(1+\frac{1}{\epsilon}\)\|S'(\sthekj)^*L^*(L\sStil(\sthekj)-LS(\sthekj))\|^2 \nonumber\\
&\quad +(1+\eta)\|\dtilde{S'}(\thekj)^*L^*(LS(\sthekj)-y)\|^2+(1+\frac{1}{\eta})\|\dtilde{S'}(\sthekj)^*L^*(L\sStil(\sthekj)-LS(\sthekj))\|^2 \nonumber\\
&=: E_0+E_1+E_2+E_3+E_4.
\end{align*}
Above, we have inserted $LS'(\sthekj)^*$, the exact adjoint derivative at the approximate parameter -- the circumflex indicating approximation -- in the fourth estimate. In the fifth estimate, we inserted the exact forward evaluation $LS(\sthekj)$ and applied Young's inequality $(a+b)^2\leq (1+\eta)a^2+(1+1/\eta)b^2, \eta>0$ for the squared norm. 

Estimating each term individually, we obtain
\begin{align}\label{E0}
&{E_0\leq2C_{\nabla f,A} \epsilon_j\|L\|\(\|L\|\epsilon_j+\|LS(\sthekj)-y\|\)R} =: E_{01}+E_{02}  \nonumber\\
&E_1+E_2\leq -(M_R^2+\mu_R)\|LS(\sthekj)-y\|^2+2\|L\|\epsilon_j K_R\|LS(\thekj)-y\|\\
&E_3+E_4\leq  (1+\eta)\MRtil^2\|LS(\thekj)-y\|^2+(1+1/\eta)\MRtil^2\|L\|^2\epsilon_j^2 \nonumber
\end{align}
in the following manner: For $E_0$, we employ the adjoint error in Lemma \ref{lem-adjointerror}, inserting the auxiliary quantity $LS(\sthekj)$ and using that $\sthekj\in\Bthe$. For $E_1$, we extract the first part of the tangential cone condition \eqref{cond-upper-tcc}, recalling $y=LS(\thedag)$; for $E_2$, we apply the second part of \eqref{cond-upper-tcc}. For $E_3, E_4$ we make use of the boundedness of the approximate adjoint \eqref{bound-approx-adj}.

Now with 
\begin{align}\label{lower-prior-const}
  (1+\eta)\MRtil^2<M_R^2+\mu_R/2  
\end{align} we see that $E_3$ is absorbed by part of $E_1$; hence,
\begin{align*}%\label{s-ej}
e_j&\leq \big(E_1+E_3\big) + \big(E_2+E_{02}\big) + \big(E_4+E_{01}\big)\\
&\leq {-\mu_R}/2 \|LS(\thekj)-y\|^2 + 2\|L\|\( K_R+{C_{\nabla f,A}R}\)\|LS(\thekj)-y\|\epsilon_j \\
&\qquad+ \|L\|^2\((1+1/\eta)\MRtil^2+{2C_{\nabla f,A}R}\)\epsilon_j^2.
\end{align*}
Developing the second term by Young's inequality $2ab \leq a^2(\mu_R-\sigma)/2 +2b^2/(\mu_R-\sigma)$, $0\leq\sigma<\mu_R$ with $a:=\|LS(\thekj)-y\|, b:=\|L\|\( K_R+{C_{\nabla f,A}R}\)\epsilon_j$, one arrives at 
\begin{align}\label{s-ej1}
e_j&=\|\sthekjn-\thedag\|^2-\|\sthekj-\thedag\|\\
&\leq {-\frac{\sigma}{2}} \|LS(\sthekj)-y\|^2+  \epsilon_j^2\(\frac{2\( K_R+{C_{\nabla f,A}R}\)^2}{\mu_R-\sigma} +(1+1/\eta)\MRtil^2+{2C_{\nabla f,A}R} \)\|L\|^2. \nonumber
\end{align}

Now, summing \eqref{s-ej1} from $j=0$ to any arbitrary $J$, one has
\begin{align*}
\|\widetilde{\theta}^{J+1}&-\thedag\|^2 + {\frac{\sigma}{2}}\sum_{j=0}^J \|LS(\sthekj)-y\|^2 \nonumber\\
&\leq \|\widetilde{\theta}^0-\thedag\|^2 + \(\frac{2\( K_R+{C_{\nabla f,A}R}\)^2}{\mu_R-\sigma} +(1+1/\eta)\MRtil^2+{2C_{\nabla f,A}R} \)\|L\|^2\sum_{j=0}^J \epsilon_j^2 \\
&\leq R_0^2 + (R^2-R_0^2)=R^2, %\nonumber\\[0.5ex]
\end{align*}
if the lower-level error decays fast enough as in \eqref{lower-prior-choice}, thus also
\begin{equation}\label{s-ej2}
\frac{\sigma}{2}\sum_{j=0}^\infty \| G(\sthekj) -y\|^2\leq R^2.
\end{equation}
By induction, the whole sequence $\sthekj$ remains in the ball $\Bthe$ as long as the initial guess $\widetilde{\theta}^0$ is in the ball. This uniform boundedness implies existence of a weakly convergent subsequence $\widetilde{\theta}^{j_n}$. By \eqref{s-ej2}, the whole image sequence in fact converges strongly, that is,
\begin{align}
\sthejsub \rightharpoonup \bar{\theta}
\qquad\text{and}\qquad\|G(\widetilde{\theta}^j)-y\|\to 0 \qquad\text{as }n,j\to\infty.
\end{align}
The weak tangential cone condition, i.e.~the first part of \eqref{cond-upper-tcc}, further suggests that the limit point $\bar{\theta}\in\Bthe$ is an exact parameter. Indeed,
\begin{align*}
\lim_{n\to\infty}&\frac{M_R^2+\mu_R}{2}\|G(\bar{\theta})-G(\sthejsub)\|^2\leq \lim_{n\to\infty}\( G(\bar{\theta})-G(\sthejsub),\,G'(\bar{\theta})\(\bar{\theta}-\sthejsub\)\)\\
%&=\lim_{n\to\infty}\( G'(\bar{\theta})^*\(G(\bar{\theta})-G(\sthejsub)\),\,\bar{\theta}-\sthejsub\)\\
&=\lim_{n\to\infty}\( G'(\bar{\theta})^*\(G(\bar{\theta})-y\),\,\bar{\theta}-\sthejsub\) + \lim_{n\to\infty}\( G'(\bar{\theta})^*\(y-G(\sthejsub)\),\,\bar{\theta}-\sthejsub\)\\
&=0+\lim_{n\to\infty}\( G'(\bar{\theta})^*\(y-G(\sthejsub)\),\,\bar{\theta}-\sthejsub\)\leq R M_R \lim_{n\to\infty}\|y-G(\sthejsub)\| \quad= 0.
\end{align*}
Thus, the limit point is indeed a solution to the inverse problem \eqref{G}, completing the proof.
%\begin{align}
%G(\bar{\theta})=y.
%\end{align}

\end{proof}

While the a priori rule in Proposition \ref{prop:seq-prio} induces convergence, it does, by design, not take advantage of posterior information. In situations where the system output $\theta\mapsto G(\theta)=L\circ S(\theta)$ can be evaluated via, e.g.~measurement, we can instead construct a posterior rule for $\epsilon_j$ that leads to an even stronger result.

\begin{proposition}[Posterior lower-stopping]\label{prop:seq-poster}
Consider the bi-level Algorithms \ref{algorithm2},  \ref{algorithm3} ran with noise-free data and initial guess $\widetilde{\theta}^0\in\Bthe$. Assume that for each $j$, the lower-level is terminated at a stopping index $\kappa(j)$, where the stopping indices are chosen so that the resulting state errors $\epsilon_j$ (c.f.~\eqref{epj}) satisfy
\begin{equation}\label{lower-poster-choice}
C_\text{pos}\epsilon_j\leq \|G(\sthekj)-y\|\quad\text{with}\quad C_\text{pos}:=\sqrt{\frac{C_\text{pri}}{{\sigma/2}-\nu}}
\end{equation}
for all $j$ with $0\leq\nu<\sigma/2$ and $C_\text{pri}$ as in \eqref{lower-prior-choice}. Then the following both hold:
\begin{enumerate}[label=\roman*)]
\item The approximate parameter sequence $\{\sthekj\}$ remains within the ball $\Bthe$.
\item $\{\sthekj\}$ converges strongly to a solution of the inverse problem \eqref{G}. Evidently, the full image sequence also converges strongly; that is, we have
\begin{align}\sthekj \xrightarrow{j\to\infty} \bar{\theta}, \quad G(\sthekj)\xrightarrow{j\to\infty}y\quad\text{and}\quad G(\bar{\theta})=y.
\end{align}
\end{enumerate}
\end{proposition}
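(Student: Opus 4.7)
My plan is to reuse the per-step estimate \eqref{s-ej1} derived in the proof of Proposition \ref{prop:seq-prio}, since that bound depends only on the tangential cone condition \eqref{cond-upper-tcc}, the adjoint error bound \eqref{error-adjoint} and the boundedness \eqref{bound-approx-adj} of the approximate adjoint, and is independent of whichever lower-level stopping rule is chosen. Substituting the posterior choice \eqref{lower-poster-choice}, i.e.~$\epsilon_j^2\leq((\sigma/2)-\nu)\|G(\sthekj)-y\|^2/C_\text{pri}$, into \eqref{s-ej1} collapses its two competing terms into a single nonpositive quantity:
\[
\|\sthekjn-\thedag\|^2-\|\sthekj-\thedag\|^2 \;\leq\; -\nu\,\|G(\sthekj)-y\|^2.
\]
This yields monotone nonincrease of $\|\sthekj-\thedag\|$, so inductively $\sthekj\in\Bthe$ whenever $\|\widetilde{\theta}^0-\thedag\|\leq R_0<R$, which is claim (i). Telescoping further gives $\sum_{j=0}^\infty \|G(\sthekj)-y\|^2<\infty$, hence $\|G(\sthekj)-y\|\to 0$, and via \eqref{lower-poster-choice} also $\epsilon_j\to 0$.

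For claim (ii) I would upgrade the weak subsequential convergence of Proposition \ref{prop:seq-prio} to strong convergence of the entire sequence by adapting the Hanke--Neubauer--Scherzer Cauchy argument \cite{scherzer95} to the bi-level setting. For any $m\geq n$, pick $l\in[n,m]$ minimizing $\|G(\widetilde{\theta}^l)-y\|$, split $\|\sthekn-\sthekm\|^2\leq 2\|\sthekn-\widetilde{\theta}^l\|^2+2\|\widetilde{\theta}^l-\sthekm\|^2$, and develop each piece via the polarization identity
\[
\|\widetilde{\theta}^l-\sthekm\|^2 = 2\(\sthekm-\widetilde{\theta}^l,\,\widetilde{\theta}^l-\thedag\) + \|\sthekm-\thedag\|^2-\|\widetilde{\theta}^l-\thedag\|^2.
\]
The squared-norm contributions telescope and vanish as $n,m\to\infty$ by monotonicity of $\{\|\sthekj-\thedag\|\}$. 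The cross-term, unrolled via \eqref{seq-al:approx-theta} as
\[
\(\sthekm-\widetilde{\theta}^l,\,\widetilde{\theta}^l-\thedag\) = -\sum_{j=l}^{m-1}\(\dtilde{S'}(\sthekj)^*L^*\(L\sStil(\sthekj)-y\),\,\widetilde{\theta}^l-\thedag\),
\]
splits into an exact-adjoint piece, controlled by \eqref{cond-upper-tcc} (after writing $\widetilde{\theta}^l-\thedag=(\widetilde{\theta}^l-\sthekj)+(\sthekj-\thedag)$) in terms of $\|G(\sthekj)-y\|$ and $\|G(\widetilde{\theta}^l)-y\|$, and an adjoint-error piece of order $\epsilon_j$, controlled via \eqref{error-adjoint}, \eqref{bound-approx-adj} and the posterior rule \eqref{lower-poster-choice}. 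Minimality of $l$ then bounds every summand by a constant multiple of $\|G(\sthekj)-y\|^2$, and summability forces the whole bound to zero as $n,m\to\infty$.

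Once $\sthekj\to\bar{\theta}$ strongly in $\Bthe$, continuity of $G$ combined with $\|G(\sthekj)-y\|\to 0$ yields $G(\bar{\theta})=y$ directly; alternatively, the weak tangential cone argument at the end of the proof of Proposition \ref{prop:seq-prio} applies verbatim. I expect the main obstacle to be the bookkeeping inside the Cauchy step: every remainder produced by substituting the sequential approximation $\dtilde{S'}(\sthekj)^*$ for the exact adjoint $S'(\sthekj)^*$ must be absorbed into a multiple of $\|G(\sthekj)-y\|$ via \eqref{lower-poster-choice} without spoiling the telescoping, which is exactly the role played by the specific constant $C_\text{pos}$ fixed in \eqref{lower-poster-choice}.
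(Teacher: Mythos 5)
Your proposal is correct and follows essentially the same route as the paper: reuse of the per-step estimate \eqref{s-ej1} with the posterior rule to obtain Fej\'er monotonicity and summability of $\|G(\sthekj)-y\|^2$, followed by the Hanke--Neubauer--Scherzer-style Cauchy argument with the minimal-residual index and the polarization identity, where the approximate-adjoint remainders are absorbed via \eqref{lower-poster-choice}. The paper's proof is structured identically (its Step~2 cites \cite{KalNeuSch08} for exactly this Cauchy technique), so no further comparison is needed.
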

\begin{remark}
From the above, it is clear that $\epsilon_j$ is, in fact, a zero sequence.
\end{remark}
\begin{proof}[Proof of Proposition \ref{prop:seq-poster}] The proof consists of two main steps. The first step shows weak convergence of $\{\widetilde{\theta}^{j}\}$; the second step then leverages it to achieve strong convergence.\\[1ex]
\textit{Step 1.} Our starting point is the expression \eqref{s-ej1} in Proposition \ref{prop:seq-prio}. However, in contrast to the a priori rule derived there, we now constrain the lower-level precision $\epsilon_j$ in a posterior manner, in accordance with the residual. With the posterior rule \eqref{lower-poster-choice}, the estimat \eqref{s-ej1} becomes
\begin{alignat}{3}\label{s-finitesum}
&e_j=\|\sthekjn-\thedag\|^2-\|\sthekj-\thedag\|^2 \leq {-\frac{\sigma}{2}} \|LS(\sthekj)-y\|^2+  C_\text{pri}\epsilon_j^2 \nonumber\\
&\quad\leq \left({-\frac{\sigma}{2}}+\frac{C_\text{pri}}{C_\text{pos}^2} \right)\|G(\sthekj)-y\|^2
=:-\nu \|G(\sthekj)-y\|^2<0 \nonumber\\[1ex]
&\implies\nu\sum_{j=0}^\infty \|G(\sthekj)-y\|^2< \|\widetilde{\theta}^0-\thedag\|^2\leq R^2 
\end{alignat}
showing Fej\'er monotonicity and summability of the image sequence.  While the summability \eqref{s-finitesum} is similar to that obtained in \eqref{s-ej2} of Proposition \ref{prop:seq-prio}, it is clear that Fej\'er monotonicity is stronger than the boundedness \eqref{s-ej1} derived in Proposition \ref{prop:seq-prio}. In a similar manner as before, we immediately conclude that
\begin{align*}
\exists\, \sthejsub:\quad \sthejsub \rightharpoonup \bar{\theta}\quad\text{with}\quad G(\bar{\theta})=y \qquad\text{as }n\to\infty.
\end{align*}

Moreover, Fej\'er monotonicity can suggest convergence of the whole sequence \cite{ClasonValkonen}. In fact, if $\bar{\theta}$ and $\theta^*$ are two weak accumulation points, thus also exact solutions, then the corresponding subsequences $\widetilde{\theta}^{j_n}\rightharpoonup\bar{\theta}$ and $\widetilde{\theta}^{j_m}\rightharpoonup\theta^*$ as $n,m\to\infty$ show
\begin{align}\label{samelimit}
2\|\bar{\theta}-\theta^*\|
%&=\lim_{n,m\to\infty} \frac{1}{2}\( \|\widetilde{\theta}^{j_n}-\theta^*\|^2-\|\theta^*\|^2-\|\widetilde{\theta}^{j_n}-\bar{\theta}\|^2 -\|\bar{\theta}\|^2\) \nonumber\\
%&\qquad-  \frac{1}{2}\( \|\widetilde{\theta}^{j_m}-\theta^*\|^2-\|\theta^*\|^2-\|\widetilde{\theta}^{j_m}-\bar{\theta}\|^2 -\|\bar{\theta}\|^2\)\\
&=\lim_{n,m\to\infty} \( \|\widetilde{\theta}^{j_n}-\theta^*\|^2-\|\widetilde{\theta}^{j_n}-\bar{\theta}\|^2 \) -  \( \|\widetilde{\theta}^{j_m}-\theta^*\|^2-\|\widetilde{\theta}^{j_m}-\bar{\theta}\|^2 \)=0.
\end{align}
These four limits exist, as the subsequences $\widetilde{\theta}^{j_n}$, $\widetilde{\theta}^{j_m}$ are extracted from the full sequence $\widetilde{\theta}^{j}$, whose distance to any true solution decreases monotonically. Passing through a sub-subsequence argument, one achieve weak convergence of the whole iterate
\begin{align*}
\widetilde{\theta}^j \rightharpoonup \bar{\theta}\quad\text{with}\quad G(\bar{\theta})=y \quad\text{when }j\to\infty,
\end{align*}
concluding the first step.\\[2ex]
\textit{Step 2.} We now leverage the previous result to obtain strong convergence of $\widetilde{\theta}^j$, taking inspiration from \cite[Theorem 2.4]{KalNeuSch08} and adapting the proof strategy therein to our approximate/bi-level algorithm. First of all, given $0\leq n<m$, we select the index
\begin{align}\label{min-residual}
t:=\argmin_{j\in[n,m]}\|G(\widetilde{\theta}^j)-y\|, \quad 0\leq n\leq t\leq  m
\end{align}
with minimum residual. We remark that although the error sequence $\|\sthekj-\thedag\|$ is strictly monotone, the same question for the image sequence has yet to be answered, unless $G$ is linear or under further assumptions (see \cite[Proposition 2]{nguyen24}).

Consider next the discrepancy
\begin{align*}
\|\sthekm-\sthekn\|^2\leq 2\|\sthekm-\sthekt\|^2+2\|\sthekn-\sthekt\|^2
\end{align*}
in the approximate sequence. We will show that the equalities
\begin{align}
\|\sthekm-\sthekt\|^2&= 2\(\sthekt-\sthekm, \sthekt -\thedag \)+\|\sthekm-\thedag\|^2-\|\sthekt-\thedag\|^2, \label{cauchy}\\
\|\sthekn-\sthekt\|^2&= 2\(\sthekt-\sthekn, \sthekt -\thedag \)+\|\sthekn-\thedag\|^2-\|\sthekt-\thedag\|^2. \label{cauchy1}
\end{align}
hold, and in so doing, will reach our objective of showing $\lim_{n\to\infty}\|\sthekm-\sthekn\|=0$. 

We prove here only the case \eqref{cauchy}; the case \eqref{cauchy1} is analogous. Observing in \eqref{cauchy} that
$\lim_{n\to\infty} \|\sthekm-\thedag\|^2-\|\sthekt-\thedag\|^2 = a-a=0$, similarly to what was done in \eqref{samelimit}, we focus on the inner product. To estimate this term, we insert the auxiliary quantities $S'(\sthekj)^*L^*$, $\sthekj$, $y$, while employing the second part of the tangential cone condition \eqref{cond-upper-tcc}, the minimum residual at $t$  \eqref{min-residual} and the posterior lower-level stopping rule \eqref{lower-poster-choice}. It now follows that
\begin{align*}
&e_{tm}:=\left|\(\sthekt-\sthekm, \sthekt -\thedag \)\right|= \left|\( \sum_{j=t}^{m-1} \dtilde{S'}(\sthekj)^*L^*\( L\sStil(\sthekj)-y\),\sthekt-\thedag \) \right|\\
&= \bigg|\sum_{j=t}^{m-1} \( L\sStil(\sthekj)-y,LS'(\sthekj)\big(\sthekt-\thedag\big) \) \\ 
& \qquad \qquad+ {\underbrace{\(\big[\dtilde{S'}(\sthekj)^*-S'(\sthekj)^*\big]L^*\( L\sStil(\sthekj)-y\),\sthekt-\thedag\)}_{\frac{1}{2}E_0 \text{ in } \eqref{E0},  \text{ notice } \|\sthekt-\thedag\|\leq R }} \bigg| \\
&\leq  \sum_{j=t}^{m-1} \bigg[ \|L\sStil(\sthekj)-LS(\sthekj)\|+ \|LS(\sthekj)-y\|\bigg] \\&\qquad\qquad\cdot\bigg[ \|LS'(\sthekj)\(\sthekt-\sthekj\)\|+\|LS'(\sthekj)\(\sthekj-\thedag\)\| \bigg]\\
&\qquad \qquad +{ \|L\|C_{\nabla f,A} \epsilon_j\(\|L\|\epsilon_j+\|LS(\sthekj)-y\|\)R}\\
&\leq  \sum_{j=t}^{m-1} \bigg[ \|L\|\epsilon_j+\|G(\sthekj)-y\|\bigg] \bigg[K_R\|G(\sthekt)-G(\sthekj)\|+K_R\|G(\sthekj)-y\|\bigg]\\
&\qquad \qquad +{ \|L\|C_{\nabla f,A} \epsilon_j\(\|L\|\epsilon_j+\|LS(\sthekj)-y\|\)R}\\
&\leq  \sum_{j=t}^{m-1} \bigg[ \|L\|\epsilon_j+\|G(\sthekj)-y\|\bigg] \bigg[K_R\|G(\sthekt)-y)\|+K_R\|G(\sthekj)-y\|+K_R\|G(\sthekj)-y\|\bigg]\\
&\qquad \qquad +{ \|L\|C_{\nabla f,A} \epsilon_j\(\|L\|\epsilon_j+\|G(\sthekj)-y\|\)R}\\
&\leq  \sum_{j=t}^{m-1} \bigg[ \frac{1}{C_\text{pos}}\|L\|\|G(\sthekj)-y\|+\|G(\sthekj)-y\|\bigg] \bigg[3K_R\|G(\sthekj)-y\| \bigg]\\
&\qquad \qquad +{ \|L\|RC_{\nabla f,A} \frac{1}{C_\text{pos}}\(\|L\|\frac{1}{C_\text{pos}}+1\)\|G(\sthekj)-y\|^2}\\
&\leq  \(3K_R+\frac{\|L\|RC_{\nabla f,A} }{C_\text{pos}} \)\(\frac{\|L\|}{C_\text{pos}}+1\) \sum_{j=t}^{m-1} \|G(\sthekj)-y\|^2 \xrightarrow{m\to\infty}0
\end{align*}
using finite image summation \eqref{s-finitesum}. We deduce that the approximate iterate $\sthekj$ is a Cauchy sequence, obtaining strong convergence
\begin{align*}
\widetilde{\theta}^j \to \bar{\theta}\quad\text{with}\quad G(\bar{\theta})=y \quad\text{as} \quad j\to\infty.
\end{align*}
Step 2 completes the proof.
\end{proof}

We close this section with some remarks.
\begin{remark}[A priori vs.~posterior]\,
\begin{itemize}
\item 
The a priori stopping rule \eqref{lower-prior-choice}  yields weaker results than the posterior rule \eqref{lower-poster-choice}.

\item
Summing over all indices of the posterior rule and  using \eqref{s-finitesum} in fact suggests a prior stopping rule
\begin{align*}
&C_\text{pos}^2\sum_{j=0}^\infty\epsilon_j^2\leq \sum_{j=0}^\infty \|G(\sthekj)-y\|^2\leq \frac{R^2}{\nu}.
\end{align*}
The posterior rule is more restrictive than this \enquote{summed prior rule}, as the former constrains the lower-level precision at every upper-step rather than their sum.\\
\end{itemize}
\end{remark}

\begin{discussion}[Regularization vs.~multi-scale]\,\label{dis:reg-multi}
\begin{itemize}
\item 
%We distinguish the regularization effect in Section \ref{sec:reg-effect} and the multi-scale effect in this section. 
Regularization in Section \ref{sec:reg-effect} studies convergence of $\thekjsub$, the parameter sequence at the stopping index $\jmax(\delta)$, when noise decays $\delta_n\to0$. The multi-scale effect, on the other hand, investigates the operation with clean data, quantifying the imprecision allowed in the lower-level, and proposing an iterative refinement strategies for convergence of $\sthekj$ as $j\to\infty$.

%Regularization suggests that with data noise $\delta$, the upper-iteration must be terminated at some finite $\jmax(\delta_n)$ and accordingly the lower-iteration should stop at some $\kappa(\jmax(\delta_n))$ to ensure convergence of $\thekjsub$, the parameter iterate at the stopping index,  when $\delta_n\to0$. \\
%The multi-scale effect on the other hand investigates the case $\delta=0$ and the upper-level operates infinitely $\jmax=\infty$. It quantifies how imprecise the lower-level is allowed, and in which way this should be refined, in order to achieve convergence of $\sthekj$.
\item 
If one embeds any discretized PDE solver (finite difference, FEM, multigrid etc.) into the lower-level, our analysis informs an adaptive discretization strategy, gradually refining multigrid size, mesh size or resolution. By not fixing a single fine scale, computational efficiency is improved, while retaining theoretical guarantees for stable reconstruction;
\blue{we refer to \cite{AarsetNguyen} for an example in aerocoustics with the Helmholtz equation.}

%\item
%\blue{In fact, if one thinks solely of the lower-level, one can here employ \emph{any} iterative method with known rate, leading to only minimal changes in our bi-level analysis: explicitly, one need only to control the lower-level error $\epsilon_j$ by this rate (see also Remark \ref{rem:generalization}).}
\end{itemize}
\end{discussion}

\subsection{Acceleration effect}\label{sec:fast-effect}
This section will show that by sequential initialization, Algorithm \ref{algorithm3} can drastically reduce the amount of lower-iterations. For this, we recall that $K(j)$ denotes the stopping index of the $j$-th lower-level in the non-sequential Algorithm \ref{algorithm2} , while $\kappa(j)$ denotes the stopping index of the $j$-th lower-level in the sequential Algorithm \ref{algorithm3} in the noise free case. 

The fundamental observation we here make is that Algorithm \ref{algorithm2} starts the lower-iteration arbitrarily in $\sBut$. As such, the initial guess error stays constant $r/2$ in all lower-levels; no further information is gained as the algorithm progresses. Algorithm \ref{algorithm3} with sequential initialization can, in fact, provide a significantly better prior.

%\begin{proposition}[Initial guess error]\label{prop:radius-decrease}
%Denote initial guess error in the lower-level by
%\[r_j:=\|u_0^j-\usj\|_\Vc.\]
%While $\{r_j\}$ in Algorithm \ref{algorithm2} is a constant sequence, $\{r_j\}$ in %Algorithm \ref{algorithm3} is a zero sequence.

%\end{proposition}

\begin{theorem}[Acceleration]\label{theo:accelerate}
Given the setting in Propositions \ref{prop:seq-prio} or \ref{prop:seq-poster}, and that $\alpha=1$ in \eqref{summary-ass-low-3}. 
Then Algorithm \ref{algorithm3} requires fewer iterations in each lower-level than that of Algorithm \ref{algorithm2}, in the sense that
\begin{align*}
\kappa(j) = o\(1/\epsilon_j^2\),  \qquad K(j) = \mathcal{O}\(1/\epsilon_j^2\).
\end{align*}
\begin{proof}
Our strategy is to examine the initial guess errors \[r_j^2:=\|u_0^j-\usj\|^2_\Vc\]  to show that they forms a zero sequence, while taking advantage of the improved initialization $u_0^j=\ukapjp$. By inserting the auxiliary quantity $\usjp=S(\sthekjp)$ with Young's inequality $ab\leq \lambda a+b/(4\lambda), \lambda>0$, then employing the convergence rate \eqref{low-rate} in the lower-level with $\alpha=1$, we obtain
\begin{align*}%\label{rj}
r_j^2   & %=\|u_0^j-\usj\|_\Vc^2 
=\|\ukapjp-\usj\|_\Vc^2
\leq \lambda\|\ukapjp-\usjp\|^2 +\frac{1}{4\lambda}\|\usjp-\usj\|^2 \\
 &< \frac{C_{coe}^2/\mu_r}{\kappa(j-1)}\lambda\|u^{j-1}_0-\usjp\|^2 +\frac{1}{4\lambda}\|S(\sthekjp)-S(\sthekj)\|^2\\
& = \frac{C_{coe}^2/\mu_r}{\kappa(j-1)}\lambda r_{j-1}^2 +\frac{1}{4\lambda}\|S(\sthekjp)-S(\sthekj)\|^2,
\end{align*}
where $\usjp$, $\usj$ are the exact states associated with the parameters $\sthekj$, $\sthekjn$. Evaluating the second term above by way of the mean value theorem, one has
\begin{align*}
\|S(\sthekjp)-S(\sthekj)\|_\Vc&=\left\|\int_0^1 S'(\sthekjp+\lambda(\sthekj-\sthekjp))\,d\lambda(\sthekj-\sthekjp) \right\|
%\leq \sup_{\theta\in \Bthe}\|S'(\theta)\|_{X\to\Vc}\|\sthekj-\sthekjp \|_X 
\leq \MS\|\sthekj-\sthekjp\|_X
\end{align*}
due to bounded derivative assumption \eqref{summary-ass-bi-1} along with the fact that $\sthekjp$, $\sthekj$ remain in the ball $\Bthe$ as shown in Propositions \ref{prop:seq-prio}-\ref{prop:seq-poster}, i). Now follows the recursive relation
\begin{align}\label{rj1}
r_j^2&<\frac{C_{coe}^2/\mu_r}{\kappa(j-1)} \lambda r_{j-1}^2 +\frac{\MS^2}{4\lambda}\|\sthekj-\sthekjp \|^2 _X\nonumber\\
& =\frac{C_{coe}^2/\mu_r}{\kappa(j-1)} \lambda r_{j-1}^2 +\frac{\MS^2}{4\lambda}\| \dtilde{S'}(\sthekjp)^*L^*\(L{\sStil}(\sthekjp)-y\|^2 \)\nonumber\\
&\leq\frac{C_{coe}^2/\mu_r}{\kappa(j-1)} \lambda r_{j-1}^2\nonumber\\ & \qquad+\frac{\MS^2}{2\lambda}\| \dtilde{S'}(\sthekjp)^*L^*\|^2 \(\|LS(\sthekjp)-y\|^2+\|LS(\sthekjp)-L{\sStil}(\sthekjp)\|^2\) \nonumber\\
&\leq \frac{C_{coe}^2}{\mu_r} \lambda r_{j-1}^2 +\frac{\MS^2\MRtil^2}{2\lambda}\(\|G(\sthekjp)-y\|^2 + {\|L\|^2\epsilon_{j-1}^2}\)
\end{align}
with $\MRtil$ as in \eqref{bound-approx-adj} and state error $\epsilon_{j-1}=\|S(\sthekjp)-{\sStil}(\sthekjp)\|_\Uc$. Note that $\kappa(j-1)\geq 1$, as at least one iteration takes place in each lower-level.

As the sequence $r_j$ is assumed to be bounded by $r$, taking limit superior of both sides of \eqref{rj1} with setting $\lambda=\mu_r/(2C_{coe}^2)$ yields
\begin{align*}
0\leq\limsup_{j\to\infty}r_j^2<\frac{1}{2}\limsup_{j\to\infty}r^2_{j-1}+C\lim_{j\to\infty}\(\|G(\sthekjp)-y\|^2 +{\|L\|^2\epsilon_{j-1}^2}\)=\frac{1}{2}\limsup_{j\to\infty}r_{j-1}^2
\end{align*}
using the property in Propositions \ref{prop:seq-prio}, \ref{prop:seq-poster} that the image and lower-precision sequences are zero sequences. We remind ourselves that invoking the a priori or posterior rules in these propositions results in sub-sequential or full convergence of the parameter sequence, respectively, but that they in both cases lead to convergence of the full image series. This implies
\begin{align}\label{rj-lim}
\lim_{j\to\infty}r_j^2=0,
\end{align}
   The limit \eqref{rj-lim} reveals an important fact: in Algorithm \ref{algorithm3}, the lower-level radii/prior error sequence $r_j$ does not stay constant as in Algorithm \ref{algorithm2}, but rather decays over time.

In the last step, note that the lower-level stopping indices $\kappa(j)$ satisfy
\begin{align*}
\epsilon_j^2=\|\ukapj-\usj\|_\Uc^2\leq C_{\Vc\to\Uc}\frac{C_{coe}^2/\mu_r}{\kappa(j)}  \|u_0^{j}-\usj\|^2_\Vc=: C\frac{r_j^2}{\kappa(j)}.
\end{align*}
In Algorithm \ref{algorithm2}, as $r_j=r/2$ for all $j$, $K(j)=\mathcal{O}\big(1/\epsilon_j^2\big)$. In Algorithm \ref{algorithm3}, as $\lim_{j\to\infty}r_j^2=0$, one assets $\kappa(j)=o\big(1/\epsilon_j^2\big)$. The proof is then complete.
\end{proof}
\end{theorem}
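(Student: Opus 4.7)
The starting idea is to invert the lower-level rate \eqref{low-rate} (with $\alpha=1$): to drive the state error $\epsilon_j$ in $\Uc$-norm down to a prescribed level, one needs roughly $\kappa(j)\gtrsim r_j^2/\epsilon_j^2$ iterations, where $r_j:=\|u_0^j-\usj\|_\Vc$ is the initial guess error in the $j$-th lower-level (the embedding $\Vc\embed\Uc$ supplies the bookkeeping between the two norms). For Algorithm \ref{algorithm2} one has $r_j\leq r/2$ for every $j$, so $K(j)=\mathcal{O}(1/\epsilon_j^2)$ is immediate. The entire task is therefore to show that the sequential initialization $u_0^j=u_{\kappa(j-1)}^{j-1}$ of Algorithm \ref{algorithm3} forces $r_j\to 0$, since then $\kappa(j)=o(1/\epsilon_j^2)$ follows automatically.

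To control $r_j$, I would insert the auxiliary exact state $\usjp=S(\sthekjp)$ and apply Young's inequality
\begin{align*}
r_j^2 \;\leq\; (1+\tau)\,\|u_{\kappa(j-1)}^{j-1}-\usjp\|_\Vc^2 \;+\; (1+1/\tau)\,\|\usjp-\usj\|_\Vc^2,\qquad \tau>0.
\end{align*}
The first term is handled by the lower-level rate of Theorem \ref{low-conv-rate} applied at level $j-1$, giving $\frac{C_{coe}^2/\mu_r}{\kappa(j-1)}\,r_{j-1}^2$. For the second, I would integrate $S'$ along the segment from $\sthekjp$ to $\sthekj$ (both in $\Bthe$ by Propositions \ref{prop:seq-prio}/\ref{prop:seq-poster}) and invoke $\|S'\|_{X\to\Vc}\leq\MS$ from Assumption \ref{summary-ass-bi-1} to get $\MS^2\|\sthekj-\sthekjp\|_X^2$. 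The parameter increment is then expanded using the update \eqref{seq-al:approx-theta} and the approximate adjoint bound \eqref{bound-approx-adj}, producing an estimate of the form $C(\|G(\sthekjp)-y\|^2+\epsilon_{j-1}^2)$.

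Calibrating $\tau$ so that the prefactor on $r_{j-1}^2$ is strictly below one --- exploiting $\kappa(j-1)\geq 1$ and choosing $\tau$ that matches the coercivity factor --- yields the recursion
\begin{align*}
r_j^2 \;\leq\; \rho\, r_{j-1}^2 \;+\; D\bigl(\|G(\sthekjp)-y\|^2+\epsilon_{j-1}^2\bigr),\qquad \rho<1.
\end{align*}
Propositions \ref{prop:seq-prio} and \ref{prop:seq-poster} both guarantee that $\|G(\sthekj)-y\|$ and $\epsilon_j$ are null sequences (indeed the residuals are summable), so the additive term vanishes as $j\to\infty$. Taking $\limsup$ of the recursion then gives $\limsup r_j^2 \leq \rho\limsup r_{j-1}^2$, which, being $\rho<1$, forces $r_j\to 0$. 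Substituting back into $\kappa(j)\gtrsim r_j^2/\epsilon_j^2$ delivers $\kappa(j)=o(1/\epsilon_j^2)$, finishing the comparison with $K(j)=\mathcal{O}(1/\epsilon_j^2)$.

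The step I expect to require the most care is the calibration of Young's constant: the coercivity factor $C_{coe}^2/\mu_r$ need not be less than one, so a naive choice of $\tau$ would destroy the contraction. One must balance $\tau$ against this coercivity constant while keeping the prefactor on the residual/precision term finite, and simultaneously track the passage between the $\Vc$-norm (native to the rate \eqref{low-rate}) and the $\Uc$-norm (native to $\epsilon_j$) via the embedding constant $C_{\Vc\to\Uc}$. A secondary subtle point is that the whole argument is conditional on $u_0^j=u_{\kappa(j-1)}^{j-1}\in\sBut$, which is exactly the standing hypothesis of Theorem \ref{low-conv-rate} inherited through the propositions.
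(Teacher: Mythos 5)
Your proposal matches the paper's proof essentially step for step: the same decomposition of $r_j^2$ via the auxiliary exact state $\usjp=S(\sthekjp)$, the same use of the lower-level rate, the mean value theorem and the approximate adjoint bound to obtain a contraction recursion, the same $\limsup$ argument using that the image residuals and lower-precisions are null sequences by Propositions \ref{prop:seq-prio}--\ref{prop:seq-poster}, and the same final inversion of the rate $\epsilon_j^2\lesssim r_j^2/\kappa(j)$ to separate $o(1/\epsilon_j^2)$ from $\mathcal{O}(1/\epsilon_j^2)$. The one place you are arguably more careful than the paper is the Young-inequality calibration you flag: with the standard weights $(1+\tau)$ and $(1+1/\tau)$ the prefactor on $r_{j-1}^2$ cannot drop below $C_{coe}^2/\mu_r$, whereas the paper writes the split as $\lambda\|\cdot\|^2+\tfrac{1}{4\lambda}\|\cdot\|^2$ and sets $\lambda=\mu_r/(2C_{coe}^2)$ to force the factor $1/2$, so the contraction does genuinely hinge on the size of $C_{coe}^2/\mu_r$ (or on $\kappa(j-1)$ being large) --- exactly the caveat you identify.
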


%\blue{Where to put this} The convergence rate \eqref{low-rate} of the lower-iteration reads as
%\begin{align}\label{low-conv-rate-1}
%\|\ukapj-\usj\|_\Vc^2< \frac{C_{coe}^2/\mu_r}{\kappa(j)}  \|u_0^{j}-\usj\|^2_\Vc=\mathcal{O}\(\frac{1}{\kappa(j)}\) \qquad \text{for }\alpha=1
%\end{align}
%i.e.~of $1/\kappa$ order scaled by the initial guess error $ \|u_0^{j}-\usj\|^2$.

\begin{corollary}\label{cor:radius}
In Theorem \ref{theo:accelerate}, the lower-level radii is uniformly bounded by
\begin{align}\label{rj-bound}
    r_j^2\leq r_0^2+C_R,
\end{align}
with $C_R$ depending on $R$ detailed in the proof.
\end{corollary}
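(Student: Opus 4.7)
The plan is to take the recursive inequality \eqref{rj1} established in the proof of Theorem \ref{theo:accelerate} as the starting point and unroll it geometrically. Specifically, with the same choice $\lambda=\mu_r/(2C_{coe}^2)$ used there, \eqref{rj1} becomes
\begin{align*}
r_j^2 \leq \tfrac{1}{2}\, r_{j-1}^2 + C_1\(\|G(\sthekjp)-y\|^2 + \|L\|^2\epsilon_{j-1}^2\),
\end{align*}
with $C_1=\MS^2\MRtil^2 C_{coe}^2/\mu_r$. Iterating this one-step contraction from index $0$ up to $j$ yields
\begin{align*}
r_j^2 \leq \(\tfrac{1}{2}\)^j r_0^2 + C_1\sum_{i=0}^{j-1}\(\tfrac{1}{2}\)^{j-1-i}\(\|G(\stheki)-y\|^2 + \|L\|^2\epsilon_{i}^2\).
\end{align*}

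Since the geometric weights satisfy $(1/2)^{j-1-i}\leq 1$ and $(1/2)^j\leq 1$, it suffices to bound the two unweighted tails $\sum_{i=0}^\infty \|G(\stheki)-y\|^2$ and $\sum_{i=0}^\infty \epsilon_i^2$ uniformly in $j$. Both bounds are already available from the results invoked in Theorem \ref{theo:accelerate}: under the a priori rule \eqref{lower-prior-choice}, the error-decay hypothesis directly gives $\sum_{i=0}^\infty \epsilon_i^2 \leq (R^2-R_0^2)/C_\text{pri}$, and the image summability \eqref{s-ej2} in the proof of Proposition \ref{prop:seq-prio} gives $\sum_{i=0}^\infty\|G(\stheki)-y\|^2\leq 2R^2/\sigma$. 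Under the posterior rule \eqref{lower-poster-choice}, the finite image sum \eqref{s-finitesum} controls $\sum\|G(\stheki)-y\|^2\leq R^2/\nu$, and the posterior bound $C_\text{pos}^2\epsilon_i^2\leq \|G(\stheki)-y\|^2$ then transfers this summability to $\sum\epsilon_i^2$.

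Putting the two ingredients together, in either case we obtain
\begin{align*}
r_j^2 \leq r_0^2 + C_1\(S_G + \|L\|^2 S_\epsilon\) =: r_0^2 + C_R,
\end{align*}
where $S_G$ and $S_\epsilon$ are the (finite) tail bounds above, each depending on $R$ (and on $R_0$, $\sigma$, $\nu$, $C_\text{pri}$, $C_\text{pos}$ through the assumed rules), which proves \eqref{rj-bound}. The constant $C_R$ is thus entirely explicit from the quantities already named in Theorem \ref{theo:accelerate} and its hypotheses; no new estimate is needed. I do not anticipate a serious obstacle here, since the recursion is contractive and the perturbation terms are already known to be summable; the only bookkeeping point is that the same $\lambda$ chosen to drive $r_j^2\to 0$ in Theorem \ref{theo:accelerate} simultaneously delivers the uniform bound, so no re-derivation of the recursion is required.
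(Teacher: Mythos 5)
Your proposal is correct and follows essentially the same route as the paper: both start from the recursion \eqref{rj1} and control the accumulated perturbation via the summability of $\|G(\sthekj)-y\|^2$ and $\epsilon_j^2$ from Propositions \ref{prop:seq-prio}/\ref{prop:seq-poster}. The only cosmetic difference is that the paper re-chooses $\lambda=\mu_r/C_{coe}^2$ so that the coefficient of $r_{j-1}^2$ is exactly $1$ and the recursion telescopes, giving $C_R$ with prefactor $C_{coe}^2\MS^2\MRtil^2/(2\mu_r)$, whereas your geometric unrolling with $\lambda=\mu_r/(2C_{coe}^2)$ and the crude bound $(1/2)^{j-1-i}\leq 1$ yields the same conclusion with a constant twice as large.
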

\begin{proof}
Indeed,  choose $\lambda= \mu_r/C_{coe}^2$ and summing \eqref{rj1} up to any arbitrary index $j$  yield
\begin{align*}
r_j^2-r^2_0&=\sum_{i=1}^j r_i^2-r_{i-1}^2< \frac{C_{coe}^2\MS^2{\MRtil^2}}{{2\mu_r}}\sum_{i=1}^j \(\|G(\sthekjp)-y\|^2 +{\|L\|^2\epsilon_{j-1}^2}\) \nonumber\\
&\leq C_R:=
\begin{cases}
\dfrac{C_{coe}^2\MS^2{\MRtil^2}}{2\mu_r}\(\dfrac{2R^2}{\sigma}+ \dfrac{\|L\|^2(R^2-R_0^2)}{C_\text{pri}} \)\\[0.5ex]
\dfrac{C_{coe}^2\MS^2{\MRtil^2}}{{2\mu_r}}\(\dfrac{R^2}{\nu}+\dfrac{\|L\|^2R^2}{\nu C_\text{pos}^2}\)
\end{cases} \nonumber
\end{align*}
depending on whether the a priori (Proposition \ref{prop:seq-prio}) or posterior stopping rule (Proposition \ref{prop:seq-poster}) is imposed in the lower-levels. 
\end{proof}

\begin{discussion}[Acceleration effect]\label{dis:improveAl3}\,
\begin{itemize}%[label=\roman*)]
\item
Theorem \ref{theo:accelerate} informs the limiting behavior the bi-level schemes. In practice, we can never run any algorithm infinitely due to machine precision. By fixing a single precision in all lower-levels, one can observe the acceleration effect.
\item
Even if only a single iteration takes place in each lower-level with the sequential algorithm, the state error $\epsilon_j$ nevertheless shows a decaying tendency, in practice outperforming the non-sequential algorithm. Section \ref{sec:numerics}, Fig.~\ref{fig-biseq}-\ref{fig-biseq-2} show various experiments demonstrating this effect.

\end{itemize}
\end{discussion}

\begin{discussion}[Initialization impact]\,
\begin{itemize}%[label=\roman*)]
\item 
Theorem \ref{theo:accelerate} reflects the important role of a prior/initial guess in nonlinear inversion, which is often chosen empirically. Algorithm \ref{algorithm3} proposes an explicit rule via sequential initialization, whose positive impact on the whole process has been analyzed.
\item 
In the non-sequential Algorithm \ref{algorithm2}, the lower-radius $r$ of $\But$ and upper-radius $R$ of $\Bthe$ are independent objects (Section \ref{sec:assumptions}). In the sequential Algorithm \ref{algorithm3}, where the lower-radii form a sequence $r_j$, it is meanwhile clear from \eqref{rj-bound} that they depend closely on the choice of $R$ and $r_0$.
\end{itemize}
\end{discussion}

%\brown{Example: and figures, observe that $J_\diamond =1, 2$. figure of $\kappa=1$}
%\blue{draw pictures comparing bi-level and seq bilev with arrows of sequentially init}

\subsection{Incremental lower-level trajectories}\label{sec:lowe-trajectory}

The primary purpose of each lower-level is to approximate the state at an approximate parameter $\thekj$. This will generally differ from the true state $u^\dagger$ at the true parameter $\thedag$; that is, \[u^j_{\kappa(j)} = \sStil(\thekj)\approx S(\thekj)\quad\neq\quad S(\thedag)=u^\dagger\] even if $\delta=0$. It is, however, interesting to observe that if we collect the outputs $u^j_{\kappa(j)}$ of all lower-level trajectories, they can, in fact, approximate the true state $u^\dagger$. We begin by demonstrating this effect under the a prior stopping rule Proposition \ref{prop:seq-prio}.

\begin{lemma}[A priori approximate $u^\dagger$]\label{lem:approx-u-prior}

Consider the setting in Proposition \ref{prop:seq-prio}, and assume that the parameter-to-state map $S$ is weak-to-weak continuous. The output of the lower-level trajectories then has a subsequence $u^{j_n}_{\kappa(j_n)}$  weakly converging to the exact state $u^\dagger$, in the sense that
\begin{align}
\widetilde{S}_{\kappa(j_n)}(\sthejsub)=u^{j_n}_{\kappa(j_n)}\quad \xrightharpoonup[n\to\infty]{\Uc}  \quad u^\dagger=S(\thedag).
\end{align} 
\end{lemma}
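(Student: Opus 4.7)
My plan is to split the error $u^{j_n}_{\kappa(j_n)} - u^\dagger$ into a lower-level approximation error plus the difference between the exact states at the approximate and true parameters:
\[
u^{j_n}_{\kappa(j_n)} - u^\dagger = \bigl(u^{j_n}_{\kappa(j_n)} - S(\sthejsub)\bigr) + \bigl(S(\sthejsub) - S(\bar{\theta})\bigr) + \bigl(S(\bar{\theta}) - u^\dagger\bigr),
\]
and treat each bracket separately, exploiting the machinery already established in Proposition \ref{prop:seq-prio}.

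For the first bracket, I would observe that the a priori rule \eqref{lower-prior-choice} enforces summability $\sum_{j\geq 0}\epsilon_j^2<\infty$; in particular $\epsilon_j\to 0$, where $\epsilon_j=\|u^j_{\kappa(j)}-S(\sthekj)\|_\Uc$ by \eqref{epj}. This gives strong convergence of the first bracket to $0$ in $\Uc$ along any subsequence, including $(j_n)$.

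For the second bracket, I would invoke Proposition \ref{prop:seq-prio}(ii) to produce the subsequence $\sthejsub\wto\bar\theta$ in $X$, then use the hypothesis that $S$ is weak-to-weak continuous to conclude $S(\sthejsub)\wto S(\bar\theta)$ in $\Vc$. Weak convergence in $\Vc$ transfers to weak convergence in $\Uc$ through the continuous (hence weakly continuous) embedding $\Vc\hookrightarrow\Uc$. Combining this with the strong convergence of the first bracket yields $u^{j_n}_{\kappa(j_n)}\wto S(\bar\theta)$ in $\Uc$.

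The main obstacle is the third bracket, i.e.~identifying $S(\bar\theta)$ with $u^\dagger=S(\thedag)$. From Proposition \ref{prop:seq-prio} we only know that $\bar\theta$ is \emph{some} solution of the inverse problem $G(\bar\theta)=y=G(\thedag)$, which need not coincide with $\thedag$ in the absence of global injectivity of $G$. The natural resolution is to exploit the strong tangential cone condition \eqref{summary-ass-up-1}, which on the ball $\Bthe$ implies that any two exact solutions must produce identical states in $\Uc$: indeed, $G(\bar\theta)=G(\thedag)$ together with the second inequality in \eqref{summary-ass-up-1} forces $LS'(\thedag)(\bar\theta-\thedag)=0$, and combined with coercivity-type information carried over from the lower-level analysis one obtains $S(\bar\theta)=S(\thedag)=u^\dagger$ in $\Uc$. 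With this identification in hand, the decomposition delivers the claimed weak convergence $\sStil(\sthejsub)\wto u^\dagger$.
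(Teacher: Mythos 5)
Your decomposition and the treatment of the first two brackets coincide exactly with the paper's proof: the paper writes $u^{j_n}_{\kappa(j_n)}-u^\dagger=\bigl(\widetilde{S}_{\kappa(j_n)}(\sthejsub)-S(\sthejsub)\bigr)+\bigl(S(\sthejsub)-S(\thedag)\bigr)$, kills the first term because the a priori rule \eqref{lower-prior-choice} makes $\epsilon_j$ a zero sequence, and handles the second by weak-to-weak continuity of $S$. The paper does not introduce your third bracket; it simply reads Proposition \ref{prop:seq-prio} as delivering $\sthejsub\rightharpoonup\thedag$ and stops there.

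The place where your proposal genuinely differs is the third bracket, and that is also where it breaks down. You are right to flag that Proposition \ref{prop:seq-prio} only identifies the weak limit $\bar{\theta}$ as \emph{some} solution with $G(\bar{\theta})=y$, but your proposed repair does not close the gap. The second inequality in \eqref{summary-ass-up-1} applied with $G(\bar{\theta})=G(\thedag)$ gives $G'(\bar{\theta})(\bar{\theta}-\thedag)=LS'(\bar{\theta})(\bar{\theta}-\thedag)=0$, which is a statement about the \emph{observed, linearized} map; it does not imply $S(\bar{\theta})=S(\thedag)$ unless $L$ is injective (e.g.\ full observation, where the conclusion is trivial anyway since $G=S$). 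Likewise, the coercivity assumption \eqref{summary-ass-low-3} concerns the lower-level map $\Ft$ at a \emph{fixed} parameter $\theta$ — it compares two candidate states for the same PDE — and carries no information relating $S(\bar{\theta})$ to $S(\thedag)$ for two different parameters. So the sentence ``combined with coercivity-type information one obtains $S(\bar{\theta})=S(\thedag)$'' is not a proof. The honest statement of the lemma under partial observation is that $u^{j_n}_{\kappa(j_n)}\rightharpoonup S(\bar{\theta})$ for a solution $\bar{\theta}$ of the inverse problem; identifying this with $u^\dagger$ requires either $L$ injective, a uniqueness result for the inverse problem on $\Bthe$, or the convention (implicit in the paper) that $\thedag$ denotes the solution singled out by the iteration. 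Up to that identification, your argument is correct and matches the paper's.
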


\begin{proof} 
Proposition \ref{prop:seq-prio} has shown existence of a weakly convergent subsequence $\sthejsub$ to $\thedag$, hence
\begin{align*}
 u^{j_n}_{\kappa(j_n)}-u^\dagger=  \(\widetilde{S}_{\kappa(j_n)}(\sthejsub)-S(\sthejsub)\) +\( S(\sthejsub)-S(\thedag)\)\xrightarrow{n\to\infty}0.
\end{align*}
as the state approximation error is a zero sequence and $S$ is weakly continuous.
\end{proof}

A similar effect also holds under the posterior stopping rule Proposition \ref{prop:seq-poster}; it even holds without the need for further assumptions on the parameter-to-state map.

\begin{lemma}[Posterior approximate $u^\dagger$]\label{lem:approx-u-poster} 

Consider the setting in Proposition \ref{prop:seq-poster}. The output of the lower-level trajectories $u^j_{\kappa(j)}$ then converges to $u^\dagger$, in the sense that
\begin{align}
\sStil(\sthekj)=u^j_{\kappa(j)}\quad  \xrightarrow[j\to\infty]{\Uc}\quad u^\dagger=S(\thedag).
\end{align}
%the solution of the  PDE \eqref{original-pde} at the exact parameter $\thedag$.
\end{lemma}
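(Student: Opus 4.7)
The plan is to exploit the two ingredients delivered by Proposition \ref{prop:seq-poster}: strong (not merely weak-subsequential) convergence $\sthekj\to\bar{\theta}$ of the full parameter sequence, and the vanishing of the residual $\|G(\sthekj)-y\|\to 0$. Together with the posterior stopping rule \eqref{lower-poster-choice}, these will control both summands of a triangle-inequality decomposition, and no additional continuity hypothesis on $S$ will be required.

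Concretely, I would start from the splitting
$$
\|\sStil(\sthekj)-u^\dagger\|_\Uc \;\leq\; \|u^j_{\kappa(j)}-S(\sthekj)\|_\Uc + \|S(\sthekj)-S(\thedag)\|_\Uc \;=\; \epsilon_j + \|S(\sthekj)-u^\dagger\|_\Uc.
$$
For the first summand the posterior rule directly delivers $\epsilon_j\leq\|G(\sthekj)-y\|/C_\text{pos}$, and the right-hand side tends to zero by Proposition \ref{prop:seq-poster}; this is essentially the zero-sequence observation already recorded in the remark preceding the lemma.

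For the second summand, Fréchet differentiability of $S$ together with the uniform bound $\|S'(\theta)\|\leq\MS$ from \eqref{summary-ass-bi-1} yield Lipschitz continuity of $S$ on $\Bthe$ via the same mean-value argument used in the proof of Theorem \ref{theo:accelerate}. The strong convergence $\sthekj\to\bar{\theta}$ then upgrades to $S(\sthekj)\to S(\bar{\theta})$ in $\Vc$, which transfers to the $\Uc$-norm via the continuous embedding $\Vc\embed\Uc$. The identification $S(\bar{\theta})=u^\dagger$ is invoked in the same sense as in the proof of Lemma \ref{lem:approx-u-prior}, since $\bar{\theta}$ is a solution of the inverse problem \eqref{G}.

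The conceptually interesting point — and the reason the lemma claims no further assumptions are needed — is that posterior-rule strong parameter convergence is already enough for ordinary Lipschitz continuity of $S$ to close the argument, whereas the a priori rule's merely weak subsequential convergence forced the separate weak-to-weak continuity hypothesis in Lemma \ref{lem:approx-u-prior}. There is no genuine technical obstacle here beyond assembling these two pieces; the proof should be noticeably shorter than that of its a priori counterpart.
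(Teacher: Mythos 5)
Your proposal is correct and follows essentially the same route as the paper's own proof: the identical triangle-inequality splitting into $\epsilon_j+\|S(\sthekj)-S(\thedag)\|_\Uc$, with the first term controlled by the posterior rule \eqref{lower-poster-choice} and the vanishing residual, and the second by the uniform derivative bound $M_S$ from \eqref{summary-ass-bi-1} together with the strong parameter convergence of Proposition \ref{prop:seq-poster}. Your closing observation about why no extra continuity hypothesis is needed (strong vs.\ weak convergence) matches the paper's implicit reasoning.
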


\begin{proof}
From the posterior rule \eqref{lower-poster-choice} and boundedness of the derivative \eqref{summary-ass-bi-1} , we deduce
\begin{align*}
\|u^j_{\kappa(j)}-u^\dagger\|_\Uc&\leq \|\sStil(\sthekj)-S(\sthekj)\|_\Uc+\|S(\sthekj)-S(\thedag)\|_\Uc\\
&\leq\epsilon_j+\sup_{\theta\in\Bthe}\|S'(\theta)\|_{X\to\Uc}\|\sthekj-\thedag\|_X\leq\epsilon_j+M_S\|\sthekj-\thedag\|\xrightarrow{j\to\infty}0,
\end{align*}
where  convergence of $\epsilon_j$ and of the parameter have been proven in Proposition \ref{prop:seq-poster}.
\end{proof}

\begin{remark}[Approximation in $\Vc$-norm]
A stronger approximation property can be achieved by imposing  $\epsilon_j^\Vc:=\|u_*-u^j_{\kappa(j)}\|_\Vc$ in the posterior rule \eqref{lower-poster-choice}. Indeed, in the proof of Proposition \ref{prop:seq-poster}, step \eqref{s-finitesum}, we bound $\epsilon_j$ above by $\epsilon_j^\Vc$ as $\Vc\embed\Uc$; the rest of the proof remains unchanged. Consequently,
\begin{align*}
\|u^j_{\kappa(j)}-u^\dagger\|_\Vc\leq\epsilon^\Vc_j+\MS\|\sthekj-\thedag\|\quad \xrightarrow{j\to\infty}0.
\end{align*}
with $\MS$ as in Assumption \eqref{summary-ass-bi-1}.% We remind ourselves that $\epsilon_j=:\epsilon_j^\Uc$ is sufficient to prove convergence of the bi-level scheme, i.e~of the parameter sequence $\sthekj$.
\end{remark}

Furthermore, noisy iterates also posses this approximation property. In this case, we are not looking at $\{\sStil(\sthekj)\}_{j\in\N}$, rather the sequence at the stopping index $\jmax(\delta_n)$, meaning $\{\widetilde{S}_{\kappa(\jmax(\delta_n))}(\thekjsub)\}_{n\in\N}$ as regularization suggests (see Theorem \ref{theo:bi-priorstop}, Discussion \ref{dis:reg-multi}). 

\begin{lemma}[Regularized approximate $u^\dagger$]\label{lem:approx-u-reg}

Consider the setting in Theorem \ref{theo:bi-priorstop}. Then the regularized state sequence at the stopping index $\jmax(\delta_n)$ converges to the exact state $u^\dagger$, in the sense that
\begin{align}
\widetilde{S}_{\kappa(\jmax(\delta_n))}(\thekjsub)=u^{\jmax(\delta_n)}_{\kappa(\jmax(\delta_n))}(\thekjsub)\quad  \xrightarrow{\,\,\, \Uc\,\,\,}\quad u^\dagger=S(\thedag)
\end{align} 
when the noise level $\delta_n$ tends to zero as $n\to\infty$.
\end{lemma}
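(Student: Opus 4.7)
The plan is to mirror the posterior case Lemma \ref{lem:approx-u-poster}, with the single difference that noise now enters through the a priori lower-level stopping rule \eqref{condupper-lowerstop}. I would perform the same triangle-inequality splitting
\begin{align*}
\|u^{\jmax(\delta_n)}_{\kappa(\jmax(\delta_n))} - u^\dagger\|_\Uc
&\leq \|\widetilde{S}_{\kappa(\jmax(\delta_n))}(\thekjsub) - S(\thekjsub)\|_\Uc \\
&\quad + \|S(\thekjsub) - S(\thedag)\|_\Uc,
\end{align*}
which isolates the lower-level state approximation error (first term) and the parameter reconstruction error propagated through $S$ (second term).

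For the first term, I would observe that it is by definition $\epsilon_{\jmax(\delta_n)}$ in the sense of \eqref{epj}, and the lower-stopping rule \eqref{condupper-lowerstop} in Theorem \ref{theo:bi-priorstop} enforces $\epsilon_{\jmax(\delta_n)}\leq \delta_n/q^{\jmax(\delta_n)}\leq \delta_n$, which vanishes as $\delta_n\to 0$. For the second term, I would apply the mean-value theorem combined with the uniform derivative bound \eqref{summary-ass-bi-1}, giving $\|S(\thekjsub)-S(\thedag)\|_\Uc\leq M_S\|\thekjsub-\thedag\|_X$, which in turn tends to zero by the strong convergence $\thekjsub\to\thedag$ already guaranteed by Theorem \ref{theo:bi-priorstop}.

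The only technical care needed is to ensure \eqref{summary-ass-bi-1} can be applied along the entire segment joining $\thekjsub$ and $\thedag$. This follows immediately from convexity of $\Bthe$ together with the uniform boundedness of the iterate sequence, which is a byproduct of the regularization analysis underlying Theorem \ref{theo:bi-priorstop} (cf.~\cite[Proposition 4]{nguyen24}). In particular, no weak-to-weak continuity assumption on $S$ is needed, in contrast to the subsequential argument of Lemma \ref{lem:approx-u-prior}. Combining the two vanishing bounds yields the claimed strong $\Uc$-convergence. I do not anticipate any real obstacle: the estimate is essentially a transcription of Lemma \ref{lem:approx-u-poster} with the posterior control $C_\text{pos}\epsilon_j\leq\|G(\sthekj)-y\|$ replaced by the regularization-type control $\epsilon_{\jmax(\delta_n)}\lesssim\delta_n$, and with strong parameter convergence supplied externally by Theorem \ref{theo:bi-priorstop} rather than proved inline.
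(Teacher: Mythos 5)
Your proposal is correct and coincides with the paper's own argument: the same triangle-inequality decomposition into the lower-level error $\epsilon_{\,\jmax(\delta_n)}\leq\delta_n/q^{\jmax(\delta_n)}\to 0$ and the propagated parameter error $M_S\|\thekjsub-\thedag\|\to 0$ supplied by Theorem \ref{theo:bi-priorstop}. The extra remark about applying \eqref{summary-ass-bi-1} along the segment in $\Bthe$ is a harmless refinement the paper leaves implicit.
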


\begin{proof}
Similarly to Lemma \ref{lem:approx-u-poster}, we have the decomposition
\begin{align*}
&\|u^{\jmax(\delta_n)}_{\kappa(\jmax(\delta_n))}(\thekjsub)-u^\dagger\|\\
&\qquad\leq \|\widetilde{S}_{\kappa(\jmax(\delta_n))}(\thekjsub)-S(\thekjsub)\|+\|S(\thekjsub)-S(\thedag)\|\\
&\qquad\leq\epsilon_{\,\jmax(\delta_n)}+M_S\|\thekjsub-\thedag\| \frac{\delta_n}{q^{\jmax(\delta_n)}}+M_S\|\thekjsub-\thedag\|\xrightarrow{n\to\infty}0
\end{align*}
where $\epsilon_{\,\jmax(\delta_n)}\leq \delta_n/q^{\jmax(\delta_n)}, q\geq 1$ is the stopping rule \eqref{condupper-lowerstop}, and parameter convergence is a consequence of the regularization effect Theorem \ref{theo:bi-priorstop}.
\end{proof}

We conclude this section with an interesting observation.

\begin{discussion}[Connection to the incremental load method (ILM)]\label{dis:incremental}
In nonlinear elasticity, an elastic body undergoes large deformations caused by external load, and the stress-strain mechanism is governed by a highly nonlinear PDE \cite{Zeidler4}. %; examples are Saint Venant-Kirchhoff, neo-Hookean models. 
%The displacement field (state) is found by minimizing the energy function or by solving the associating nonlinear PDE. 
Finding the displacement field based on standard iterative PDE solvers exhibits extreme instability \cite{Liu24}; a more robust alternative is the ILM \cite{Ciarlet,Lorenzo22}. ILM designs a loading path that incrementally reaches the final configuration,  passing through a sequence of intermediate states. When one intermediate state is found, it is used as an initial seed for the next evaluation.%, where the load is slightly increased. \blue{shorten!}

This leads to an interesting comparison to the bi-level scheme: the lower-level approximately computes the intermediate states $u^j_{\kappa(j)}$, while $\thekj$  plays the role of the load. Lemmas \ref{lem:approx-u-poster}-\ref{lem:approx-u-reg} have shown convergence of $u^j_{\kappa(j)}$ to the exact state $u^\dagger$, when the load increments to the true configuration $\thedag$. In addition, the sequential Algorithm \ref{algorithm3} uses the intermediate state $u^j_{\kappa(j)}$ as the starting point to iterate to the next state, a special feature of ILM. We demonstrate this link in Section \ref{sec:numerics}, Fig.~\ref{fig-traject-U}.
\end{discussion}

\section{Application to hidden reaction law discovery}\label{sec:application}
\subsection{Nonlinear reaction-diffusion}
We now demonstrate the practical performance of the bi-level algorithms in recovering various reaction laws in microscopic nonlinear diffusion models. In particular, we investigate the evolution diffusion-reaction equation 
\begin{equation}\label{app-eq}
\begin{split}
\dot{u} -\nabla\cdot(a\nabla u)+bu+\Pi(u)&=\phi \qquad (t,x)\in(0,T)\times\Omega\\
u(t=0)&=u_0 \qquad x\in\Omega
\end{split}
\end{equation}
with zero boundary conditions and unknown nonlinear reaction $\Pi$. Here, $\Omega$ is a bounded, Lipschitz domain in $\R^d$, $d=1,2,3$ with Lipschitz boundary; the coefficients $a$ and $c$ are spatially dependent. The reaction $\Pi$ is determined from observations $y$ in the form of full or final time measurement of the state $u$, that is, 
\begin{align}
y=Lu=u \qquad\text{or}\qquad y=Lu=u|_{t=T}.
\end{align}

Reaction-diffusion equations arise naturally in many applications and systems consisting of interacting components, such as chemical reactions \cite{Pao}. Equations on the form \eqref{app-eq} are widely used to describe various pattern-forming phenomena in physics, chemistry and biology, with relevant choices of $\Pi(u)$ including:
\begin{itemize}[label=$\circ$]
\item $u(1 - u)$: Fisher equation in population genetics \cite{Fisher}
\item $u (1 - u) (u -a), a\in(0,1)$: Allen-Cahn equation in natural convection, \blue{phase separation} \cite{AllenCahn,GildingKersner}
\item $-u|u|^p, p\geq 1$: Ginzburg–Landau equation in radiation, superconductivity \cite{BronsardStoh, HoffmannTang}
\item$u(1-u)e^{-a(1-u)}, a\in(0,\infty)$: Zeldovic-Frank-Kamenetskii equation in combustion theory \cite{FrankKamenetskii}
\item $-u/(1+au+bu^2), a^2<4b$: Lane-Emden equation in enzyme kinetics \cite{Pao}, \blue{composite stellar models \cite{Kippenhahn}}
\end{itemize}
Interestingly, reaction-diffusion models also feature in modern signal processing applications, such as segmenting, inpainting and deblurring, an approach that has yielded great success \cite{BENES04,LiInpainting01,Carasso}. 

From this, it is apparent that equations on the general form \eqref{app-eq} are a flexible tool appearing in several different disciplines, and that the nonlinear reaction $\Pi$ plays the central role, leading to distinct and interesting properties for differing use cases. This perspective justifies our interest in \emph{model discovery}: If it is known that a system is governed by \eqref{app-eq}, but $\Pi$ is either unknown or known only approximately, then recovery of $\Pi$ is likely to yield an improved treatment of the underlying problem.

\subsubsection*{Functional framework}
We cast the inverse reaction problem in the framework
\begin{alignat}{3}\label{app-setting}
&u\in\Vc=L^2(0,T;U)\cap H^1(0,T;U^*),\qquad &&U:=H_0^1(\Omega) \nonumber\\
&\phi\in \Uc^*= L^2(0,T;U^*), &&u_0\in H=L^2(\Omega),\\
&y\in \Yc:=L^2(0,T;Y) && Y:= L^2(\Omega) \nonumber
\end{alignat}
with diffusivity $0<a\in L^\infty(\Omega)$, potential $b\in L^2(\Omega)$ and zero boundaries incorporated into $H^1_0(\Omega)$. We also note that the integration-by-parts in Bochner spaces \cite[Lemma 7.3]{Roubicek}
\[\int_0^T \langle \dot{u},v\rangle_{U^*,U}+\langle u,\dot{v}\rangle_{U,U^*}\,dt=\(u(T),v(T)\)_H-\(u(0),v(0)\)_H.\]
 will be employed in adjoint derivation later. 
 
In this setting, the unknown nonlinear reaction $\Pi$ is a Nemytskii superposition operator 
$\Pi:\Vc\to\Uc^*,[\Pi(u)](t,x):=\Pi(u(t,x))$
induced by the Carath\'eodory map
$\Pi\in C(\R)$. To set up the problem in the Hilbert space framework, we choose the parameter space $X$ as
\begin{align}\label{app-setting1}
&\Pi\in X:=H^s(\R)\embed W^{1,\infty}(\R) \quad s>1, \qquad \Pidag\in W^{2,\infty}(\R).
\end{align}
As clearly $H^s(\R)\embed C(\R)$ for $s>1/2$, this suffices for well-definedness of $\Pi$ via its Carath\'eodory generator. However, we will later also consider $s>1$, as this allows for differentiability and convergence guarantee via the tangential cone condition. The ground truth $\Pidag$ is assumed to have higher regularity; its role becomes visible in verification of the tangential cone condition in Section \ref{sec:app-tcc}. 

With our problem and setting established, we next describe the different elements appearing in the bi-level algorithms.

%\red{Extend to $\Pi(u,\nabla u)$ with $\Pi\in C(\R\times\R^d;R)\quad s>(d+1)/2$}

\subsection{Bi-level adjoint structure}
\subsubsection*{Upper-level adjoint -- full measurement}
For full measurement $L=\text{Id}:\Uc\to\Yc$, the parameter-to-observation $G$ boils down to $G=S:X\to\Uc$. The derivative of 
$S:X\ni\Pi\mapsto u\in\Vc$ 
at $\Pi$ in the direction $\xi$ is %given by
\begin{equation}\label{app-linearized-eq}
S'(\Pi)\xi=:p \quad\text{with}\quad
\Bigg\{ \begin{array}{r@{\hspace{1ex}}c@{\hspace{1ex}}l}
\dot{p} -\nabla\cdot(a\nabla p)+bp+\Pi'(u)p & = & -\xi(u) \\
p(0) & = & 0
\end{array}
%\begin{cases}
%&\dot{p} -\nabla\cdot(a\nabla p)+bp+\Pi'(u)p = -%\xi(u)\\
%&p(0)=0,
%\end{cases}
\end{equation}
%\red{DROP b term in equation
%\begin{equation}\label{app-linearized-eq}
%S'(\Pi)\xi=:p \quad\text{with}\quad
%\begin{cases}
%&\dot{p} -\nabla\cdot(a\nabla p)+\Pi'_x(u,\nabla u)p +\nabla_y\Pi(u,\nabla u)\cdot \nabla p= -\xi(u,\nabla u)\\
%&p(0)=0,
%\end{cases}
%\end{equation}
%with $\Pi'_u=:(\Pi'_x;\nabla_y\Pi), (x,y)\in\R\times \R^d$}\\
where $u=S(\Pi)$ is the solution to the nonlinear equation \eqref{app-eq} with reaction term $\Pi$.

The adjoint equation in \eqref{seq-al:approx-adjoint} with input residual $v\in \Yc$ is the backward heat equation
\begin{align}\label{app-adjoint-eq}
\Bigg\{ \begin{array}{r@{\hspace{1ex}}c@{\hspace{1ex}}l}
-\dot{z} -\nabla\cdot(a\nabla z)+bz+\Pi'(u)z & = & v \qquad t\in(0,T)\\
z(T) & = & 0 
\end{array}
%\begin{cases}
%&-\dot{z} -\nabla\cdot(a\nabla z)+bz+\Pi'(u)z = v %\qquad t\in(0,T)\\
%&z(T)=0
%\end{cases}
\end{align}
%\red{\begin{align}\label{app-adjoint-eq}
%\begin{cases}
%&-\dot{z} -\nabla\cdot(a\nabla z)+\Pi_x'(u,\nabla u)z+\nabla\cdot\big(z\nabla_y\Pi(u,\nabla u) \big)  = v \qquad t\in(0,T)\\
%&z(T)=0
%\end{cases}
%\end{align}}
using integration-by-parts and the Banach space adjoint $\langle \Pi'(u)h,z\rangle=\langle h,\Pi'(u)z\rangle$. Compared to \eqref{seq-al:approx-adjoint}, the isomorphism $D_U$ in the source is skipped, as we already incorporated the observation space $\Yc=L^2(0,T;L^2(\Omega))$, simplifying $L^*L$ when $L=\text{Id}$.

For the adjoint integral, since the parameter and state are defined on different domains ($\R$ and $\Omega$), we are not deriving $S'(\Pi)^*v$ via the isomorphism  $I_U$, but rather directly, %. In particular, we write the inner product in the parameter space $X=H^s(\R)$ 
using the Fourier transform $\Fc$, employing the identity $( \xi,h )_X = \int_\R (1+|\omega|^2)^s \Fc(\xi)(\omega)\overline{\Fc(h)}\omega)\,d\omega$. Paring the right hand side of the linearized equation \eqref{app-linearized-eq} with $z$ in \eqref{app-adjoint-eq}, one has
\begin{align}\label{app-adjoint-int}
 &\langle -\xi(u),z \rangle_{\Uc^*,\Uc} \\&\quad=	-\int_0^T\int_\Omega \xi(u(t,x))z(t,x)\,dt\,dx=-\int_0^T\int_\Omega \frac{1}{\sqrt{2\pi}} \int_\R \Fc(\xi)e^{i\omega u(t,x)} z(t,x)\,d\omega\,dt\,dx \nonumber\\
& \quad=\(\xi,-\frac{1}{\sqrt{2\pi}}\Fc^{-1}\(\frac{1}{(1+|\omega|^2)^s}\int_0^T\int_\Omega e^{-i\omega u(t,x)} z(t,x)\,dt\,dx\) \)_X =: \(\xi, S'(\Pi)^*v \)_X.\nonumber
\end{align}
%\red{
%\begin{align}\label{app-adjoint-int}
% &\langle -\xi(u,\nabla u),z \rangle_{U^*,U} =-\int_0^T\int_\Omega \frac{1}{(2\pi)^{(1+d)/2}} \int_{\R^{1+d}} \Fc(\xi)e^{i\omega u(t,x)+i\kappa\nabla u(t,x)} z(t,x)\,d\omega\,d\kappa\,dt\,dx \nonumber\\
%& \quad=\(\xi,-\frac{1}{\sqrt{2\pi}}\Fc^{-1}\(\frac{1}{(1+|\omega|^2)^{(1+d)/2}}\int_0^T\int_\Omega e^{-i\omega u(t,x)-i\kappa\nabla u(t,x)} z(t,x)\,dt\,dx\) \)_X \nonumber\\
%&\quad=: \(\xi, S'(\Pi)^*v \)_X
%\end{align}
%}
Expressions \eqref{app-adjoint-eq} and \eqref{app-adjoint-int} complete the adjoint step  \eqref{seq-al:approx-adjoint} in the bi-level algorithms, explicitly given as
\begin{equation}\label{app-adjoint}
\begin{split}
&v  :=  u^j_{\kappa(j)}-\ydel, \\[0.5ex]
&S'(\Pitil)^*v  =  -\dfrac{1}{\sqrt{2\pi}}\Fc^{-1}\(\dfrac{1}{(1+|\omega|^2)^s}\int_0^T\int_\Omega e^{-i\omega u^j_{\kappa(j)}(t,x)} \ztil(t,x)\,dt\,dx\), \\[1ex]
&\begin{cases}
&-\dot{\ztil}-\nabla\cdot(a\nabla \ztil)+b\ztil+\Pitil'(u^j_{\kappa(j)})\ztil  = v \qquad  t\in(0,T)\\
&\ztil(T) = 0,
\end{cases}
\end{split}
\end{equation}
where $u^j_{\kappa(j)}=\sStil(\Pitil)$ is approximated by the lower-level. Before moving to the lower-level, we derive the adjoint for restricted data.

\subsubsection*{Upper-level adjoint -- final time measurement}
%Observation at final time $L=(\cdot)_{t=T}:\Vc\to Y=H=L^2(\Omega)$ is well-defined as $\Vc\embed C(0,T;H)$. 
The adjoint state $z$ in this case is modified to
\begin{align*}
&\(G'(\Pi)\xi,v_T\)_Y %=\big((S'(\Pi)\xi)|_{t=T},v_T\big)_{L^2} 
= \int_\Omega p(T)v_T\,dx\\
&=\int_0^T\int_\Omega p\underbrace{\(-\dot{z} -\nabla\cdot(a\nabla z)+bz+\Pi'(u)z\)}_{=:0} \,dt\,dx + \int_\Omega p(T)v_T\,dx\\
&=\int_0^T\int_\Omega \(\dot{p} -\nabla\cdot(a\nabla p)+bu+\Pi'(u)p\)z\,dt\,dx+   \int_\Omega p(T)\underbrace{\(v_T-z(T)\)}_{=:0}\,dx=\langle-\xi(u),z\rangle%_{\Uc^*,\Uc}
\end{align*}
for any $v_T\in H=L^2(\Omega)=Y$, with $p$ solving the linearized equation \eqref{app-linearized-eq}. The rest follows as in \eqref{app-adjoint-int}.
The upper-level adjoint with final time measurement  therefore takes the form
%\begin{equation}
\begin{align}\label{app-adjoint-final}
&v:=u^j_{\kappa(j)}|_{t=T}-\ydel, \nonumber\\[0.5ex]
&S'(\Pitil)^*v = -\frac{1}{\sqrt{2\pi}}\Fc^{-1}\(\frac{1}{(1+|\omega|^2)^s}\int_0^T\int_\Omega e^{-i\omega u^j_{\kappa(j)}(t,x)} \ztil(t,x)\,dt\,dx\),\\[1ex]
&\begin{cases}
&-\dot{\ztil}-\nabla\cdot(a\nabla \ztil)+b\ztil+\Pitil'(u^j_{\kappa(j)})\ztil= 0 \qquad  t\in(0,T)\\
&\ztil(T)=v
\end{cases}
\end{align}
%\end{equation}
differing from  \eqref{app-adjoint} only in where the data $v$ enters the adjoint equation. Having completed the adjoint derivation for the upper-level, we proceed with the lower-level.

\subsubsection*{Lower-level adjoint}
The forward operator in the lower-level is the PDE model \eqref{app-eq} at any fixed $\Pi$
\begin{align*}
 F:\Vc\to\Uc^*\times H\qquad F(u)=F_\Pi(u):=
 \begin{pmatrix}
 \dot{u} -\nabla\cdot(a\nabla u)+bu+\Pi(u) \\
u|_{t=0}
 \end{pmatrix}
 %=\begin{pmatrix}
 %\phi\\
 %u_0
 %\end{pmatrix}=:\varphi.
\end{align*}
%\red{
%\begin{align*}
% F:\Vc\to\Uc^*\times H\qquad F(u)=F_\Pi(u):=
% \begin{pmatrix}
% \dot{u} -\nabla\cdot(a\nabla u)+bu+\Pi(u,\nabla u)-\varphi \\
%u_{t=0}-u_0
% \end{pmatrix}
%\end{align*}}
Writing out the inner products in the state spaces $\Vc$ resp.~in the image space $\Uc^*\times H$ (c.f.~\eqref{app-setting}) and employing integration-by-parts in the Bochner spaces as well as the Riesz isomorphisms $D_U:U\to U^*$, $I_U:U^*\to U$, we write
\begin{align*}
&(u,z)_\Vc =\int_0^T (u,z)_U+(\dot{u},\dot{z})_{U^*}\,dt=\int_0^T\langle u, D_U z\rangle_{U,U^*}+\langle \dot{u},  I_U\dot{z}\rangle_{U^*,U}\,dt\\
&\qquad\quad=\int_0^T\langle u, D_U z- I_U\ddot{z}\rangle_{U,U^*}\,dt + (u(T),I_U\dot{z}(T))_H-(u(0),I_U\dot{z}(0))_H,\\[1ex]
&\(F'(u)\util, \begin{pmatrix}v\\h\end{pmatrix}\)_{\Uc^*\times H}=\bigg\langle F'(u)\util, \begin{pmatrix}I_Uv\\h\end{pmatrix}\bigg\rangle_{\Uc^*\times H,\,\Uc\times H}\\
&\qquad\quad=\int_0^T \Big\langle\util, \Big[ -d/dt- \nabla\cdot(a\nabla \cdot)+b+\Pi'(u)\Big]I_U v  \Big\rangle dt \\&\qquad\qquad + \(\util(T),I_U v(T)\)_H-\(\util(0),I_U v(0)-h\)_H=:\Big( \util, F'(u)^*(v;h) \Big)_\Vc
\end{align*}
for any $u$, $\util\in\Vc$. As $U=H_0^1(\Omega)$, we may choose $D_U=I_U^{-1}=-\Delta$, and so one obtains the bi-Laplacian wave equation
\begin{align}\label{app-adjoint1}
F'(u)^*(v;h)=:z\quad\text{with } \Bigg\{
\begin{array}{r@{\hspace{1ex}}c@{\hspace{1ex}}l}
-\ddot{z} + \Delta^2 z & = & \Delta \Big[ -d/dt- \nabla\cdot(a\nabla \cdot)+b+\Pi'(u)\Big]\Delta^{-1} v\\
\dot{z}(0) & =& v(0)+\Delta h; \quad \dot{z}(T)=v(T).
\end{array}
\end{align}
%\red{
%\begin{align}\label{app-adjoint1}
%F'(u)^*(v;h)=:z\quad\text{with }
%\begin{cases}
%-\ddot{z} + \Delta^2 z = \Delta \Big[ -d/dt- \nabla\cdot(a\nabla \cdot)+\Pi'_x(u,\nabla u) +  \nabla\cdot\big(\cdot\nabla_y\Pi(u,\nabla u)\big)\Big]\Delta^{-1} v\\
%\dot{z}(0)=v(0)+\Delta^{-1} h; \quad \dot{z}(T)=v(T)
%\end{cases}
%\end{align}
%}
It is worth remarking that if the $\Uc$-inner product is considered, rather than the $\Vc$-inner product, then \eqref{app-adjoint1} boils down to a bi-Laplace elliptic equation, without the time factor. One also notices that if $a$, $b$ are constant, then the right-hand side of \eqref{app-adjoint1} simplifies to $-\dot{v}- a\Delta v +bv+\Delta(\Pi'(u)\Delta^{-1}v)$.

The adjoint in the lower-level has certain features in common with the all-at-once adjoint (c.f.~\cite{KaNg2022}), reflecting the fact that the bi-level method stands at the intersection between the reduced and the all-at-once approaches.

\subsubsection*{Algorithms for reaction identification}
Collecting all the ingredients, we build the algorithms for reaction law discovery.

\begin{algorithm}\caption{Reaction recovery from full data}\label{algorithm-app}
\{Upper-level\} Initialize $\Pitil^{\delta,0}\in B_R(\Pidag)$. Update:
\begin{equation}
\begin{split}\label{ex:update-pi}
& \Pitil^{\delta,j+1} = \Pitil^{\delta,j} -\dtilde{S'}(\Pitil^{\delta,j})^*\(\sStil(\Pitil^{\delta,j})-\ydel\) \qquad j\leq \jmax(\delta)
\end{split}
\end{equation}
\{Lower-level\} At each $\Pitil:=\Pitil^{\delta,j}$, sequentially initialize
\begin{equation*}
\boxed{\,\,\, u^j_0 := u^{j-1}_{\kappa(j-1)}\,\,\,}
\end{equation*}
\hphantom{\{Lower-level\}} then run:
\begin{equation}%\label{seq-al:approx-S}
\begin{split}
& u^j_{k+1}=u^j_k-F_u'(\Pitil,u^j_k)^*\(F(\Pitil,u^j_k)-\varphi\) \qquad k\leq \kappa(j) \\
& u^j_{\kappa(j)} = :\sStil(\Pitil)
\end{split}
\end{equation}
\hphantom{\{Lower-level\}} with PDE adjoint (c.f~\eqref{app-adjoint1})
\begin{align}\label{ex:adjoint-u}
&(v;h):= F(\Pitil,u^j_k)-\varphi\nonumber\\[1ex]
&F'(\Pitil,u)^*(v;h)=:z \nonumber\\[1ex]
&\begin{cases}
-\ddot{z} + \Delta^2 z= \Delta \Big[ -d/dt- \nabla\cdot(a\nabla \cdot)+b+\Pitil'(u)\Big]\Delta^{-1} v \\
\hspace{8cm} t\in(0,T)\\
\dot{z}(0)=v(0)+\Delta h; \qquad \dot{z}(T)=v(T)
\end{cases}
\end{align}
\{Upper-level\} Compute approximate adjoint (c.f.~\eqref{app-adjoint})
\begin{equation}\label{ex:adjoint-pi}
\begin{split}
&v:=u^j_{\kappa(j)}-\ydel\\[0.5ex]
&\dtilde{S'}(\Pitil)^*v = \frac{-1}{\sqrt{2\pi}}\Fc^{-1}\(\frac{1}{(1+|\omega|^2)^s}\int_0^T\int_\Omega e^{-i\omega u^j_{\kappa(j)}(t,x)} \ztil(t,x)\,dt\,dx\)\\[1ex]
&\begin{cases}
&-\dot{\ztil}-\nabla\cdot(a\nabla \ztil)+b\ztil+\Pitil'(u^j_{\kappa(j)})\ztil= v \qquad  t\in(0,T)\\
&\ztil(T)=0
\end{cases}
\end{split}
\end{equation}

\end{algorithm}
\begin{algorithm}[H]\caption{Reaction recovery from terminal time data}\label{algorithm-app-final}\,\\
Identical to Algorithm \ref{algorithm-app}, except:\\[1ex]
\{Upper-level\} Compute approximate adjoint state (c.f.~\eqref{app-adjoint-final})
\begin{equation*}
\begin{split}
&v:={u^j_{\kappa(j)}}|_{t=T}-\ydel\\[0.5ex]
&\dtilde{S'}(\Pitil)^*v = \frac{-1}{\sqrt{2\pi}}\Fc^{-1}\(\frac{1}{(1+|\omega|^2)^s}\int_0^T\int_\Omega e^{-i\omega u^j_{\kappa(j)}(t,x)} \ztil(t,x)\,dt\,dx\)\\[1ex]
&\begin{cases}
&-\dot{\ztil}-\nabla\cdot(a\nabla \ztil)+b\ztil+\Pitil'(u^j_{\kappa(j)})\ztil= 0 \qquad  t\in(0,T)\\
&\ztil(T)=v
\end{cases}
\end{split}
\end{equation*}

\end{algorithm}

%\subsubsection*{Extension of current Algorithms}
%\blue{to quasilinear with nonlinear advection $\Pi(u,\nabla u)$ and to components RD $\Pi(u,v)$}

\subsection{Tangential cone condition and discussion}\label{sec:app-tcc}

Having established the details of the reconstruction procedure, we discuss some important assumptions before carrying out our numerical experiments. The most challenging aspect of Assumption \ref{summary-ass-bi} to establish is the tangential cone condition \eqref{summary-ass-up-1}, which quantifies nonlinearity of the forward map. The tangential cone condition was first introduced in the seminal work \cite{scherzer95}, and recently developed into several extended versions by \cite{Kindermann17}. This structural condition is the key to ensure convergence of iterative gradient-based methods, such as Landweber, Newton type methods, their (projected, constrained, accelerated, frozen) variations as well as of the bi-level schemes proposed here. 

The tangential cone condition was historically developed for ill-posed inverse problems with compact forward operators, but also has certain relation to other nonlinearity conditions popular in optimization, such as convexity and the Polyak-Lojasiewicz condition \cite{nguyen24}. %Together with smoothness of exact solutions, the tangential cone condition implies variational source condition yielding convergence rate \cite{Flemming,Clason-lecture-IP}. 
We refer to \cite{TCC21} for a general verification strategy of such condition for parameter identification in  nonlinear parabolic PDEs, to \cite{HoffmanWaldNguyen:2021} for linear elliptic inverse problems, to \cite{Nakamura:MRE21, HubmerScherzer:TCC18} for elastography, to \cite{Rieder:TCC21, EllerRolandRieder24} for full wave form inversion, to \cite{Kindermann21} for electrical impedance tomography, and, recently, to \cite{ScherzerHofmannNashed, holler22learning_parameter_id} for deep neural networks. Extending this line of work, we now investigate the tangential cone condition for reaction problems.

\subsubsection*{Verification of the tangential cone condition}
Denote by $\Pidag$ the exact unknown reaction law. The tangential cone condition \eqref{summary-ass-up-1} for the parameter-to-observation map $G=L\circ S$ then reads as
\begin{equation}\label{app-tcc1}
\begin{split}
\|G(\Pi)-G(\Pidag)-G'(\Pi)(\Pi-\Pidag)\|_\Yc\leq C_{tc}\|G(\Pi)-G(\Pidag)\|_\Yc\qquad &C_{tc}<1\\ &\forall\Pi\in  B_R(\Pidag)
\end{split}
\end{equation}
with $C_{tc}:=1-(M_R^2+\mu_R)/2\in(0,1)$. Clearly, this condition constrains the linearized error of $G$ around the ground truth $\Pidag$. For full observation $L=\text{Id}$, the relation \eqref{app-tcc1} becomes
\begin{equation}\label{app-tcc2}
\begin{split}
\|S(\Pi)-S(\Pidag)-S'(\Pi)(\Pi-\Pidag)\|_\Yc\leq C_{tc}\|S(\Pi)-S(\Pidag)\|_\Yc\qquad &C_{tc}<1\\ &\forall\Pi\in  B_R(\Pidag).
\end{split}
\end{equation}
In the following, we will establish the tangential cone condition for full data; the same question for final time data is a subject for future research.

\emph{Step 1.} We begin by constructing the equation associated with the left hand side of \eqref{app-tcc2} using \eqref{app-eq} and its linearization \eqref{app-linearized-eq}, obtaining
\begin{equation*}
\begin{array}{ccc}
\begin{array}{r@{\hspace{1ex}}c@{\hspace{1ex}}l}
S(\Pi)& =:& u \\  
S(\Pidag) & =: & v \\
S'(\Pi)(\Pi-\Pidag) & =:& w  
\end{array}
\begin{array}{ll}
& \dot{u} -\nabla\cdot(a\nabla u)+bu+\Pi(u) = \phi \\
& \dot{v} -\nabla\cdot(a\nabla v)+bv+\Pidag(v) = \phi \\
& \dot{w} -\nabla\cdot(a\nabla w)+bw+\Pi'(u)w = -(\Pi-\Pidag)(u)
\end{array}
\begin{array}{r@{\hspace{1ex}}c@{\hspace{1ex}}l}
u|_{t=0} & = & u_0\\
v|_{t=0} & = & u_0\\
w|_{t=0} & = & 0.
\end{array}
\end{array}
\end{equation*}
Hence,
\begin{align*} %\label{app-TCC-LHS}
& W:=u-v-w \qquad\text{solves}\\
&\begin{cases}\dot{W} -\nabla\cdot(a\nabla W)+bW+\Pi'(u)W=-\Pi(u)+\Pidag(v)+\Pi'(u)u-\Pi'(u)v+ (\Pi-\Pidag)(u)
\\W|_{t=0}=0.
\end{cases} \nonumber
\end{align*}

\emph{Step 2.} Next, \eqref{summary-ass-bi-2} supposes that for the adjoint linearized equation, the solution in $\Uc=L^2(0,T;H^1_0(\Omega))$ depends continuously on the source term in $\Uc^*$. %; by duality argument, the same holds for the linearized equation \eqref{app-TCC-LHS}. %\rec{Now, we wish to estimate for $W$ in the weaker space $(\Uc\subset)\Yc=L^2(0,T;L^2(\Omega))$, thus assuming that $W$ can be bounded by the weaker source $L^2(0,T;H^{-2}(\Omega))$ (up to some constant $D_{f_u}$), i.e.~shifting one degree of smoothness in both source term and solution\footnote{This hypothesis is feasible to verify by bootstrapping and duality argument, c.f.~\cite[Chap. 7.1.3]{Evans}, \cite{{HoffmanWaldNguyen:2021}}.}.}
Employing continuity of the various relevant embeddings, we have
\begin{align*}
&1/(C_{\Uc\to\Yc}C_{f_u})\|W\|_\Yc\\
&\leq1/C_{f_u}\|W\|_{\Uc}\leq%\|-\Pi(u)+\Pidag(v)+\Pi'(u)u-\Pi'(u)v  +(\Pi-\Pidag)(u)\|_{\Uc^*}= \\
\|\Pidag(v)-\Pidag(u)+\Pi'(u)(u-v)\|_{L^2(H^{-1})}\\
&\leq  \big\| \int_0^1 \Pi'_\dagger\big(u+\lambda(v-u)\big)\,d\lambda(v-u) - \Pidag'(u)(v-u)\big\| + \big\|\Pi'_\dagger(u)(v-u)-\Pi'(u)(u-v)\big\|\\
&= \big\| \int_0^1\int_0^1 \Pi''_\dagger\big(u+\eta\lambda(v-u)\big)\,d\lambda\,d\eta(v-u)^2\big\|+ \big\|(\Pi'_\dagger-\Pi')(u)(u-v)\big\|\\
&\leq C_{H^1\to L^1}\|\Pi''_\dagger\|_{L^\infty(\R)}\|(v-u)^2\|_{L^2(L^1)} + C_{H^1\to L^2}\|\Pi'_\dagger-\Pi'\|_{L^\infty(\R)}\|v-u\|_{L^2(L^2)}\\
&\leq C_{\Vc\to C(H)}C_{H^1\to L^1}\|\Pi_\dagger\|_{W^{2,\infty}(\R)}\|v-u\|_\Vc\|v-u\|_\Yc \\&\qquad\qquad+ C_{X\to W^{1,\infty}}C_{H^1\to L^2}\|\Pi_\dagger-\Pi\|_X\|v-u\|_\Yc
\end{align*}
%using the mean value theorem and noting that $\Vc\embed C(0,T;H)$, $X\embed W^{1,\infty}(\Omega)$. 
From this, we see see that the ground truth $\Pidag$ possesses higher regularity than of $X$.

\emph{Step 3.} Further estimating $\|v-u\|_\Vc$ using bounded derivative assumption \eqref{summary-ass-bi-1} yields
\begin{align*}
&\|v-u\|_\Vc=\|S(\Pidag)-S(\Pi)\|_\Vc\leq \sup_{\Pi\in B_R(\Pidag)}\|S'(\Pi)\|_{X\to\Vc} \|\Pi_\dagger-\Pi\|_X\leq \overline{M}_S R.
\end{align*}
By this, we are now ready to show \eqref{app-tcc2}:
\begin{align*}%\label{cond-upper-verify}
&\|S(\Pi)-S(\Pidag)-S'(\Pi)(\Pi-\Pidag)\|_\Yc=\|W\|_\Yc \nonumber\\
&\leq C_{\Uc\to\Yc}C_{f_u}\big( C_{\Vc\to C(H)}C_{H^1\to L^1}\|\Pi_\dagger\|_{W^{2,\infty}(\R)}\overline{M}_S +  C_{X\to W^{1,\infty}}C_{H^1\to L^2}\big)R \nonumber\\&\qquad\cdot\|S(\Pi)-S(\Pidag)\|_\Yc \nonumber\\[0.5ex]
&=:C_{tc}\|S(\Pi)-S(\Pidag)\|_\Yc
\end{align*}
with $C_{tc}<1$ for sufficiently small radius $R>0$ of the ball $B_R(\Pidag)$. The tangential cone condition has thus been established.

We conclude this section by some remarks on uniqueness. The tangential cone condition \eqref{app-tcc2} induces uniqueness of the minimum norm solution $\Pi_\dagger$. Together with the null space condition  
$ \mathcal{N}(G'(\Pidag))\subset\mathcal{N}(G'(\Pi))$ for all $\Pi\in B_R(\Pidag)$, the reconstruction sequence converges to this minimum norm solution by \cite{KalNeuSch08}. On the subject of unique identifiability with restricted data, e.g.~boundary, time trace or final time measurement, we refer to the work in \cite{DuChateauRundell:1985,PilantRundell:1986}.

\subsection{Numerical results}\label{sec:numerics}
This section implements the bi-level algorithms for several reaction-diffusion-based applications. We select various models -- enzyme kinetics, combustion, biology -- to examine:
\begin{itemize}[label=]
\item Fisher equation: $\Pi(u)=4u(1 - u)$ 
\item Lane-Emden equation: $\Pi(u)=2u/(1+u+4u^2)$ 
\item Zeldovic-Frank-Kamenetskii (ZFK) equation: $\Pi(u)=4u(1-u)e^{-2(1-u)}$ 
\end{itemize}
with diffusivity part  $\dot{u}-\Delta u$.

For implementation setup, we partition the space interval $[0,1]$ into $101$ regular grids points, and the time line $[0,0.1]$ into $51$ moments. The Laplacian $\Delta$ in  \eqref{ex:adjoint-u} and the bi-Laplacian $\Delta^2$ in \eqref{ex:adjoint-pi} are approximated by central difference quotients. To invert these differential operators, we invoke the Python's inbuilt LU decomposition. 

Regarding time integration in the adjoint \eqref{ex:adjoint-pi}, a trapezoid rule is employed. It is worthwhile to remark that the reaction $\Pi$ in these examples are functions of the state $u$, itself ranging within the interval $[-1,1]$; we therefore discretize $\text{ran}(u)$ into $50$ equidistant points. Python's inbuilt inverse fast Fourier transform (iFFT) is then called to complete the last step of the adjoint. 

The derivative of $\Pi$ is needed for the adjoint equations. This derivative is initially approximated by finite difference at the collocation points in the range of $u$. Then, the linear interpolation $\Pi'(\text{ran}(u))\mapsto\Pi'(u(t,x))$ re-grids it to the physical domain $(t,x)$ before inputting it into the adjoint PDEs. A more precise -- albeit more costly -- option is to perform several inverse normal Fourier transforms. % at $101\times 51$ point of $u(x,t)$ instead of at 50 points of $\text{ran}(u)$ then interpolating. 
Finally, inversion is performed with full measurement (Algorithm \ref{algorithm-app}) and with final time measurement (Algorithm \ref{algorithm-app-final}) with errors plot in $L^2$-norms.\\

\begin{figure}[htb!]
\centering
\includegraphics[trim=3.5cm 0cm 0cm 0cm,clip=true,scale=0.4]{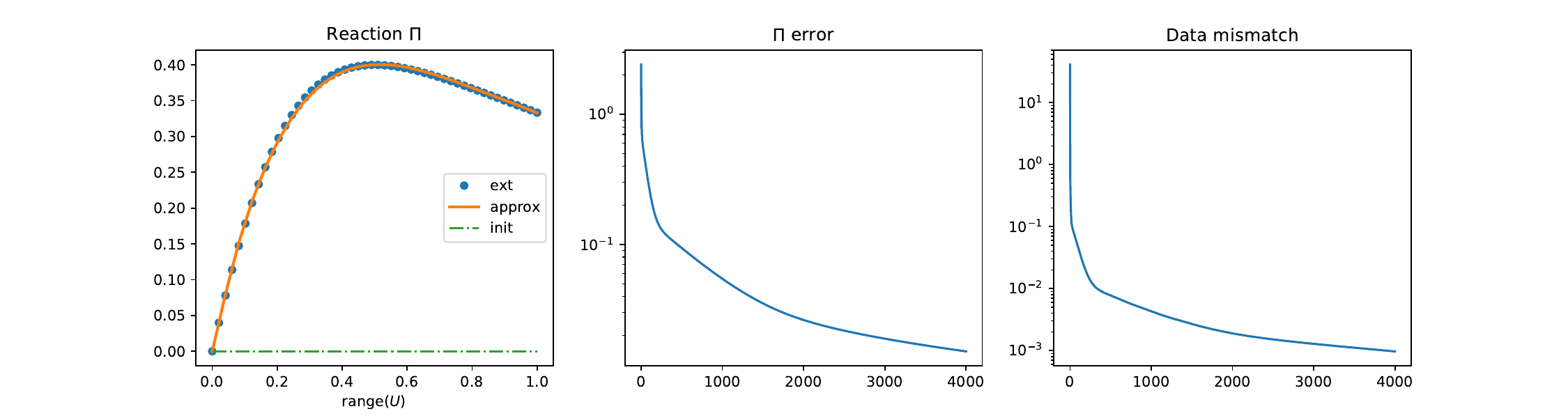}\\
\includegraphics[trim=3.5cm 0cm 2cm 0cm,clip=true,scale=0.4]{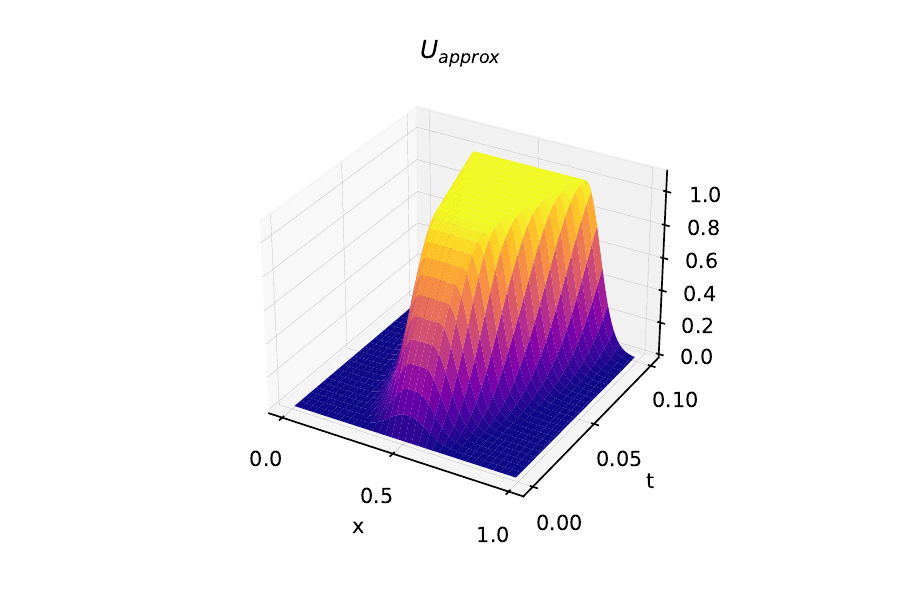}
\includegraphics[trim=3.5cm 0cm 2cm 0cm,clip=true,scale=0.4]{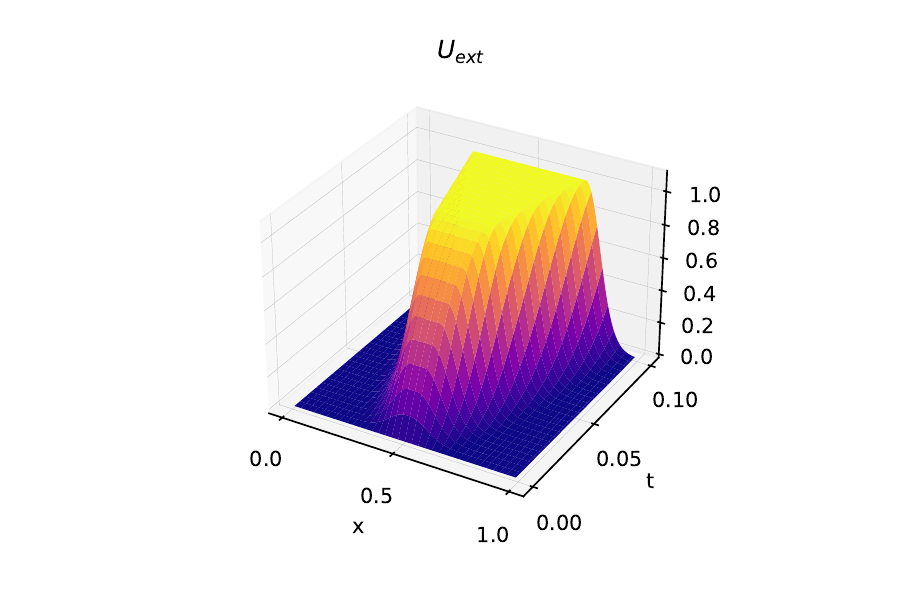}\\
\includegraphics[trim=3.5cm 0cm 2cm 0cm,clip=true,scale=0.4]{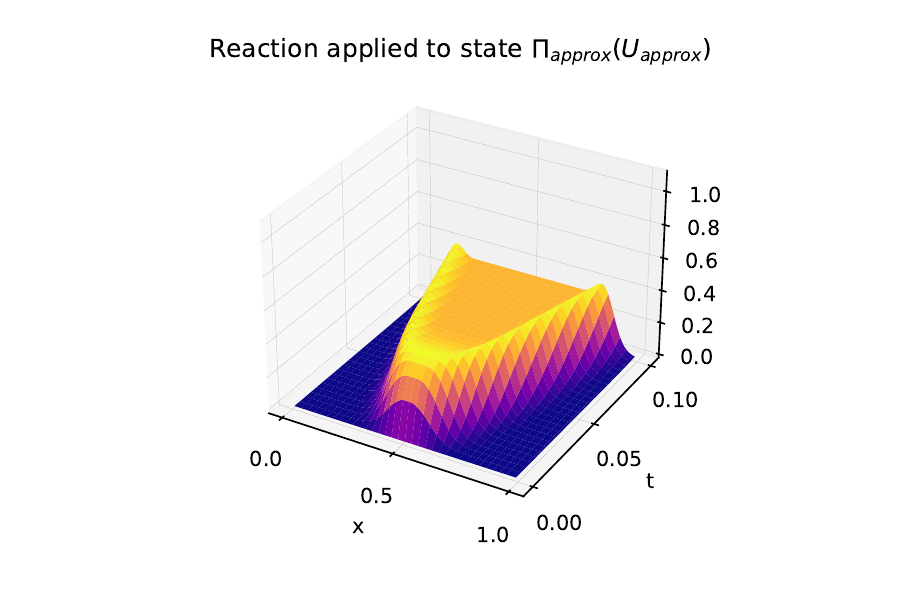}
\includegraphics[trim=3.5cm 0cm 2cm 0cm,clip=true,scale=0.4]{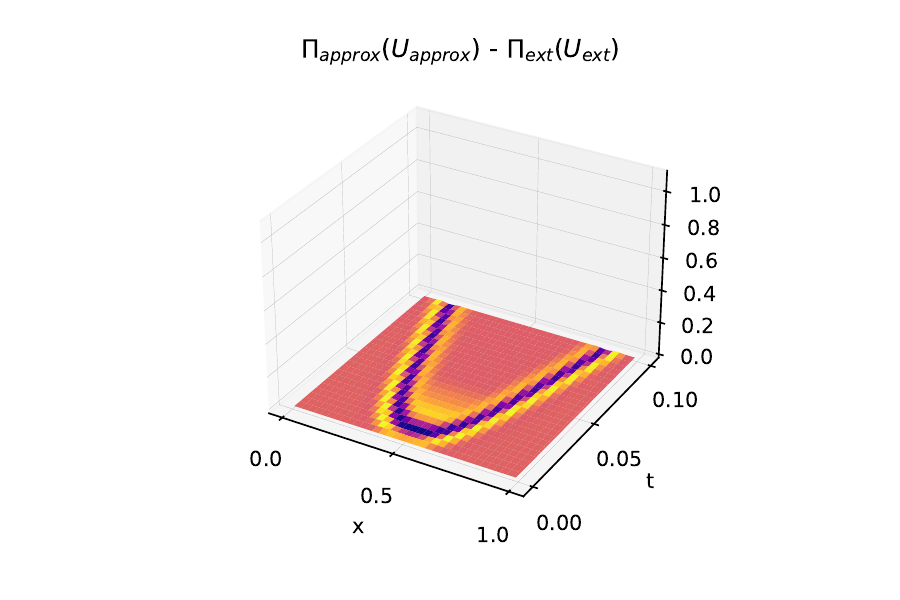}
\caption{Lane-Emden -- Full data. Top: reaction reconstruction. Middle: state reconstruction. Bottom: reaction applied to state.}\label{fig-full}
\end{figure}

\noindent\textbf{Full data. }Fig.~\ref{fig-full} presents the bi-level reconstruction result for the Lane-Emden equation. The top left panel, which we will refer to as panel $(1,1)$, displays the reconstructed reaction $\Pi$ plotted over the range of $u$ as the $x$-axis. Despite the very poor initial guess $\Pi_\text{init}=0$, the estimation shows an excellent fit to the ground truth $\Pi^\dagger$ after $4000$ iterations. As a consequence, the error in panel $(1,2)$ decays. 

Regarding the state, the estimation in panel $(2,1)$ well approximates the exact one, panel $(2,2)$. We emphasize that the lower-level is initiated at zero, without any prior information on $u^\dagger$, whereupon -- as Algorithm \ref{algorithm3} suggests -- we sequentially employ the output of each lower-trajectory as the stating point to compute the next one. The application of the reaction law to the state $\Pi(u)$ modifies $u$ in a nonlinear manner (panel $(3,1)$). %, characterizing how it interacts with the rest of the equation. 
Panel $(3,2)$ reports the error, which is close to being a zero plane. Overall convergence is confirmed by the descent of the data mismatch in panel $(1,3)$.\\

\begin{figure}[htb!]
\centering
\includegraphics[trim=3.5cm 0cm 0cm 0cm,clip=true,scale=0.4]{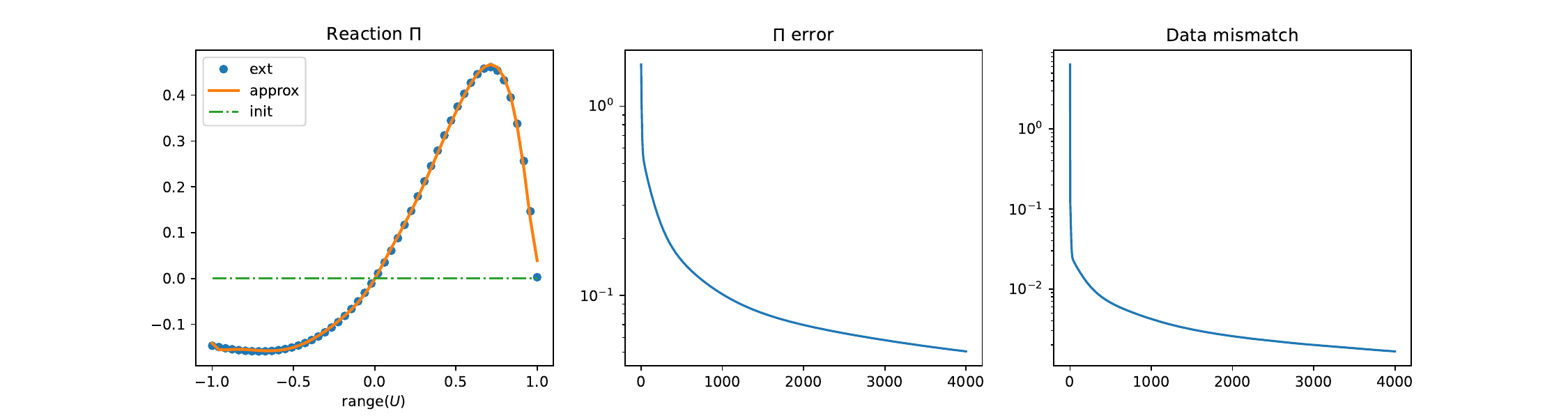}\\
\includegraphics[trim=3.5cm 0cm 2cm 0cm,clip=true,scale=0.4]{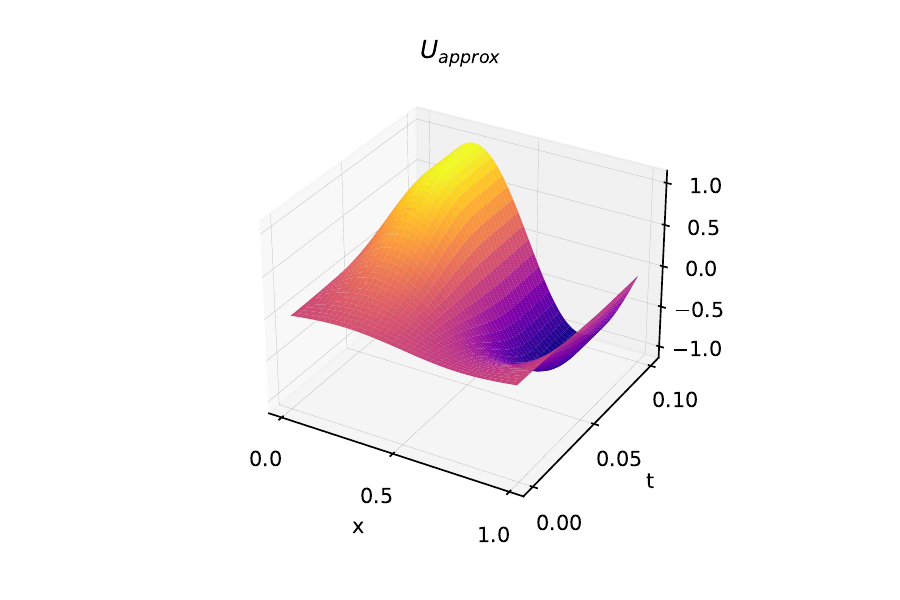}
\includegraphics[trim=3.5cm 0cm 2cm 0cm,clip=true,scale=0.4]{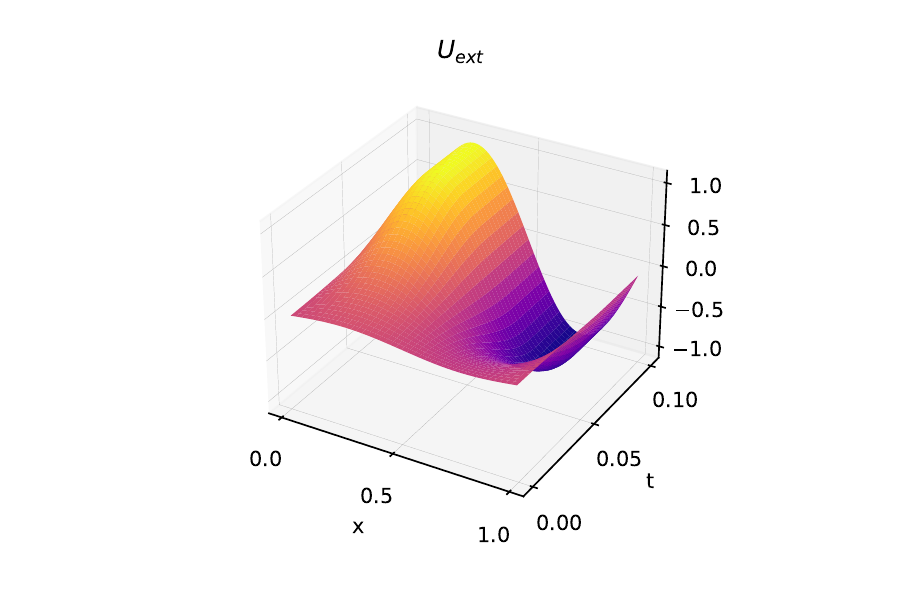}
\caption{ZFK -- Terminal time data. Top: reaction reconstruction. Middle: state reconstruction. }\label{fig-final}
\end{figure}

\noindent\textbf{Final time data. }
Algorithm \ref{algorithm-app-final} with final time measurement $y=u(T)$ can successfully recover the reaction and state, as evidenced by Fig.~\ref{fig-final}. In panel $(1,1)$, we observe that the reconstructed $\Pi$ nicely captures the features of the true nonlinearity in the ZFK equation. In addition, the approximate state also matches the ground truth, as seen in panels $(2,1), (2,2)$. The decay in panels $(1,2)$ and $(1,3)$ numerically confirms convergence of the bi-level method. \\

\noindent\textbf{Standard vs.~sequential bi-level. }We now compare Algorithm \ref{algorithm2} and  Algorithm \ref{algorithm3} for ZFK model with final time measurement. Fig.~\ref{fig-biseq} top and bottom rows respectively displays the internal operation of these schemes. The left panels plots the number of lower-iterations ($y$-axis) over each upper-iteration ($x$-axis). With the same precision, the standard scheme ran with the same amount of inner-loops. Remarkably, the sequential version requires much fewer iterations; in fact, the number of required lower-iterations decays rapidly, with no more than a single lower-iteration needed after about half the total number of upper-iterations. In addition, the PDE residual panels on the right clearly depicts acceleration of the sequential version. 

Regarding the reconstruction, we observe that in Fig.~\ref{fig-biseq-2} the sequential version outcome behaves much like the standard one, demonstrating the fact that in spite of the significantly lower computational cost of the sequential algorithm, the inversion quality is similar to that of the non-sequential bi-level algorithm.\\

\begin{figure}[htb!]
\centering
\includegraphics[trim=1cm 0cm 1.5cm 0.5cm,clip=true,scale=0.36]{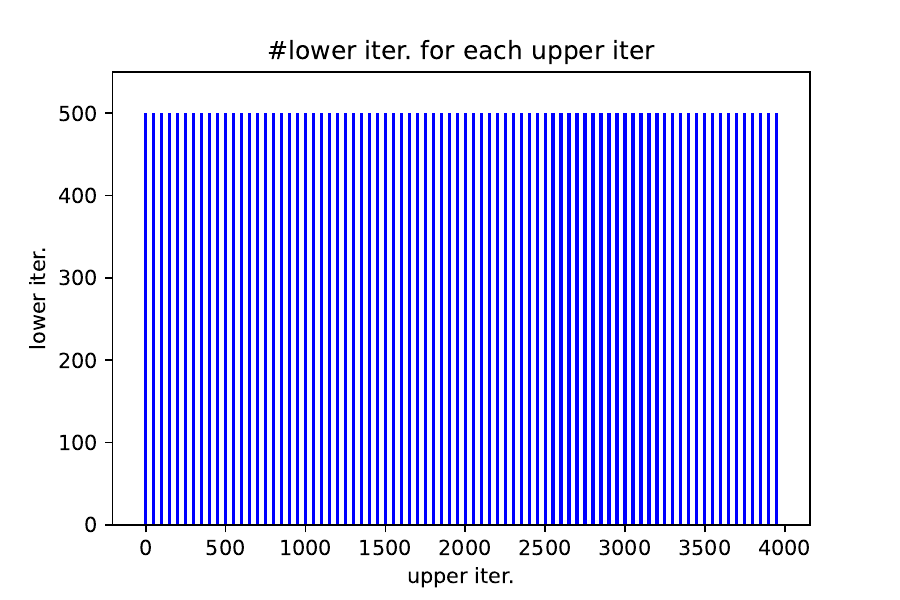}
\includegraphics[trim=1cm 0cm 1.5cm 0.5cm,clip=true,scale=0.36]{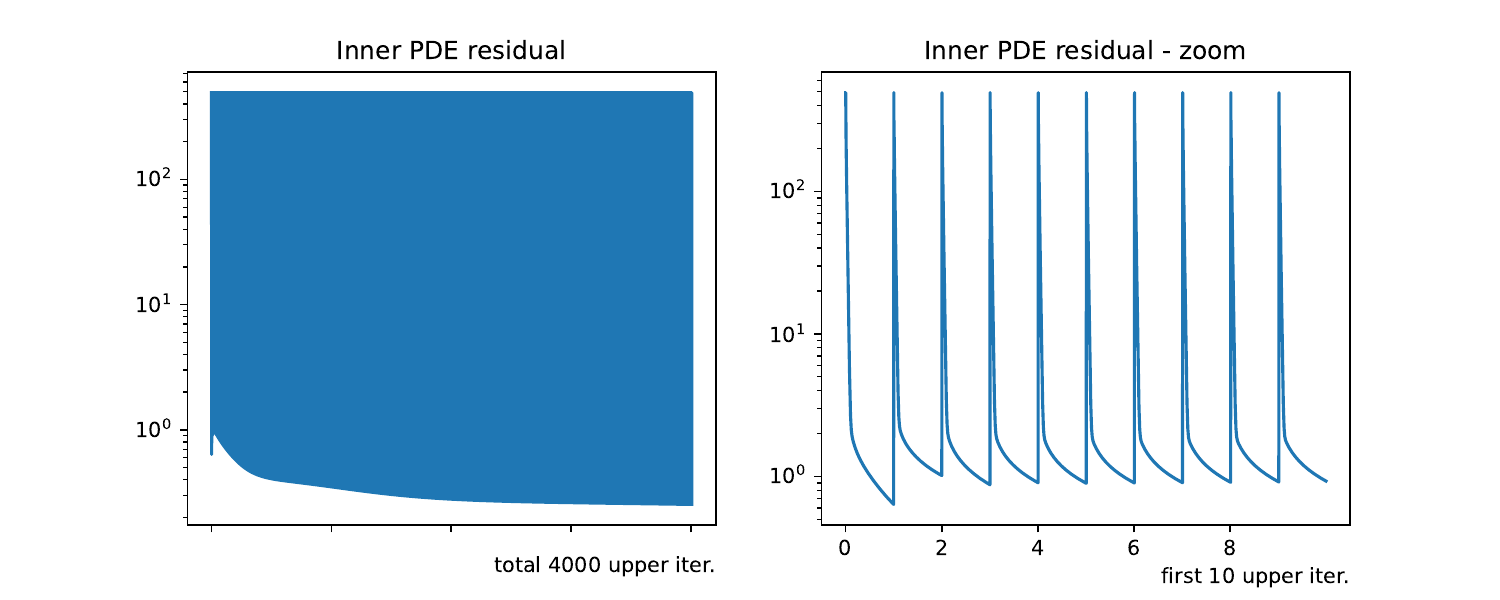}
\vspace{0.5cm}
\includegraphics[trim=1cm 0cm 1.5cm 0.5cm,clip=true,scale=0.36]{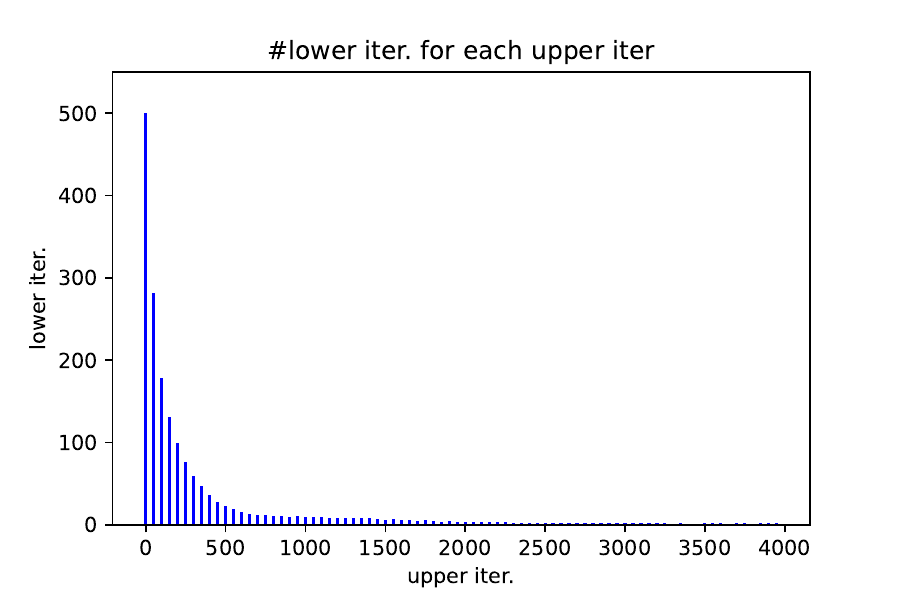}
\includegraphics[trim=1cm 0cm 1.5cm 0.5cm,clip=true,scale=0.36]{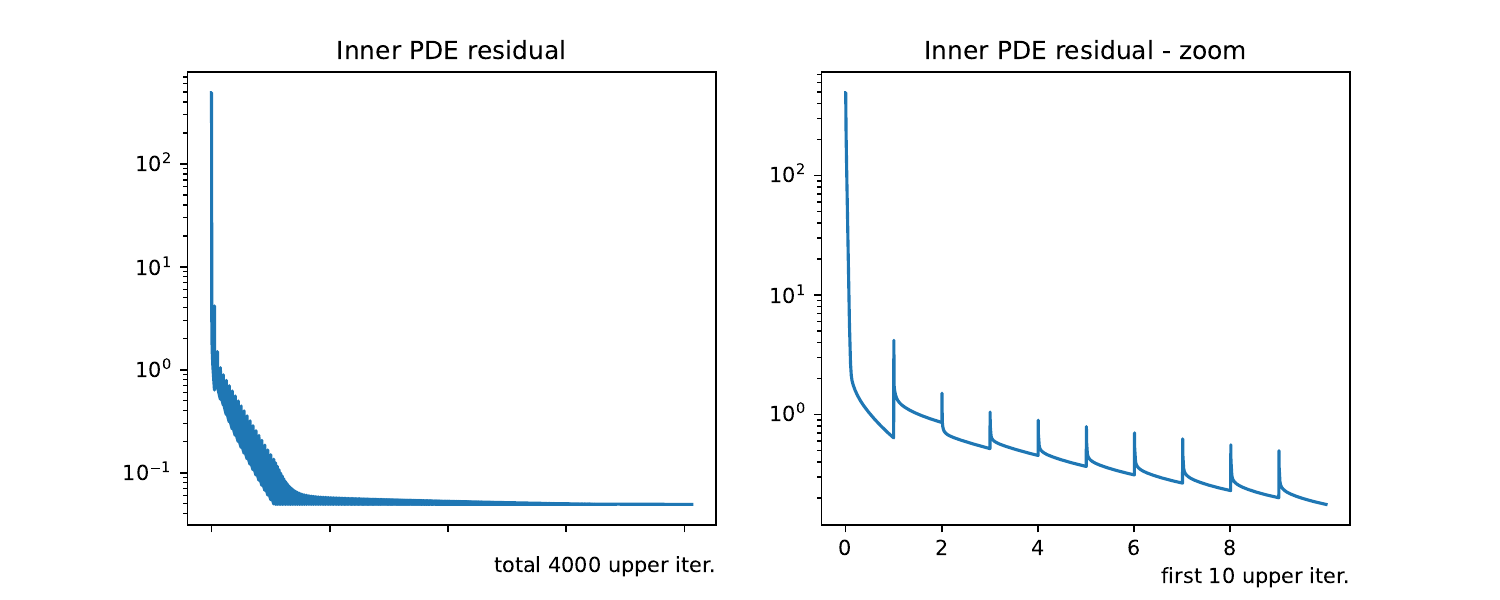}
\caption{Standard (top) vs.~sequential (bottom) bi-level operation. }\label{fig-biseq}
\end{figure}

\begin{figure}[H]
\centering
\includegraphics[trim=3.5cm 0cm 14cm 0.5cm,clip=true,scale=0.4]{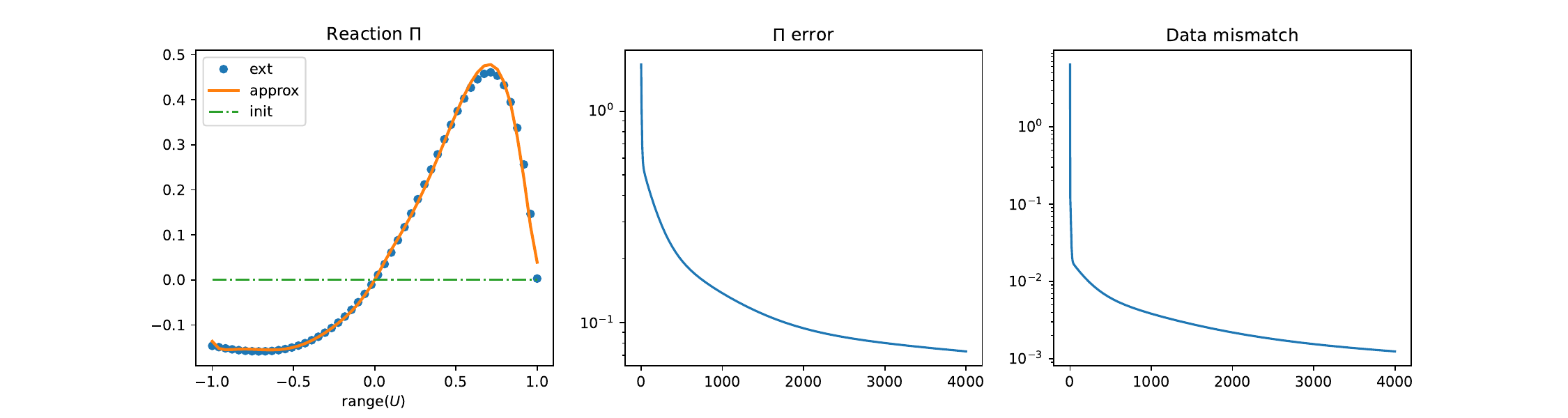}
\includegraphics[trim=3.5cm 0cm 24cm 0.5cm,clip=true,scale=0.4]{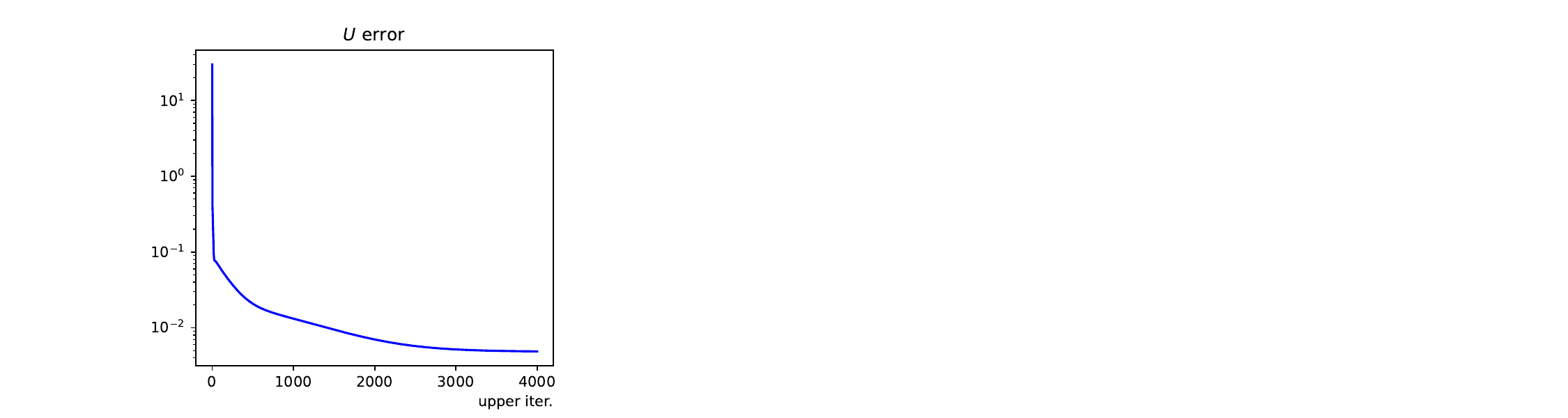}
\vspace{0.5cm}
\includegraphics[trim=3.5cm 0cm 14cm 0.5cm,clip=true,scale=0.4]{figures/fig-file-ex2-final-f}
\includegraphics[trim=3.5cm 0cm 24cm 0.5cm,clip=true,scale=0.4]{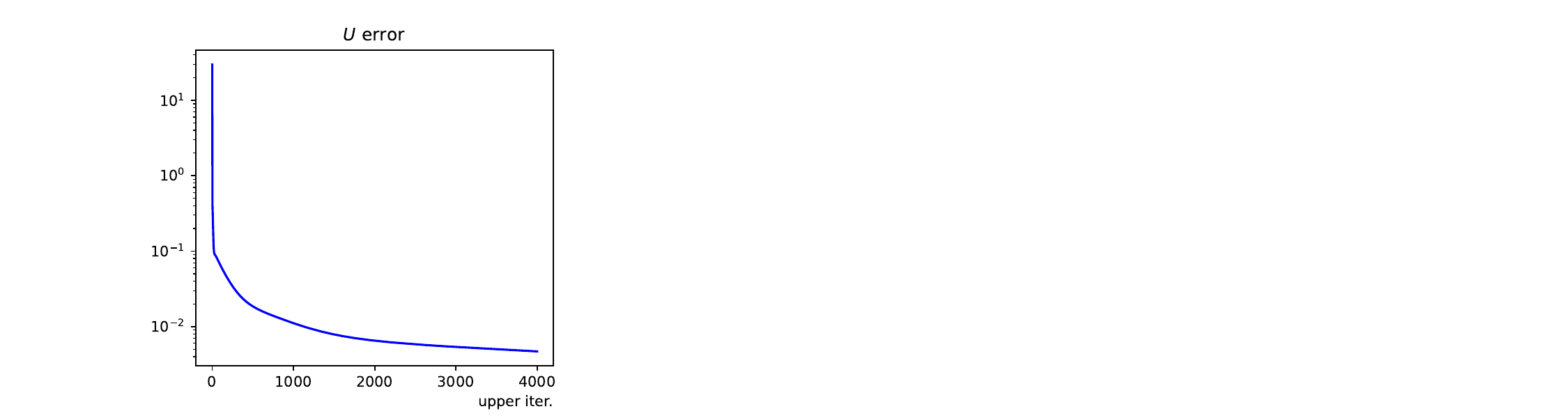}
\caption{Standard (top) vs sequential (bottom) bi-level reconstruction.}\label{fig-biseq-2}
\end{figure}

\begin{figure}[htb!]
\centering
\includegraphics[trim=3.5cm 0cm 3cm 0.5cm,clip=true,scale=0.35]{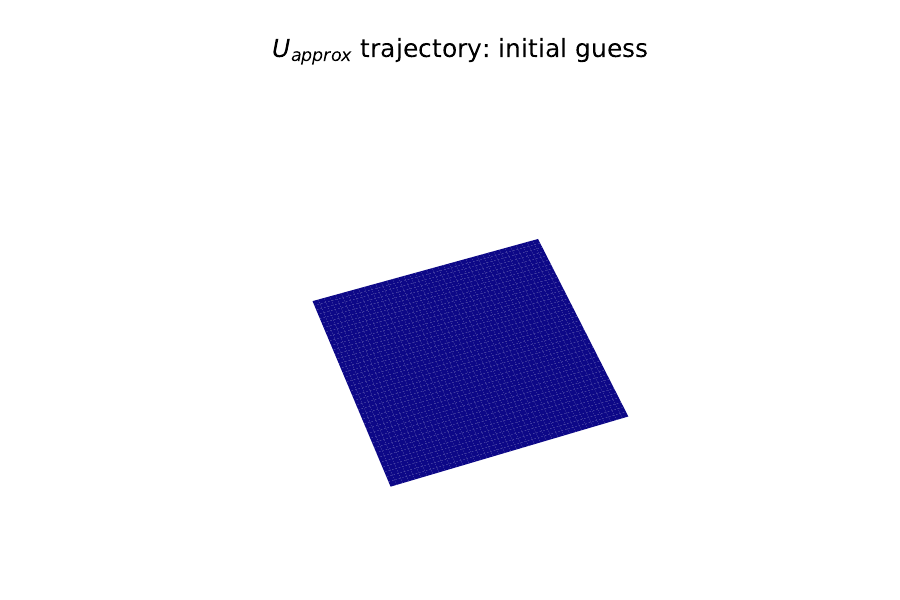}
\includegraphics[trim=3.5cm 0cm 3cm 0.5cm,clip=true,scale=0.35]{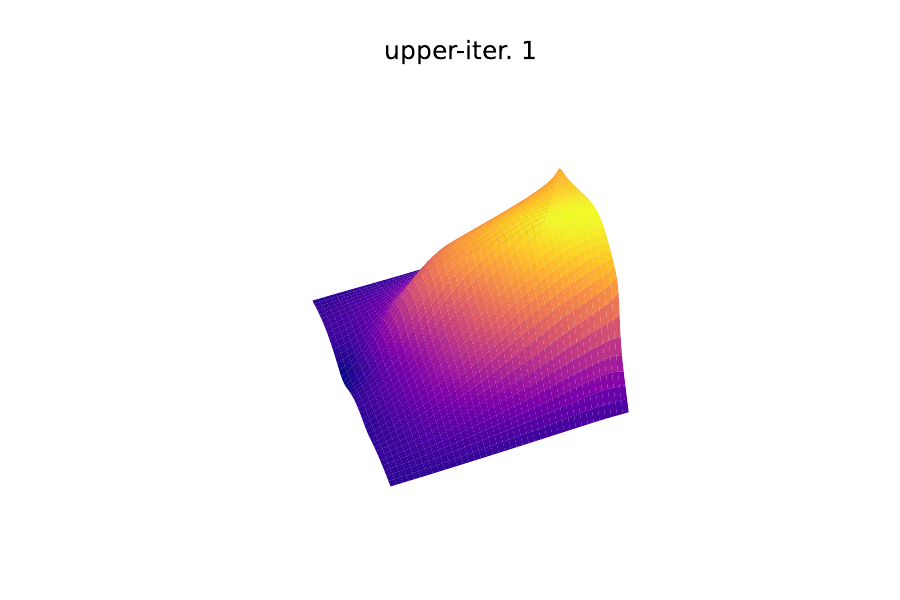}
\includegraphics[trim=3.5cm 0cm 3cm 0.5cm,clip=true,scale=0.35]{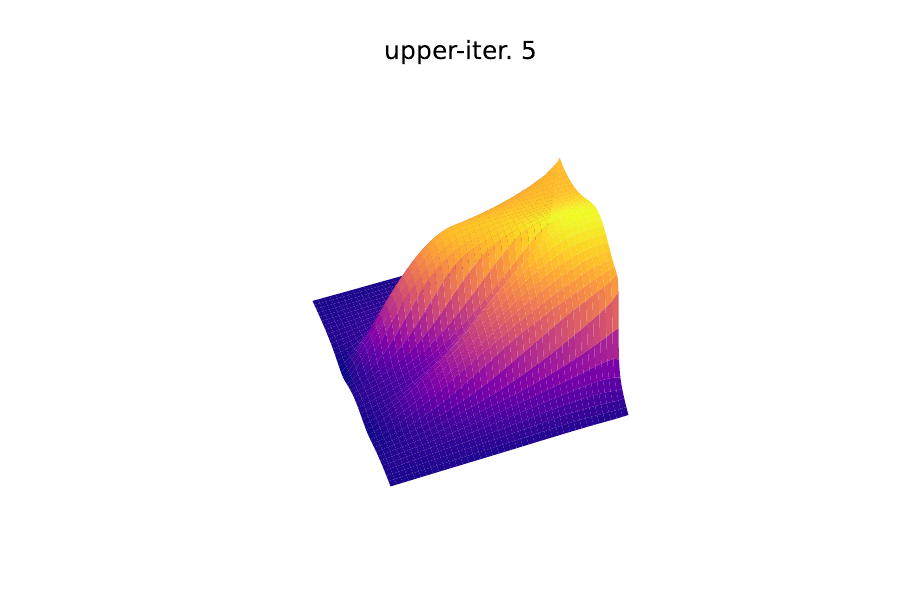}
\includegraphics[trim=3.5cm 0cm 3cm 0.5cm,clip=true,scale=0.35]{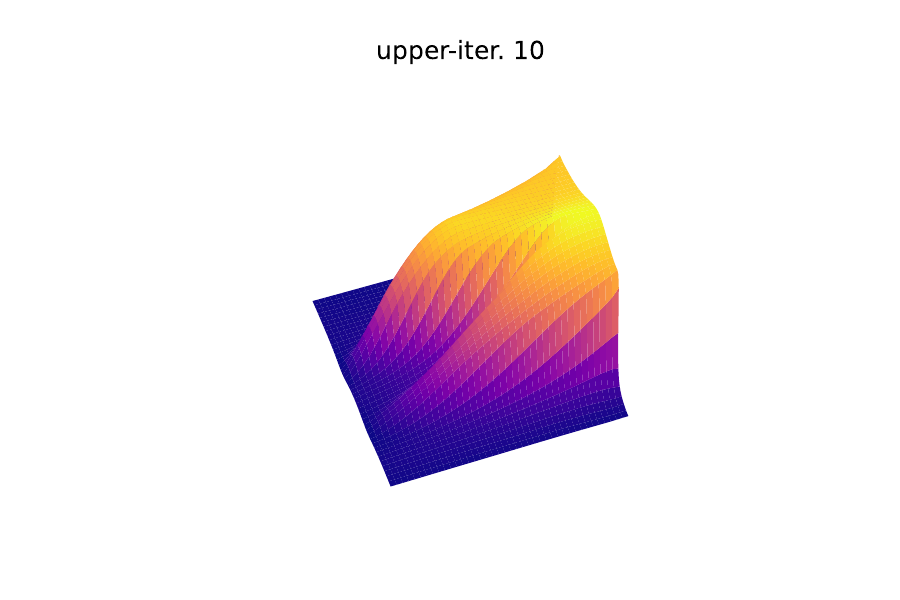}

\includegraphics[trim=3.5cm 0cm 3cm 0.5cm,clip=true,scale=0.35]{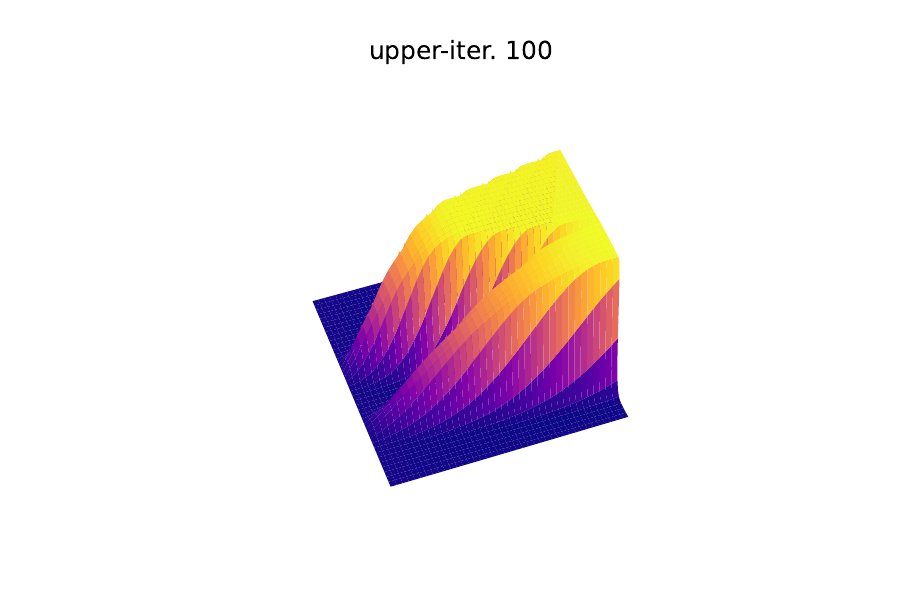}
\includegraphics[trim=3.5cm 0cm 3cm 0.5cm,clip=true,scale=0.35]{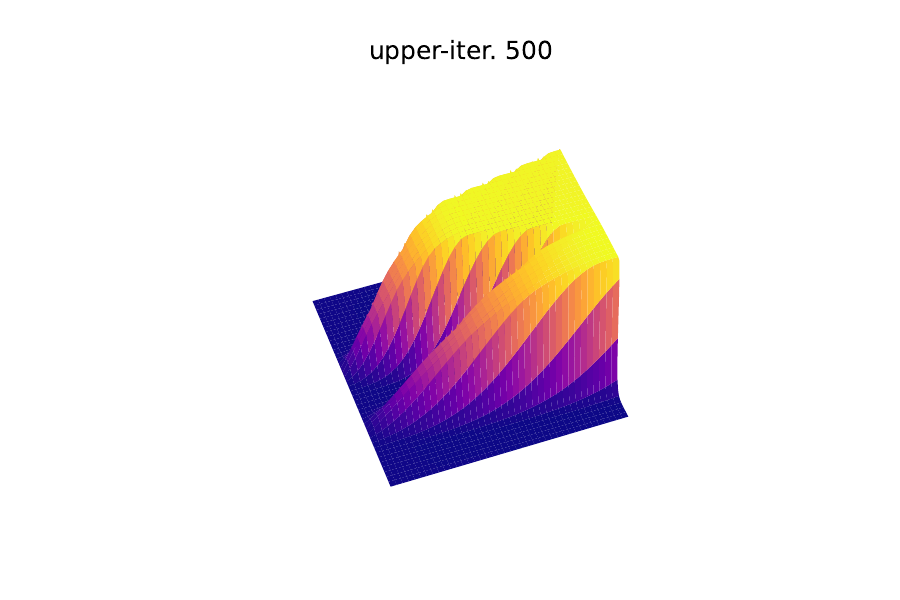}
\includegraphics[trim=3.5cm 0cm 3cm 0.5cm,clip=true,scale=0.35]{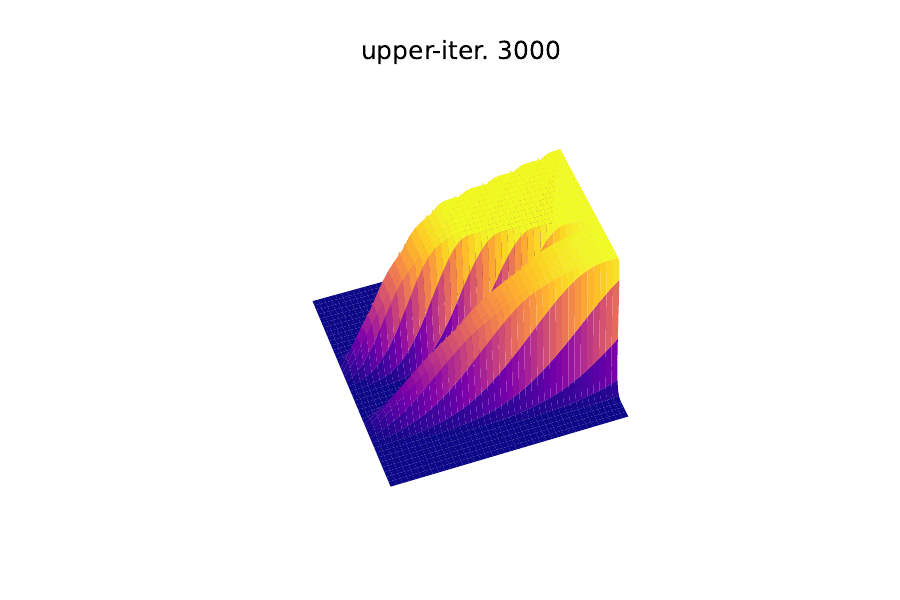}
\includegraphics[trim=3.5cm 0cm 3cm 0.5cm,clip=true,scale=0.35]{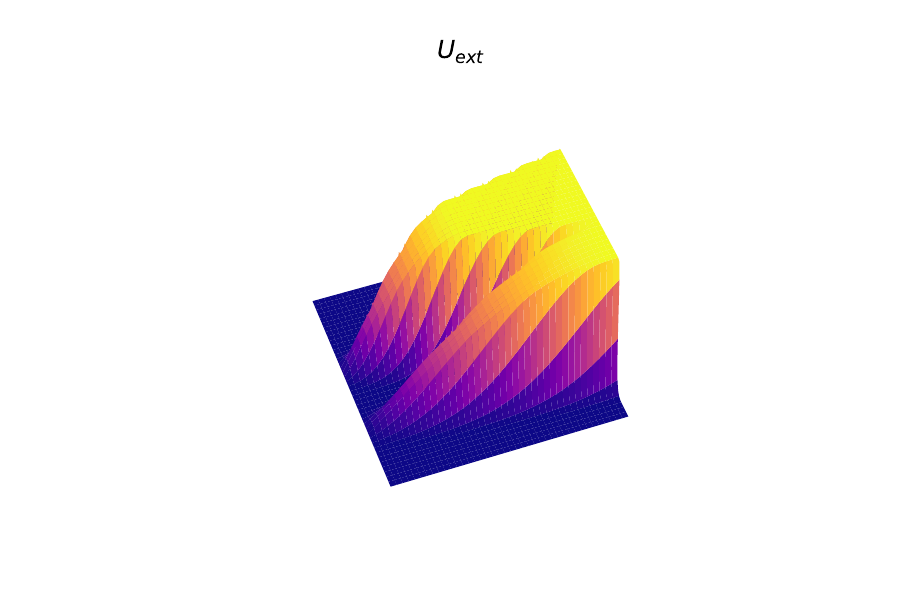}

\caption{Fisher -- State trajectory. Each lower-level ran 10 iterations.}\label{fig-traject-U}
\end{figure}

\begin{figure}[H]
\centering
\includegraphics[trim=2.8cm 0cm 1.7cm 0cm,clip=true,scale=0.35]{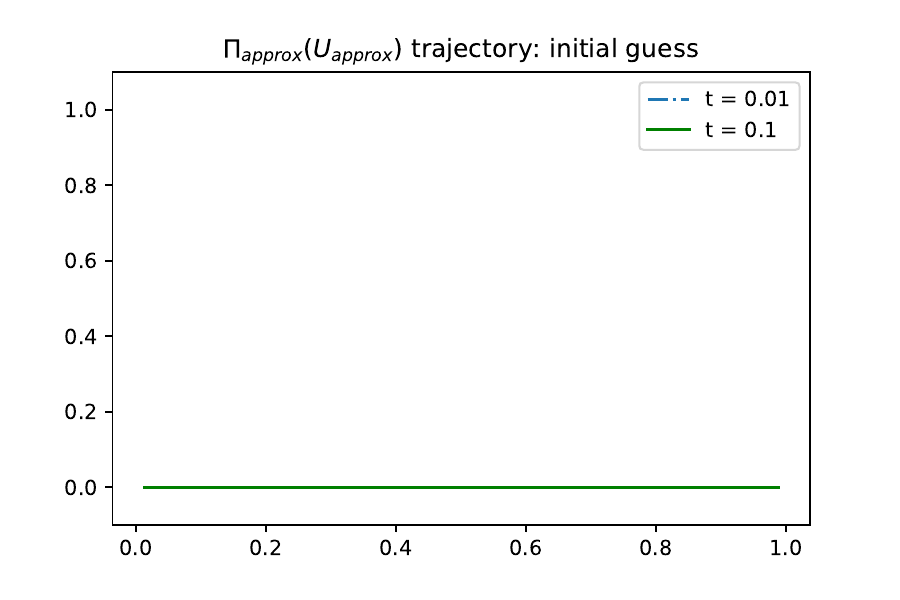}
\includegraphics[trim=2.8cm 0cm 2.5cm 0cm,clip=true,scale=0.35]{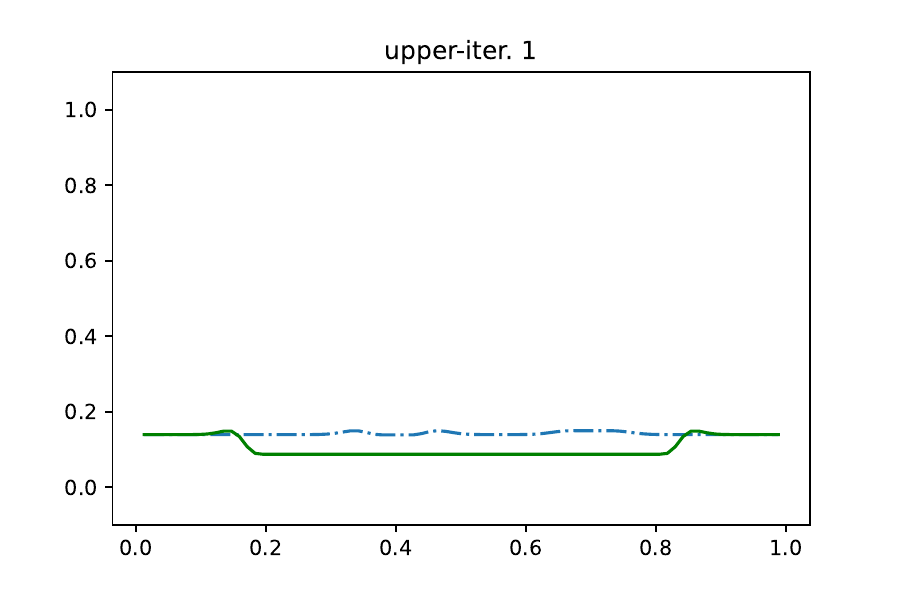}
\includegraphics[trim=2.8cm 0cm 2.5cm 0cm,clip=true,scale=0.35]{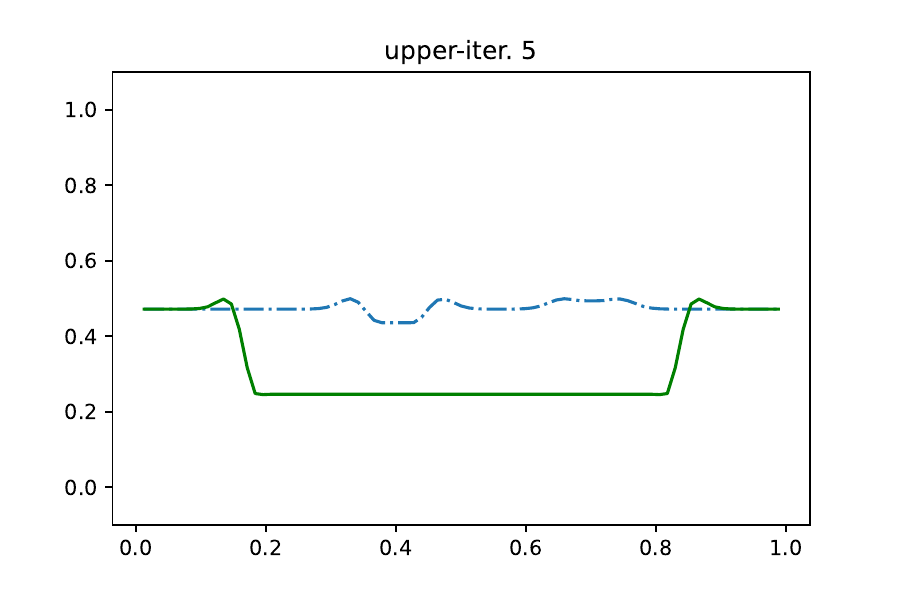}

\includegraphics[trim=2.8cm 0cm 2.5cm 0cm,clip=true,scale=0.35]{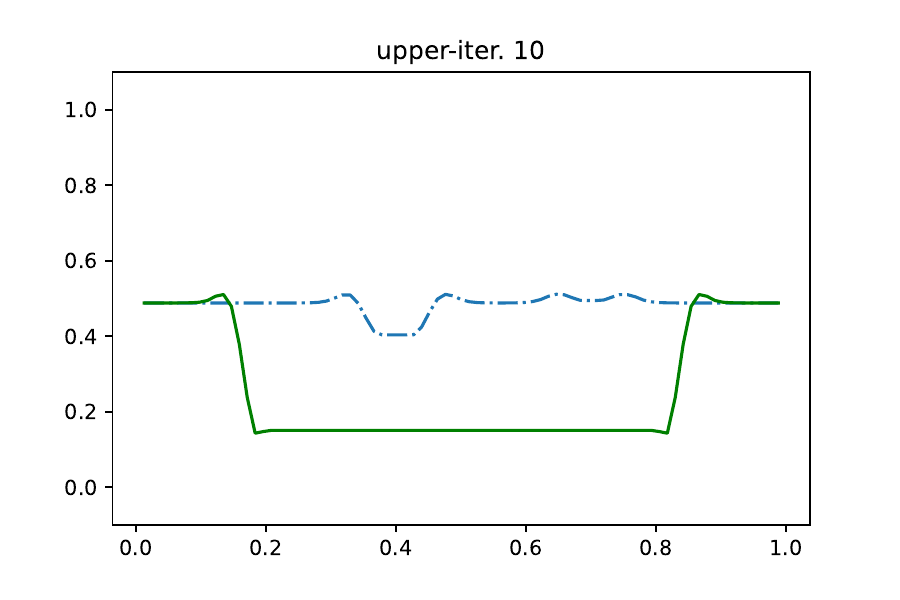}
\includegraphics[trim=2.8cm 0cm 2.5cm 0cm,clip=true,scale=0.35]{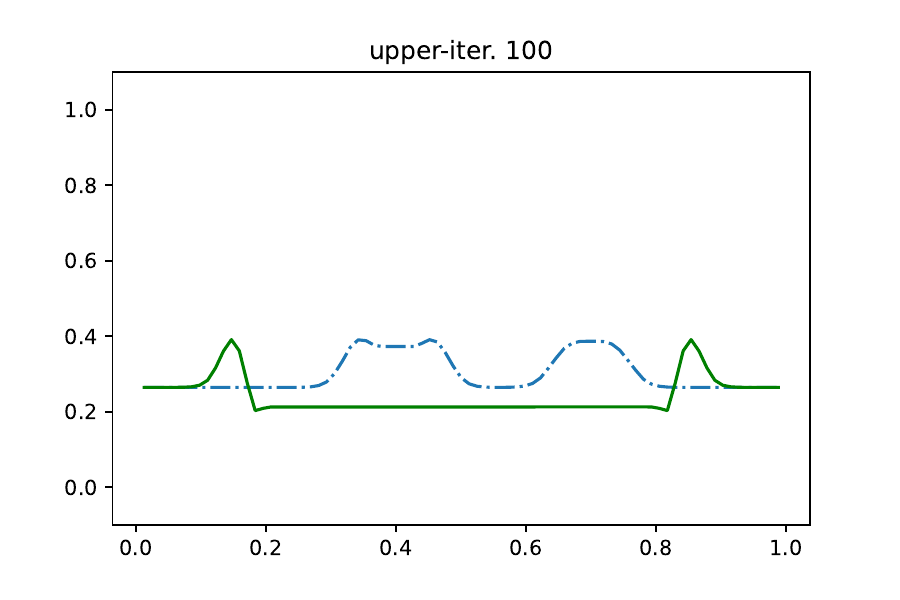}
\includegraphics[trim=2.8cm 0cm 2.5cm 0cm,clip=true,scale=0.35]{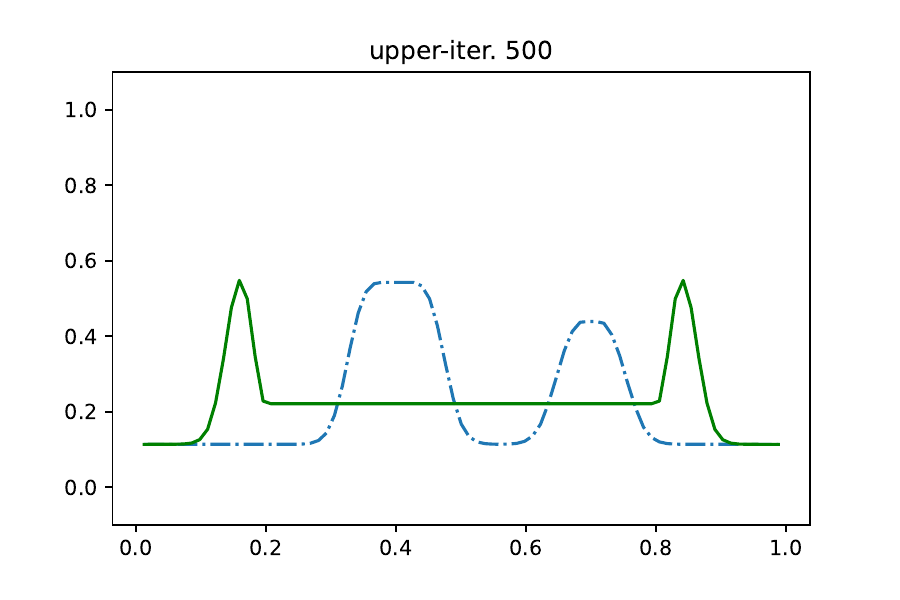}

\includegraphics[trim=2.8cm 0cm 2.5cm 0cm,clip=true,scale=0.35]{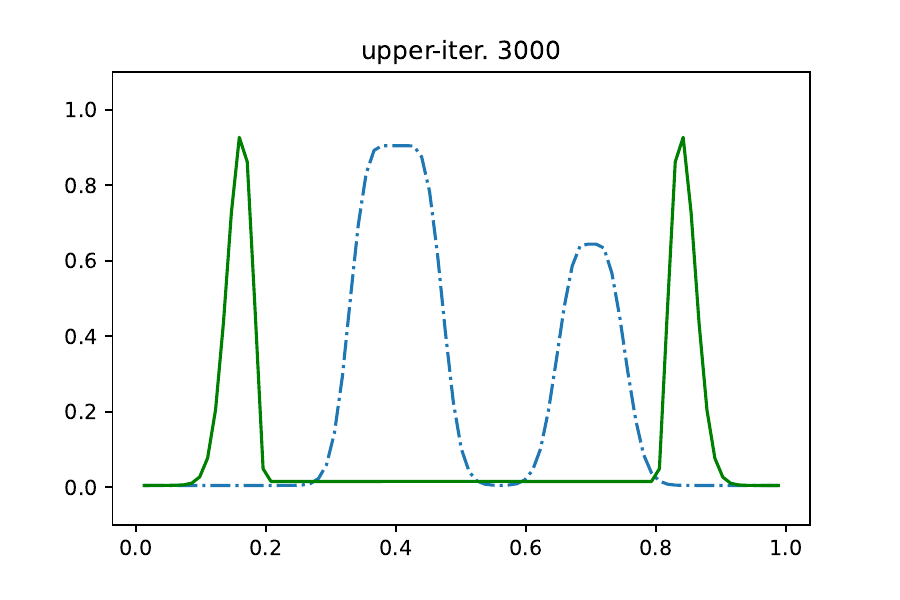}
\includegraphics[trim=2.8cm 0cm 2.5cm 0cm,clip=true,scale=0.35]{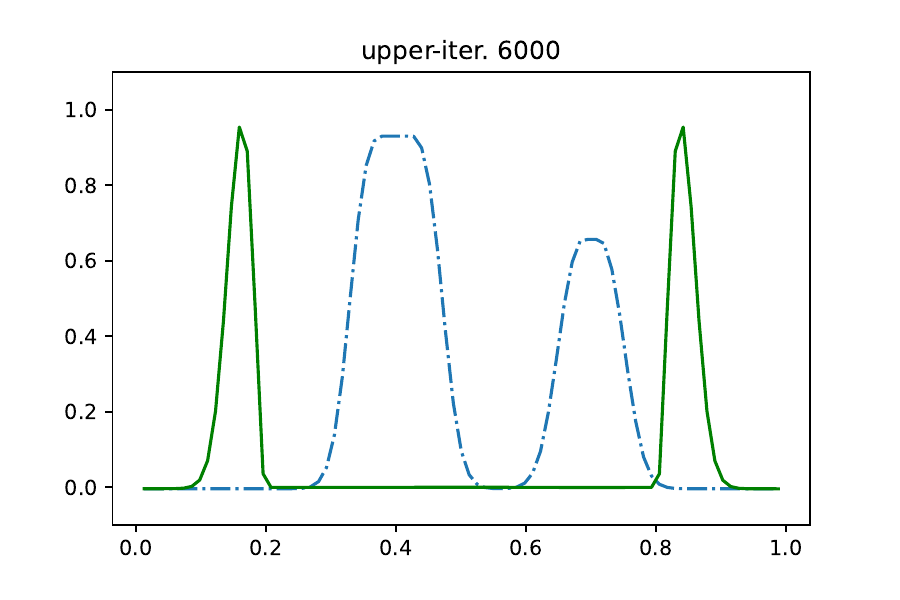}
\includegraphics[trim=2.8cm 0cm 2.5cm 0cm,clip=true,scale=0.35]{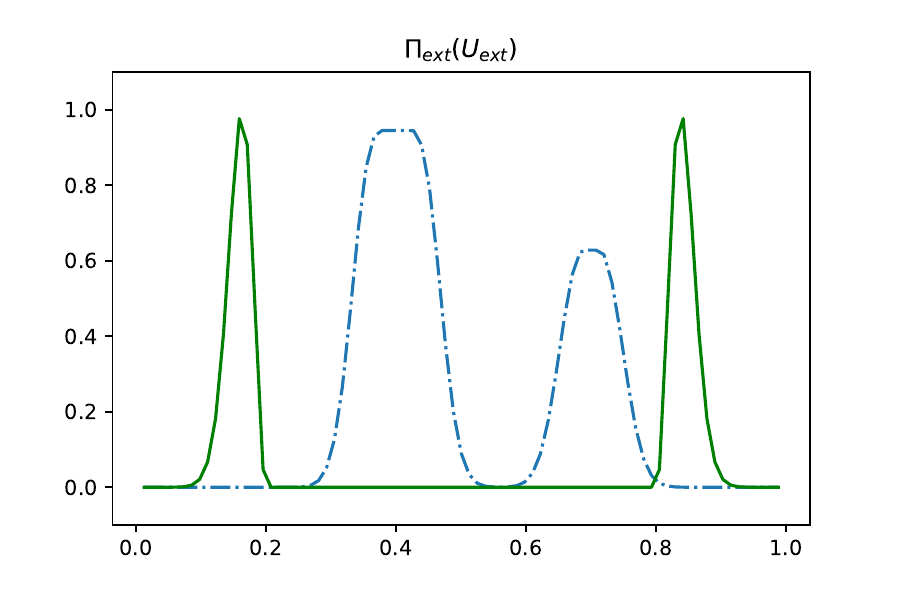}

\caption{Reaction applied to state trajectory. Plot two time moments: $t=0.01, 0.1$.}\label{fig-traject-f}
\end{figure}

\noindent\textbf{Lower-level trajectory. }In Fig.~\ref{fig-traject-U}, we demonstrate the connection between the sequential bi-level Algorithm  to the incremental load method, as remarked in Discussion \ref{dis:incremental}. To this end, we study the Fisher equation with terminal time data, collecting the output state from each lower-level. Starting from a zero initial guess (top-left panel), the intermediate state $u^j$ gradually evolves with $j$, and obtains the correct shape of the exact state (bottom-right) after 500 upper-iterations. Recall that each lower-level produces an approximate state associated to the incremental reaction $\Pi^j\neq\Pi^\dagger$. As the upper-iterate converges to $\Pi^\dagger$, these intermediate states are driven towards $u^\dagger=S(\Pi^\dagger)$, verifying the analysis in Section \ref{sec:lowe-trajectory}. In the same vein, Fig.~\ref{fig-traject-f} shows the evolution of $\Pi^j$  applied to snapshots of $u^j_{\kappa(j)}$.

\section{Outlook}\label{sec:outlook}

In this work, we have developed the sequential bi-level algorithm, an accelerated version of the original scheme introduced in \cite{nguyen24}. Several aspects have been explored: regularization effect, multi-scale effect, acceleration effect and incremental lower-level trajectories. Our proposed bi-level algorithm is designed for, but not limited to, inverse problems with nonlinear time-dependent PDEs. The application to reaction-diffusion systems demonstrates the universality of the bi-level schemes in a wide range of practical applications. Going forward, we intend to expand this study in the following directions:

\begin{itemize}
\item Thus far, our analysis has required the usage of Landweber regularization in both levels. Substituting Landweber for its Nestorov accelerated version \cite{Neubauer17,HubmerRamlau17} could lead to a dramatically improved acceleration effect in the sequential bi-level Algorithm \ref{algorithm3}; \blue{see Remark \ref{rem:generalization}}.
\item A key contribution of this article is the treatment of the stopping rules. One particularly interesting question is how to optimally balance the upper-stopping index $\jmax$ against the lower-stopping indices $\kappa(j)$, $j\leq \jmax$ in such a manner that the overall number $\sum_{i=1}^{\jmax}\kappa(j)$ of total iterations carried out by Algorithm \ref{algorithm3} is minimized. 

\item \blue{Obtaining convergence rates with respect to noise requires regularity of the ground truth, in the form of (variational) source conditions, as the problem is ill posed. Formally establishing these relationships for the bi-level method is an important question that merits further investigation.}

\item 
\blue{Another path towards optimality is the study of \emph{one-shot} methods \cite{Taasan91, MarcellaMarcellaVu}, where only a single iteration takes place in each lower-level step. This phenomenon has been partially observed in Figure 3 for the sequential version; if successful, this has the potential to provide extreme computational speedups.}

\item On the practical side, we plan to carry out a thorough numerical comparison of the three inversion approaches: reduced, all-at-once and bi-level, \blue{taking various observation strategies into account}. Target applications include hyper-elasticity, see Discussion \ref{dis:incremental}, and nonlinear waves in helioseismology.

\end{itemize}

The author also wishes to establish the tangential cone condition for infinite dimensional unknown especially with restricted measurements, being known so far an unsolved question.

%%===========================================================================================%%
%% If you are submitting to one of the Nature Portfolio journals, using the eJP submission   %%
%% system, please include the references within the manuscript file itself. You may do this  %%
%% by copying the reference list from your .bbl file, paste it into the main manuscript .tex %%
%% file, and delete the associated \verb+\bibliography+ commands.                            %%
%%===========================================================================================%%
\bmhead{Acknowledgements} The author indebted to Christian Aarset for thoroughly proofreading the manuscript. The author wishes to thank the reviewers for their insightful suggestions, leading to an improvement of the manuscript. The author acknowledges support from the DFG through Grant 432680300 - SFB 1456 (C04).

%\printbibliography
\bibliography{lit}% common bib file

%% BioMed_Central_Bib_Style_v1.01

\begin{thebibliography}{72}
% BibTex style file: bmc-mathphys.bst (version 2.1), 2014-07-24
\ifx \bisbn   \undefined \def \bisbn  #1{ISBN #1}\fi
\ifx \binits  \undefined \def \binits#1{#1}\fi
\ifx \bauthor  \undefined \def \bauthor#1{#1}\fi
\ifx \batitle  \undefined \def \batitle#1{#1}\fi
\ifx \bjtitle  \undefined \def \bjtitle#1{#1}\fi
\ifx \bvolume  \undefined \def \bvolume#1{\textbf{#1}}\fi
\ifx \byear  \undefined \def \byear#1{#1}\fi
\ifx \bissue  \undefined \def \bissue#1{#1}\fi
\ifx \bfpage  \undefined \def \bfpage#1{#1}\fi
\ifx \blpage  \undefined \def \blpage #1{#1}\fi
\ifx \burl  \undefined \def \burl#1{\textsf{#1}}\fi
\ifx \doiurl  \undefined \def \doiurl#1{\url{https://doi.org/#1}}\fi
\ifx \betal  \undefined \def \betal{\textit{et al.}}\fi
\ifx \binstitute  \undefined \def \binstitute#1{#1}\fi
\ifx \binstitutionaled  \undefined \def \binstitutionaled#1{#1}\fi
\ifx \bctitle  \undefined \def \bctitle#1{#1}\fi
\ifx \beditor  \undefined \def \beditor#1{#1}\fi
\ifx \bpublisher  \undefined \def \bpublisher#1{#1}\fi
\ifx \bbtitle  \undefined \def \bbtitle#1{#1}\fi
\ifx \bedition  \undefined \def \bedition#1{#1}\fi
\ifx \bseriesno  \undefined \def \bseriesno#1{#1}\fi
\ifx \blocation  \undefined \def \blocation#1{#1}\fi
\ifx \bsertitle  \undefined \def \bsertitle#1{#1}\fi
\ifx \bsnm \undefined \def \bsnm#1{#1}\fi
\ifx \bsuffix \undefined \def \bsuffix#1{#1}\fi
\ifx \bparticle \undefined \def \bparticle#1{#1}\fi
\ifx \barticle \undefined \def \barticle#1{#1}\fi
\bibcommenthead
\ifx \bconfdate \undefined \def \bconfdate #1{#1}\fi
\ifx \botherref \undefined \def \botherref #1{#1}\fi
\ifx \url \undefined \def \url#1{\textsf{#1}}\fi
\ifx \bchapter \undefined \def \bchapter#1{#1}\fi
\ifx \bbook \undefined \def \bbook#1{#1}\fi
\ifx \bcomment \undefined \def \bcomment#1{#1}\fi
\ifx \oauthor \undefined \def \oauthor#1{#1}\fi
\ifx \citeauthoryear \undefined \def \citeauthoryear#1{#1}\fi
\ifx \endbibitem  \undefined \def \endbibitem {}\fi
\ifx \bconflocation  \undefined \def \bconflocation#1{#1}\fi
\ifx \arxivurl  \undefined \def \arxivurl#1{\textsf{#1}}\fi
\csname PreBibitemsHook\endcsname

%%% 1
\bibitem[\protect\citeauthoryear{Nguyen}{2024}]{nguyen24}
\begin{barticle}
\bauthor{\bsnm{Nguyen}, \binits{T.T.N.}}:
\batitle{{Bi-level iterative regularization for inverse problems in nonlinear
  PDEs}}.
\bjtitle{Inverse Problems}
\bvolume{40}(\bissue{4}),
\bfpage{045020}
(\byear{2024})
\doiurl{10.1088/1361-6420/ad2905}
\end{barticle}
\endbibitem

%%% 2
\bibitem[\protect\citeauthoryear{Kirsch}{2011}]{Kirsch}
\begin{bbook}
\bauthor{\bsnm{Kirsch}, \binits{A.}}:
\bbtitle{{An Introduction to the Mathematical Theory of Inverse Problems}}.
\bpublisher{Springer},
\blocation{New York, NY}
(\byear{2011}).
\doiurl{10.1007/978-1-4419-8474-6}
\end{bbook}
\endbibitem

%%% 3
\bibitem[\protect\citeauthoryear{Hadamard}{1902}]{Hadamard}
\begin{botherref}
\oauthor{\bsnm{Hadamard}, \binits{J.}}:
Sur les problèmes aux dérivées partielles et leur signification physique.
Princeton University Bulletin,
49--52
(1902)
\end{botherref}
\endbibitem

%%% 4
\bibitem[\protect\citeauthoryear{Tikhonov and Arsenin.}{1977}]{Tikhonov}
\begin{bbook}
\bauthor{\bsnm{Tikhonov}, \binits{A.}},
\bauthor{\bsnm{Arsenin.}, \binits{V.}}:
\bbtitle{{Solutions of ill-Posed Problems}}.
\bpublisher{V. H. Winston \& Sons},
\blocation{Washington, D.C., New York}
(\byear{1977})
\end{bbook}
\endbibitem

%%% 5
\bibitem[\protect\citeauthoryear{Engl et~al.}{1996}]{EHNBuch}
\begin{bbook}
\bauthor{\bsnm{Engl}, \binits{H.W.}},
\bauthor{\bsnm{Hanke}, \binits{M.}},
\bauthor{\bsnm{Neubauer}, \binits{A.}}:
\bbtitle{Regularization of Inverse Problems}.
\bpublisher{Kluwer},
\blocation{Dordrecht}
(\byear{1996})
\end{bbook}
\endbibitem

%%% 6
\bibitem[\protect\citeauthoryear{Kunisch and Sachs}{1992}]{KunischSachs}
\begin{barticle}
\bauthor{\bsnm{Kunisch}, \binits{K.}},
\bauthor{\bsnm{Sachs}, \binits{E.W.}}:
\batitle{Reduced {SQP} methods for parameter identification problems}.
\bjtitle{SIAM Journal on Numerical Analysis}
\bvolume{29}(\bissue{6}),
\bfpage{1793}--\blpage{1820}
(\byear{1992})
\doiurl{10.1137/0729100}
{\href{https://arxiv.org/abs/http://dx.doi.org/10.1137/0729100}{{http://dx.doi.org/10.1137/0729100}}}
\end{barticle}
\endbibitem

%%% 7
\bibitem[\protect\citeauthoryear{Burger and
  M\"uhlhuber}{2002}]{BurgerMuehlhuberIP}
\begin{barticle}
\bauthor{\bsnm{Burger}, \binits{M.}},
\bauthor{\bsnm{M\"uhlhuber}, \binits{W.}}:
\batitle{Iterative regularization of parameter identification problems by
  sequential quadratic programming methods}.
\bjtitle{Inverse Problems}
\bvolume{18},
\bfpage{943}--\blpage{969}
(\byear{2002})
\end{barticle}
\endbibitem

%%% 8
\bibitem[\protect\citeauthoryear{Haber and Ascher}{2001}]{HaAs01}
\begin{barticle}
\bauthor{\bsnm{Haber}, \binits{E.}},
\bauthor{\bsnm{Ascher}, \binits{U.M.}}:
\batitle{Preconditioned all-at-once methods for large, sparse parameter
  estimation problems}.
\bjtitle{Inverse Problems}
\bvolume{17}(\bissue{6}),
\bfpage{1847}
(\byear{2001})
\end{barticle}
\endbibitem

%%% 9
\bibitem[\protect\citeauthoryear{van Leeuwen and Herrmann}{2016}]{LeHe16}
\begin{barticle}
\bauthor{\bsnm{Leeuwen}, \binits{T.}},
\bauthor{\bsnm{Herrmann}, \binits{F.J.}}:
\batitle{A penalty method for {PDE}-constrained optimization in inverse
  problems}.
\bjtitle{Inverse Problems}
\bvolume{32}(\bissue{1}),
\bfpage{015007}
(\byear{2016})
\end{barticle}
\endbibitem

%%% 10
\bibitem[\protect\citeauthoryear{Orozco and Ghattas}{1997}]{orozco-ghattas-97b}
\begin{barticle}
\bauthor{\bsnm{Orozco}, \binits{C.E.}},
\bauthor{\bsnm{Ghattas}, \binits{O.N.}}:
\batitle{A reduced {SAND} method for optimal design of non-linear structures}.
\bjtitle{International Journal for Numerical Methods in Engineering}
\bvolume{40}(\bissue{15}),
\bfpage{2759}--\blpage{2774}
(\byear{1997})
\end{barticle}
\endbibitem

%%% 11
\bibitem[\protect\citeauthoryear{Shenoy
  et~al.}{1998}]{shenoy-heinkenschloss-cliff-98}
\begin{barticle}
\bauthor{\bsnm{Shenoy}, \binits{A.R.}},
\bauthor{\bsnm{Heinkenschloss}, \binits{M.}},
\bauthor{\bsnm{Cliff}, \binits{E.M.}}:
\batitle{Airfoil design by an all-at-once method}.
\bjtitle{International Journal for Computational Fluid Mechanics}
\bvolume{11},
\bfpage{3}--\blpage{25}
(\byear{1998})
\end{barticle}
\endbibitem

%%% 12
\bibitem[\protect\citeauthoryear{Ta'asan}{1991}]{Taasan91}
\begin{botherref}
\oauthor{\bsnm{Ta'asan}, \binits{S.}}:
{\enquote{One Shot} methods for optimal control of distributed parameter
  systems I: Finite dimensional control}.
Technical report,
Institute for Computer Applications in Science and Engineering : NASA Langley
  Research Center
(1991)
\end{botherref}
\endbibitem

%%% 13
\bibitem[\protect\citeauthoryear{Kaltenbacher}{2017}]{Kaltenbacher:17}
\begin{botherref}
\oauthor{\bsnm{Kaltenbacher}, \binits{B.}}:
All-at-once versus reduced iterative methods for time dependent inverse
  problems.
Inverse Problems
\textbf{33}
(2017)
\end{botherref}
\endbibitem

%%% 14
\bibitem[\protect\citeauthoryear{Nguyen}{2019}]{Nguyen:19}
\begin{barticle}
\bauthor{\bsnm{Nguyen}, \binits{T.T.N.}}:
\batitle{{Landweber{\textendash}Kaczmarz for parameter identification in
  time-dependent inverse problems: all-at-once versus reduced version}}.
\bjtitle{Inverse Problems}
\bvolume{35}(\bissue{3}),
\bfpage{035009}
(\byear{2019})
\doiurl{10.1088/1361-6420/aaf9ba}
\end{barticle}
\endbibitem

%%% 15
\bibitem[\protect\citeauthoryear{Clason et~al.}{2016}]{Clason_2016}
\begin{barticle}
\bauthor{\bsnm{Clason}, \binits{C.}},
\bauthor{\bsnm{Kaltenbacher}, \binits{B.}},
\bauthor{\bsnm{Wachsmuth}, \binits{D.}}:
\batitle{Functional error estimators for the adaptive discretization of inverse
  problems}.
\bjtitle{Inverse Problems}
\bvolume{32}(\bissue{10}),
\bfpage{104004}
(\byear{2016})
\doiurl{10.1088/0266-5611/32/10/104004}
\end{barticle}
\endbibitem

%%% 16
\bibitem[\protect\citeauthoryear{Kaltenbacher et~al.}{2011}]{Kaltenbacher_2011}
\begin{barticle}
\bauthor{\bsnm{Kaltenbacher}, \binits{B.}},
\bauthor{\bsnm{Kirchner}, \binits{A.}},
\bauthor{\bsnm{Vexler}, \binits{B.}}:
\batitle{{Adaptive discretizations for the choice of a Tikhonov regularization
  parameter in nonlinear inverse problems}}.
\bjtitle{Inverse Problems}
\bvolume{27}(\bissue{12}),
\bfpage{125008}
(\byear{2011})
\doiurl{10.1088/0266-5611/27/12/125008}
\end{barticle}
\endbibitem

%%% 17
\bibitem[\protect\citeauthoryear{Jin and Zhou}{2021}]{JinZhou}
\begin{barticle}
\bauthor{\bsnm{Jin}, \binits{B.}},
\bauthor{\bsnm{Zhou}, \binits{Z.}}:
\batitle{Error analysis of finite element approximations of diffusion
  coefficient identification for elliptic and parabolic problems}.
\bjtitle{SIAM Journal on Numerical Analysis}
\bvolume{59}(\bissue{1}),
\bfpage{119}--\blpage{142}
(\byear{2021})
\doiurl{10.1137/20M134383X}
{\href{https://arxiv.org/abs/https://doi.org/10.1137/20M134383X}{{https://doi.org/10.1137/20M134383X}}}
\end{barticle}
\endbibitem

%%% 18
\bibitem[\protect\citeauthoryear{Hinze and Quyen}{2019}]{Hinze_2019}
\begin{barticle}
\bauthor{\bsnm{Hinze}, \binits{M.}},
\bauthor{\bsnm{Quyen}, \binits{T.N.T.}}:
\batitle{{Finite element approximation of source term identification with
  TV-regularization}}.
\bjtitle{Inverse Problems}
\bvolume{35}(\bissue{12}),
\bfpage{124004}
(\byear{2019})
\doiurl{10.1088/1361-6420/ab3478}
\end{barticle}
\endbibitem

%%% 19
\bibitem[\protect\citeauthoryear{Rieder}{2021}]{Rieder}
\begin{barticle}
\bauthor{\bsnm{Rieder}, \binits{A.}}:
\batitle{{An all-at-once approach to full waveform seismic inversion in the
  viscoelastic regime}}.
\bjtitle{Math. Meth. Appl. Sci.}
\bvolume{44(8)},
\bfpage{6376}--\blpage{6388}
(\byear{2021})
\end{barticle}
\endbibitem

%%% 20
\bibitem[\protect\citeauthoryear{Kaltenbacher et~al.}{2021}]{KNSW}
\begin{bbook}
\bauthor{\bsnm{Kaltenbacher}, \binits{B.}},
\bauthor{\bsnm{Nguyen}, \binits{T.T.N.}},
\bauthor{\bsnm{Wald}, \binits{A.}},
\bauthor{\bsnm{Schuster}, \binits{T.}}:
\bbtitle{Parameter Identification for the {L}andau-{L}ifshitz-{G}ilbert
  Equation in Magnetic Particle Imaging},
pp. \bfpage{377}--\blpage{412}.
\bpublisher{Springer},
\blocation{Switzerland}
(\byear{2021}).
\doiurl{10.1007/978-3-030-57784-1_13}
\end{bbook}
\endbibitem

%%% 21
\bibitem[\protect\citeauthoryear{Nguyen and Wald}{2022}]{NguyenWald:2022}
\begin{botherref}
\oauthor{\bsnm{Nguyen}, \binits{T.T.N.}},
\oauthor{\bsnm{Wald}, \binits{A.}}:
{On numerical aspects of parameter identification for the
  Landau-Lifshitz-Gilbert equation in Magnetic Particle Imaging}.
Inverse Problems and Imaging
\textbf{16}(1)
(2022)
\doiurl{10.3934/ipi.2021042} .
Doi:10.3934/ipi.2021042
\end{botherref}
\endbibitem

%%% 22
\bibitem[\protect\citeauthoryear{Kaltenbacher et~al.}{2021}]{TCC21}
\begin{bbook}
\bauthor{\bsnm{Kaltenbacher}, \binits{B.}},
\bauthor{\bsnm{Nguyen}, \binits{T.T.N.}},
\bauthor{\bsnm{Scherzer}, \binits{O.}}:
\bbtitle{The Tangential Cone Condition for Some Coefficient Identification
  Model Problems in Parabolic {PDEs}},
pp. \bfpage{121}--\blpage{163}.
\bpublisher{Springer},
\blocation{Switzerland}
(\byear{2021}).
\doiurl{10.1007/10.1007/978-3-030-57784-1_5}
\end{bbook}
\endbibitem

%%% 23
\bibitem[\protect\citeauthoryear{Bard}{1998}]{Bard}
\begin{bbook}
\bauthor{\bsnm{Bard}, \binits{J.}}:
\bbtitle{{Practical Bilevel Optimization}}.
\bpublisher{Kluwer Academic Publishers},
\blocation{Dordrecht, The Netherlands}
(\byear{1998})
\end{bbook}
\endbibitem

%%% 24
\bibitem[\protect\citeauthoryear{Raissi}{2018}]{Raissi18}
\begin{barticle}
\bauthor{\bsnm{Raissi}, \binits{M.}}:
\batitle{Deep hidden physics models: Deep learning of nonlinear partial
  differential equations}.
\bjtitle{Journal of Machine Learning Research}
\bvolume{19},
\bfpage{24}
(\byear{2018})
\end{barticle}
\endbibitem

%%% 25
\bibitem[\protect\citeauthoryear{Karumuri
  et~al.}{2019}]{Karumuri2019SimulatorfreeSO}
\begin{botherref}
\oauthor{\bsnm{Karumuri}, \binits{S.}},
\oauthor{\bsnm{Tripathy}, \binits{R.}},
\oauthor{\bsnm{Bilionis}, \binits{I.}},
\oauthor{\bsnm{Panchal}, \binits{J.H.}}:
Simulator-free solution of high-dimensional stochastic elliptic partial
  differential equations using deep neural networks.
J. Comput. Phys.
\textbf{404}
(2019)
\end{botherref}
\endbibitem

%%% 26
\bibitem[\protect\citeauthoryear{{Roub\' i\v cek}}{2013}]{Roubicek}
\begin{bbook}
\bauthor{\bsnm{{Roub\' i\v cek}}, \binits{T.}}:
\bbtitle{Nonlinear Partial Differential Equations with Applications}.
\bpublisher{Springer},
\blocation{Basel}
(\byear{2013})
\end{bbook}
\endbibitem

%%% 27
\bibitem[\protect\citeauthoryear{Tr\"oltzsch}{2010}]{Troeltzsch}
\begin{bbook}
\bauthor{\bsnm{Tr\"oltzsch}, \binits{F.}}:
\bbtitle{Optimal Control of Partial Differential Equations: Theory, Methods and
  Applications}.
\bpublisher{American Mathematical Society},
\blocation{Providence, Rhode Island}
(\byear{2010})
\end{bbook}
\endbibitem

%%% 28
\bibitem[\protect\citeauthoryear{Adams and Fourier}{2003}]{Adams}
\begin{bbook}
\bauthor{\bsnm{Adams}, \binits{R.A.}},
\bauthor{\bsnm{Fourier}, \binits{J.F.}}:
\bbtitle{Sobolev Spaces}.
\bpublisher{Elsevier},
\blocation{Oxford}
(\byear{2003})
\end{bbook}
\endbibitem

%%% 29
\bibitem[\protect\citeauthoryear{Leoni}{2009}]{Leoni:2009}
\begin{bbook}
\bauthor{\bsnm{Leoni}, \binits{G.}}:
\bbtitle{A First Course in {S}obolev Spaces}.
\bsertitle{Graduate Studies in Mathematics},
vol. \bseriesno{105},
p. \bfpage{607}.
\bpublisher{American Mathematical Society},
\blocation{Providence, RI}
(\byear{2009}).
\doiurl{10.1090/gsm/105} .
\burl{https://doi.org/10.1090/gsm/105}
\end{bbook}
\endbibitem

%%% 30
\bibitem[\protect\citeauthoryear{Evans}{1998}]{Evans}
\begin{bbook}
\bauthor{\bsnm{Evans}, \binits{L.C.}}:
\bbtitle{Partial Differential Equations}.
\bpublisher{Graduate Studies in Mathematics 19. AMS},
\blocation{Providence, RI}
(\byear{1998})
\end{bbook}
\endbibitem

%%% 31
\bibitem[\protect\citeauthoryear{Scherzer}{1995}]{scherzer95}
\begin{botherref}
\oauthor{\bsnm{Scherzer}, \binits{O.}}:
{On Convergence Criteria of Iterative Methods Based on Landweber Iteration for
  Solving Nonlinear Problems}.
Journal of Mathematical Analysis and Applications
\textbf{194}(3)
(1995)
\end{botherref}
\endbibitem

%%% 32
\bibitem[\protect\citeauthoryear{Hubmer and Ramlau}{2017}]{HubmerRamlau17}
\begin{barticle}
\bauthor{\bsnm{Hubmer}, \binits{S.}},
\bauthor{\bsnm{Ramlau}, \binits{R.}}:
\batitle{{Convergence analysis of a two-point gradient method for nonlinear
  ill-posed problems}}.
\bjtitle{Inverse Problems}
\bvolume{33}(\bissue{9}),
\bfpage{095004}
(\byear{2017})
\doiurl{10.1088/1361-6420/aa7ac7}
\end{barticle}
\endbibitem

%%% 33
\bibitem[\protect\citeauthoryear{Klann et~al.}{2006}]{KlannMaassRamlau}
\begin{barticle}
\bauthor{\bsnm{Klann}, \binits{E.}},
\bauthor{\bsnm{Maaß}, \binits{P.}},
\bauthor{\bsnm{Ramlau}, \binits{R.}}:
\batitle{Two-step regularization methods for linear inverse problems}.
\bjtitle{Journal of Numerical Mathematics}
\bvolume{14}(\bissue{6}),
\bfpage{583}--\blpage{607}
(\byear{2006})
\doiurl{10.1515/156939406778474523}
\end{barticle}
\endbibitem

%%% 34
\bibitem[\protect\citeauthoryear{Klann and Ramlau}{2008}]{Klann_2008}
\begin{barticle}
\bauthor{\bsnm{Klann}, \binits{E.}},
\bauthor{\bsnm{Ramlau}, \binits{R.}}:
\batitle{Regularization by fractional filter methods and data smoothing}.
\bjtitle{Inverse Problems}
\bvolume{24}(\bissue{2}),
\bfpage{025018}
(\byear{2008})
\doiurl{10.1088/0266-5611/24/2/025018}
\end{barticle}
\endbibitem

%%% 35
\bibitem[\protect\citeauthoryear{Han et~al.}{2019}]{HAN19}
\begin{barticle}
\bauthor{\bsnm{Han}, \binits{Y.}},
\bauthor{\bsnm{Xiong}, \binits{X.}},
\bauthor{\bsnm{Xue}, \binits{X.}}:
\batitle{{A fractional Landweber method for solving backward time-fractional
  diffusion problem}}.
\bjtitle{Computers \& Mathematics with Applications}
\bvolume{78}(\bissue{1}),
\bfpage{81}--\blpage{91}
(\byear{2019})
\doiurl{10.1016/j.camwa.2019.02.017}
\end{barticle}
\endbibitem

%%% 36
\bibitem[\protect\citeauthoryear{Yang et~al.}{2021}]{Yang21}
\begin{barticle}
\bauthor{\bsnm{Yang}, \binits{F.}},
\bauthor{\bsnm{Pu}, \binits{Q.}},
\bauthor{\bsnm{Li}, \binits{X.}}:
\batitle{{he fractional Landweber method for identifying the space source term
  problem for time-space fractional diffusion equation}}.
\bjtitle{Numer Algor}
\bvolume{87},
\bfpage{1229}--\blpage{1255}
(\byear{2021})
\doiurl{10.1007/s11075-020-01006-4}
\end{barticle}
\endbibitem

%%% 37
\bibitem[\protect\citeauthoryear{Wei et~al.}{2020}]{Wei20}
\begin{barticle}
\bauthor{\bsnm{Wei}, \binits{Y.}},
\bauthor{\bsnm{Kang}, \binits{Y.}},
\bauthor{\bsnm{Yin}, \binits{W.}},
\bauthor{\bsnm{Wang}, \binits{Y.}}:
\batitle{Generalization of the gradient method with fractional order gradient
  direction}.
\bjtitle{Journal of the Franklin Institute}
\bvolume{357}(\bissue{4}),
\bfpage{2514}--\blpage{2532}
(\byear{2020})
\doiurl{10.1016/j.jfranklin.2020.01.008}
\end{barticle}
\endbibitem

%%% 38
\bibitem[\protect\citeauthoryear{Shin et~al.}{2023}]{Shin23}
\begin{barticle}
\bauthor{\bsnm{Shin}, \binits{Y.}},
\bauthor{\bsnm{Darbon}, \binits{J.}},
\bauthor{\bsnm{Karniadakis}, \binits{G.E.}}:
\batitle{Accelerating gradient descent and adam via fractional gradients}.
\bjtitle{Neural Networks}
\bvolume{161},
\bfpage{185}--\blpage{201}
(\byear{2023})
\doiurl{10.1016/j.neunet.2023.01.002}
\end{barticle}
\endbibitem

%%% 39
\bibitem[\protect\citeauthoryear{Elnady et~al.}{2025}]{Elnady25}
\begin{barticle}
\bauthor{\bsnm{Elnady}, \binits{S.M.}},
\bauthor{\bsnm{El-Beltagy}, \binits{M.}},
\bauthor{\bsnm{Radwan}, \binits{A.G.}},
\bauthor{\bsnm{Fouda}, \binits{M.E.}}:
\batitle{A comprehensive survey of fractional gradient descent methods and
  their convergence analysis}.
\bjtitle{Chaos, Solitons \& Fractals}
\bvolume{194},
\bfpage{116154}
(\byear{2025})
\doiurl{10.1016/j.chaos.2025.116154}
\end{barticle}
\endbibitem

%%% 40
\bibitem[\protect\citeauthoryear{Trottenberg
  et~al.}{2000}]{TrottenbergTrottenbergSchuller}
\begin{bbook}
\bauthor{\bsnm{Trottenberg}, \binits{U.}},
\bauthor{\bsnm{Oosterlee}, \binits{C.W.}},
\bauthor{\bsnm{Schüller}, \binits{A.}}:
\bbtitle{{Multigrid}}.
\bpublisher{Elsevier},
\blocation{San Diego, California and others}
(\byear{2000})
\end{bbook}
\endbibitem

%%% 41
\bibitem[\protect\citeauthoryear{Clason and Valkonen}{2020}]{ClasonValkonen}
\begin{botherref}
\oauthor{\bsnm{Clason}, \binits{C.}},
\oauthor{\bsnm{Valkonen}, \binits{T.}}:
Introduction to nonsmooth analysis and optimization.
[arxiv:2001.00216v3]
(2020)
\end{botherref}
\endbibitem

%%% 42
\bibitem[\protect\citeauthoryear{Kaltenbacher et~al.}{2008}]{KalNeuSch08}
\begin{bbook}
\bauthor{\bsnm{Kaltenbacher}, \binits{B.}},
\bauthor{\bsnm{Neubauer}, \binits{A.}},
\bauthor{\bsnm{Scherzer}, \binits{O.}}:
\bbtitle{{ Iterative Regularization Methods for Nonlinear Problems}}.
\bpublisher{de Gruyter},
\blocation{Berlin, New York}
(\byear{2008}).
\bcomment{Radon Series on Computational and Applied Mathematics}
\end{bbook}
\endbibitem

%%% 43
\bibitem[\protect\citeauthoryear{Aarset and Nguyen}{(to appear)}]{AarsetNguyen}
\begin{botherref}
\oauthor{\bsnm{Aarset}, \binits{C.}},
\oauthor{\bsnm{Nguyen}, \binits{T.T.N.}}:
{Bi-level regularization via iterative mesh refinement for aeroacoustic}.
Proc. of Inverse Problems: Modeling and Simulation - Springer Nature
((to appear))
\end{botherref}
\endbibitem

%%% 44
\bibitem[\protect\citeauthoryear{Zeidler}{1988}]{Zeidler4}
\begin{bbook}
\bauthor{\bsnm{Zeidler}, \binits{E.}}:
\bbtitle{{Nonlinear Functional Analysis and Its Applications IV: Applications
  to Mathematical Physics}}.
\bpublisher{Springer},
\blocation{New York, NY}
(\byear{1988}).
\doiurl{10.1007/978-1-4612-4566-7}
\end{bbook}
\endbibitem

%%% 45
\bibitem[\protect\citeauthoryear{Liu et~al.}{2024}]{Liu24}
\begin{barticle}
\bauthor{\bsnm{Liu}, \binits{Z.}},
\bauthor{\bsnm{McBride}, \binits{A.}},
\bauthor{\bsnm{Ghosh}, \binits{A.}},
\bauthor{\bsnm{Heltai}, \binits{L.}},
\bauthor{\bsnm{Huang}, \binits{W.}},
\bauthor{\bsnm{Yu}, \binits{T.}},
\bauthor{\bsnm{Steinmann}, \binits{P.}},
\bauthor{\bsnm{Saxena}, \binits{P.}}:
\batitle{{Computational instability analysis of inflated hyperelastic thin
  shells using subdivision surfaces}}.
\bjtitle{Comput Mech}
\bvolume{73},
\bfpage{257}--\blpage{276}
(\byear{2024})
\doiurl{10.1007/s00466-023-02366-z}
\end{barticle}
\endbibitem

%%% 46
\bibitem[\protect\citeauthoryear{Ciarlet}{1990}]{Ciarlet}
\begin{barticle}
\bauthor{\bsnm{Ciarlet}, \binits{P.G.}}:
\batitle{Mathematical elasticity, volume i: Three-dimensional elasticity}.
\bjtitle{Acta Applicandae Mathematica}
\bvolume{18}(\bissue{2}),
\bfpage{190}--\blpage{195}
(\byear{1990})
\doiurl{10.1007/BF00046568}
\end{barticle}
\endbibitem

%%% 47
\bibitem[\protect\citeauthoryear{Botti and Verzeroli}{2022}]{Lorenzo22}
\begin{barticle}
\bauthor{\bsnm{Botti}, \binits{L.}},
\bauthor{\bsnm{Verzeroli}, \binits{L.}}:
\batitle{{BR2 discontinuous Galerkin methods for finite hyperelastic
  deformations}}.
\bjtitle{Journal of Computational Physics}
\bvolume{463},
\bfpage{111303}
(\byear{2022})
\doiurl{10.1016/j.jcp.2022.111303}
\end{barticle}
\endbibitem

%%% 48
\bibitem[\protect\citeauthoryear{Pao}{1992}]{Pao}
\begin{bbook}
\bauthor{\bsnm{Pao}, \binits{C.V.}}:
\bbtitle{{Nonlinear Parabolic and Elliptic Equations}}.
\bpublisher{Plenum Press},
\blocation{New York and London}
(\byear{1992})
\end{bbook}
\endbibitem

%%% 49
\bibitem[\protect\citeauthoryear{Fisher}{1937}]{Fisher}
\begin{barticle}
\bauthor{\bsnm{Fisher}, \binits{R.}}:
\batitle{The wave of advance of advantageous genes}.
\bjtitle{Ann. Eugenics}
\bvolume{7},
\bfpage{355}--\blpage{369}
(\byear{1937})
\end{barticle}
\endbibitem

%%% 50
\bibitem[\protect\citeauthoryear{Allen and Cahn}{1972}]{AllenCahn}
\begin{barticle}
\bauthor{\bsnm{Allen}, \binits{S.M.}},
\bauthor{\bsnm{Cahn}, \binits{J.W.}}:
\batitle{Ground state structures in ordered binary alloys with second neighbor
  interactions}.
\bjtitle{Acta Metallurgica}
\bvolume{20}(\bissue{3}),
\bfpage{423}--\blpage{433}
(\byear{1972})
\end{barticle}
\endbibitem

%%% 51
\bibitem[\protect\citeauthoryear{Gilding and Kersner}{2004}]{GildingKersner}
\begin{bbook}
\bauthor{\bsnm{Gilding}, \binits{B.H.}},
\bauthor{\bsnm{Kersner}, \binits{R.}}:
\bbtitle{Travelling Waves in Nonlinear Diffusion-Convection Reaction}.
\bpublisher{Springer},
\blocation{Switzerland}
(\byear{2004})
\end{bbook}
\endbibitem

%%% 52
\bibitem[\protect\citeauthoryear{Bronsard and Stoth}{1998}]{BronsardStoh}
\begin{barticle}
\bauthor{\bsnm{Bronsard}, \binits{L.}},
\bauthor{\bsnm{Stoth}, \binits{B.}}:
\batitle{The {G}inzburg-{L}andau equations of superconductivity and the
  one-phase {S}tefan problem}.
\bjtitle{Ann. Inst. Henri Poincar\'e}
\bvolume{15 No. 3},
\bfpage{371}--\blpage{397}
(\byear{1998})
\end{barticle}
\endbibitem

%%% 53
\bibitem[\protect\citeauthoryear{K-H and Q}{2001}]{HoffmannTang}
\begin{bbook}
\bauthor{\bsnm{K-H}, \binits{H.}},
\bauthor{\bsnm{Q}, \binits{T.}}:
\bbtitle{Ginzburg–Landau Phase Transition Theory and Superconductivity}.
\bpublisher{Birkh{\"a}user},
\blocation{Basel}
(\byear{2001})
\end{bbook}
\endbibitem

%%% 54
\bibitem[\protect\citeauthoryear{Frank-Kamenetskii}{1992}]{FrankKamenetskii}
\begin{bbook}
\bauthor{\bsnm{Frank-Kamenetskii}, \binits{D.A.}}:
\bbtitle{19. A Theory of Thermal Flame Propagation},
pp. \bfpage{262}--\blpage{270}.
\bpublisher{Princeton University Press},
\blocation{Princeton}
(\byear{1992}).
\doiurl{10.1515/9781400862979.262} .
\burl{https://doi.org/10.1515/9781400862979.262}
\end{bbook}
\endbibitem

%%% 55
\bibitem[\protect\citeauthoryear{Kippenhahn and Weigert}{1990}]{Kippenhahn}
\begin{bbook}
\bauthor{\bsnm{Kippenhahn}, \binits{R.}},
\bauthor{\bsnm{Weigert}, \binits{A.}}:
\bbtitle{Stellar Structure and Evolution}.
\bpublisher{Springer},
\blocation{Springer Berlin, Heidelberg}
(\byear{1990}).
\doiurl{10.1007/978-3-642-61523-8}
\end{bbook}
\endbibitem

%%% 56
\bibitem[\protect\citeauthoryear{{Michal Bene\v s and Vladim\' ir Chalupeck\' y
  and Karol Mikula}}{2004}]{BENES04}
\begin{barticle}
\bauthor{\bsnm{{Michal Bene\v s and Vladim\' ir Chalupeck\' y and Karol
  Mikula}}}:
\batitle{{Geometrical image segmentation by the Allen–Cahn equation}}.
\bjtitle{Applied Numerical Mathematics}
\bvolume{51}(\bissue{2}),
\bfpage{187}--\blpage{205}
(\byear{2004})
\doiurl{10.1016/j.apnum.2004.05.001}
\end{barticle}
\endbibitem

%%% 57
\bibitem[\protect\citeauthoryear{Li et~al.}{2015}]{LiInpainting01}
\begin{barticle}
\bauthor{\bsnm{Li}, \binits{Y.}},
\bauthor{\bsnm{Jeong}, \binits{D.}},
\bauthor{\bsnm{Choi}, \binits{J.-i.}},
\bauthor{\bsnm{Lee}, \binits{S.}},
\bauthor{\bsnm{Kim}, \binits{J.}}:
\batitle{{Fast local image inpainting based on the Allen–Cahn model}}.
\bjtitle{Digital Signal Processing}
\bvolume{37},
\bfpage{65}--\blpage{74}
(\byear{2015})
\doiurl{10.1016/j.dsp.2014.11.006}
\end{barticle}
\endbibitem

%%% 58
\bibitem[\protect\citeauthoryear{S}{2013}]{Carasso}
\begin{barticle}
\bauthor{\bsnm{S}, \binits{C.A.}}:
\batitle{Hazardous continuation backward in time in nonlinear parabolic
  equations and an experiment in deblurring nonlinearly blurred imagery}.
\bjtitle{J. Res. Natl Inst. Stand. Technol.}
\bvolume{118},
\bfpage{199}--\blpage{217}
(\byear{2013})
\end{barticle}
\endbibitem

%%% 59
\bibitem[\protect\citeauthoryear{Kaltenbacher and Nguyen}{2022}]{KaNg2022}
\begin{barticle}
\bauthor{\bsnm{Kaltenbacher}, \binits{B.}},
\bauthor{\bsnm{Nguyen}, \binits{T.T.N.}}:
\batitle{{Discretization of parameter identification in PDEs using Neural
  Networks}}.
\bjtitle{Inverse Problems}
\bvolume{38}(\bissue{12}),
\bfpage{38}
(\byear{2022})
\end{barticle}
\endbibitem

%%% 60
\bibitem[\protect\citeauthoryear{Kindermann}{2017}]{Kindermann17}
\begin{barticle}
\bauthor{\bsnm{Kindermann}, \binits{S.}}:
\batitle{Convergence of the gradient method for ill-posed problems}.
\bjtitle{Inverse Problems \& Imaging}
\bvolume{11}(\bissue{4}),
\bfpage{703}--\blpage{720}
(\byear{2017})
\end{barticle}
\endbibitem

%%% 61
\bibitem[\protect\citeauthoryear{Hoffmann
  et~al.}{2022}]{HoffmanWaldNguyen:2021}
\begin{barticle}
\bauthor{\bsnm{Hoffmann}, \binits{H.}},
\bauthor{\bsnm{Wald}, \binits{A.}},
\bauthor{\bsnm{Nguyen}, \binits{T.T.N.}}:
\batitle{Parameter identification for elliptic boundary value problems: an
  abstract framework and application}.
\bjtitle{Inverse Problems}
\bvolume{38}(\bissue{7}),
\bfpage{44}
(\byear{2022})
\doiurl{10.1088/1361-6420/ac6d02}
\end{barticle}
\endbibitem

%%% 62
\bibitem[\protect\citeauthoryear{Jiang et~al.}{2021}]{Nakamura:MRE21}
\begin{botherref}
\oauthor{\bsnm{Jiang}, \binits{Y.}},
\oauthor{\bsnm{Nakamura}, \binits{G.}},
\oauthor{\bsnm{Shirota}, \binits{K.}}:
{Levenberg–Marquardt method for solving inverse problem of MRE based on the
  modified stationary Stokes system}.
Inverse Problems
\textbf{37}(12)
(2021)
\end{botherref}
\endbibitem

%%% 63
\bibitem[\protect\citeauthoryear{Hubmer et~al.}{2018}]{HubmerScherzer:TCC18}
\begin{botherref}
\oauthor{\bsnm{Hubmer}, \binits{S.}},
\oauthor{\bsnm{Sherina}, \binits{E.}},
\oauthor{\bsnm{Neubauer}, \binits{A.}},
\oauthor{\bsnm{Scherzer}, \binits{O.}}:
{Lamé Parameter Estimation from Static Displacement Field Measurements in the
  Framework of Nonlinear Inverse Problems}.
SIAM Journal on Imaging Sciences
\textbf{11}(2)
(2018)
\doiurl{10.1137/17M1154461}
\end{botherref}
\endbibitem

%%% 64
\bibitem[\protect\citeauthoryear{Eller and Rieder}{2021}]{Rieder:TCC21}
\begin{botherref}
\oauthor{\bsnm{Eller}, \binits{M.}},
\oauthor{\bsnm{Rieder}, \binits{A.}}:
{Tangential cone condition and Lipschitz stability for the full waveform
  forward operator in the acoustic regime}.
Inverse Problems
\textbf{37}(8)
(2021)
\end{botherref}
\endbibitem

%%% 65
\bibitem[\protect\citeauthoryear{Eller et~al.}{2024}]{EllerRolandRieder24}
\begin{barticle}
\bauthor{\bsnm{Eller}, \binits{M.}},
\bauthor{\bsnm{Griesmaier}, \binits{R.}},
\bauthor{\bsnm{Rieder}, \binits{A.}}:
\batitle{{Tangential Cone Condition for the Full Waveform Forward Operator in
  the Viscoelastic Regime: The Nonlocal Case}}.
\bjtitle{SIAM Journal on Applied Mathematics}
\bvolume{84}(\bissue{2}),
\bfpage{412}--\blpage{432}
(\byear{2024})
\doiurl{10.1137/23M1551845}
{\href{https://arxiv.org/abs/https://doi.org/10.1137/23M1551845}{{https://doi.org/10.1137/23M1551845}}}
\end{barticle}
\endbibitem

%%% 66
\bibitem[\protect\citeauthoryear{Kindermann}{2022}]{Kindermann21}
\begin{barticle}
\bauthor{\bsnm{Kindermann}, \binits{S.}}:
\batitle{On the tangential cone condition for electrical impedance tomography}.
\bjtitle{Electronic Transaction on Numerical Analysis}
\bvolume{57},
\bfpage{17}--\blpage{34}
(\byear{2022})
\end{barticle}
\endbibitem

%%% 67
\bibitem[\protect\citeauthoryear{Scherzer et~al.}{}]{ScherzerHofmannNashed}
\begin{botherref}
\oauthor{\bsnm{Scherzer}, \binits{O.}},
\oauthor{\bsnm{Hofmann}, \binits{B.}},
\oauthor{\bsnm{Nashed}, \binits{Z.}}:
{Newton’s methods for solving linear inverse problems with neural network
  coders}.
arXiv:2303.14058v1[math.FA]
\end{botherref}
\endbibitem

%%% 68
\bibitem[\protect\citeauthoryear{Aarset
  et~al.}{2023}]{holler22learning_parameter_id}
\begin{barticle}
\bauthor{\bsnm{Aarset}, \binits{C.}},
\bauthor{\bsnm{Holler}, \binits{M.}},
\bauthor{\bsnm{Nguyen}, \binits{T.T.N.}}:
\batitle{{Learning-informed parameter identification in nonlinear
  time-dependent PDEs}}.
\bjtitle{Applied Mathematics and Optimization}
\bvolume{88},
\bfpage{53}
(\byear{2023})
\doiurl{10.1007/s00245-023-10044-y}
\end{barticle}
\endbibitem

%%% 69
\bibitem[\protect\citeauthoryear{DuChateau and
  Rundell}{1985}]{DuChateauRundell:1985}
\begin{barticle}
\bauthor{\bsnm{DuChateau}, \binits{P.}},
\bauthor{\bsnm{Rundell}, \binits{W.}}:
\batitle{Unicity in an inverse problem for an unknown reaction term in a
  reaction-diffusion equation}.
\bjtitle{J. Differential Equations}
\bvolume{59}(\bissue{2}),
\bfpage{155}--\blpage{164}
(\byear{1985})
\doiurl{10.1016/0022-0396(85)90152-4}
\end{barticle}
\endbibitem

%%% 70
\bibitem[\protect\citeauthoryear{Pilant and Rundell}{1986}]{PilantRundell:1986}
\begin{barticle}
\bauthor{\bsnm{Pilant}, \binits{M.S.}},
\bauthor{\bsnm{Rundell}, \binits{W.}}:
\batitle{An inverse problem for a nonlinear parabolic equation}.
\bjtitle{Comm. Partial Differential Equations}
\bvolume{11}(\bissue{4}),
\bfpage{445}--\blpage{457}
(\byear{1986})
\doiurl{10.1080/03605308608820430}
\end{barticle}
\endbibitem

%%% 71
\bibitem[\protect\citeauthoryear{Neubauer}{2017}]{Neubauer17}
\begin{barticle}
\bauthor{\bsnm{Neubauer}, \binits{A.}}:
\batitle{{On Nesterov acceleration for Landweber iteration of linear ill-posed
  problems}}.
\bjtitle{Journal of Inverse and Ill-posed Problems}
\bvolume{25}(\bissue{3}),
\bfpage{381}--\blpage{390}
(\byear{2017})
\doiurl{10.1515/jiip-2016-0060}
\end{barticle}
\endbibitem

%%% 72
\bibitem[\protect\citeauthoryear{Bonazzoli et~al.}{2024}]{MarcellaMarcellaVu}
\begin{barticle}
\bauthor{\bsnm{Bonazzoli}, \binits{M.}},
\bauthor{\bsnm{Haddar}, \binits{M.}},
\bauthor{\bsnm{Vu}, \binits{T.-A.}}:
\batitle{On the convergence analysis of one-shot inversion methods}.
\bjtitle{SIAM Journal on Applied Mathematics}
\bvolume{84}(\bissue{6}),
\bfpage{2440}--\blpage{2475}
(\byear{2024})
\doiurl{10.1137/23M1585866}
{\href{https://arxiv.org/abs/https://doi.org/10.1137/23M1585866}{{https://doi.org/10.1137/23M1585866}}}
\end{barticle}
\endbibitem

\end{thebibliography}
%% if required, the content of .bbl file can be included here once bbl is generated
%\input{sequential-bilevel.bbl}

\end{document}